\newcommand{\Eqref}[1]{\eqref{#1}}
\newcommand{\Secref}[1]{Section~\ref{#1}}
\newcommand{\Thmref}[1]{Theorem~\ref{#1}}
\newcommand{\Corref}[1]{Corollary~\ref{#1}}
\newcommand{\Lemref}[1]{Lemma~\ref{#1}}
\newcommand{\Propref}[1]{Prop.~\ref{#1}}
\newcommand{\Defref}[1]{Definition~\ref{#1}}
\newcommand{\Remref}[1]{Remark~\ref{#1}}
\newcommand{\Stepref}[1]{Step~\ref{#1}}
\newcommand{\R}{\mathbb{R}} 
\newcommand{\Z}{\mathbb{Z}} 
\newcommand{\Prob}{\mathbb{P}} 
\newcommand{\C}{\mathbb{C}}
\newcommand{\Ord}{\mathcal{O}} 
\newcommand{\w}{\omega}
\newcommand{\Norm}[2]{\left\lVert{#1}\right\rVert_{#2}} 
\newcommand{\AND}{\textrm{ and }}
\newcommand{\Var}{\textrm{Var}}
\newcommand{\Id}{\mathbf{1}}
\newcommand{\Cv}[1]{\mathcal{C}_{#1}}
\newcommand{\Cs}[1]{\mathcal{D}_{#1}}
\newcommand{\Cw}[1]{\ell_{#1}} 
\newcommand{\I}{{\rm i}}
\newcommand{\EE}{\ensuremath{\mathbb{E}}}
\renewcommand{\Re}{\operatorname{Re}}
\renewcommand{\Im}{\operatorname{Im}}
\DeclareMathOperator{\Ai}{Ai}
\def\arg{\operatorname{arg}}
\newcommand{\Kloggamma}{K_u^{N}} 
\newcommand{\Ksteepestdescent}{K^{N}} 
\newcommand{\Kairy}{K_{\Ai}} 
\newcommand{\zcrit}{z_{\rm crit}}
\newcommand{\Res}{\operatorname{Res}}
\newcommand{\loggamma}{log-gamma}
\newcommand{\expgamma}{exp-gamma}
\newcommand{\partfunc}{Z_{\beta}^N}
\newcommand{\Kbcfv}{K^N_{u,\tau}} 
\newcommand{\E}{\mathbb{E}\,}
\theoremstyle{plain}
\newtheorem{theorem}{Theorem}[section]
\newtheorem*{theorem*}{Theorem}
\newtheorem{lemma}[theorem]{Lemma}
\newtheorem*{lemma*}{Lemma}
\newtheorem{cor}[theorem]{Corollary}
\newtheorem{prop}[theorem]{Proposition}
\newtheorem*{prop*}{Proposition}
\theoremstyle{definition}
\newtheorem{define}[theorem]{Definition}
\newtheorem*{example*}{Example}
\theoremstyle{remark}
\newtheorem{remark}{Remark}
\newtheorem*{remark*}{Remark}
\def\@wraptoccontribs#1#2{}
\def\wt{\xi} 
\def\path{\mathbf{x}} 
\title{Tracy-Widom fluctuations for perturbations of the log-gamma polymer in intermediate disorder}
\date{}
\author{Arjun Krishnan \thanks{Department of Mathematics, University of Rochester.}
\and Jeremy Quastel \thanks{Department of Mathematics, University of Toronto.}}
\renewcommand{\i}{\ell}
\begin{document}
\maketitle

\begin{abstract}
    The free-energy fluctuations of the discrete directed polymer in $1+1$ dimensions is conjecturally in the Tracy-Widom universality class at all finite temperatures and in the intermediate disorder regime. Sepp\"al\"ainen's log-gamma polymer was proven to have GUE Tracy-Widom fluctuations in a restricted temperature range by~\citet{MR3116323}. We remove this restriction, and extend this result into the intermediate disorder regime. This result also identifies the scale of fluctuations of the \loggamma{} polymer in the intermediate disorder regime, and thus verifies a conjecture of \citet{MR3189070}. Using a perturbation argument, we show that any polymer that matches a certain number of moments with the log-gamma polymer also has Tracy-Widom fluctuations in intermediate disorder. 
\end{abstract} 

\tableofcontents

\begin{oldnotes}
Nov 26 2017 
\begin{enumerate}
    \item \sout{I'm looking at \verb|arxiv_and_aap/paper_v1krishnanquastel (1).pdf|. I think I've implemented all of these changes. I missed a few and I fixed those.}
    \item \sout{Should add somewhere that we identify the correct scale of fluctuations in intermediate disorder.}
    \item I looked at \verb|paper_v1krishnanquastel (2).pdf| and I'm looking at \verb|paper_v1krishnanquastel (3).pdf| right now. 
    \item \sout{I'm still trying to fix that argument on page 16, the proof of lemma 4.2}
\end{enumerate}
\end{oldnotes}

\section{Introduction}

In 1999-2000 \citet{MR1682248} and \citet{johansson_shape_2000} proved that the asymptotic fluctuations of the maximal energy (passage-time) in certain point-to-point last-passage problems were governed by the same Tracy-Widom law which arises in the large
$N$ limit of the top eigenvalue of an $N\times N$ matrix from the Gaussian Unitary Ensemble (GUE). It was then conjectured that this holds for very general distributions, and furthermore that it extends to the asymptotic free energy fluctuations of directed polymers in $1+1$ dimensions; i.e., the \emph{positive temperature} case. Here, the free energy takes the form 
\begin{equation}\label{1}
    F(\beta,N) = \log   \sum_{\path} \exp\left({\beta \sum_{i=1}^N  \wt_{\path(i)}}\right),
\end{equation}
where the up-right lattice paths $\path$ go from $(1,1)$ to $(N,N) \in \Z^2$, $\beta > 0$ is the inverse temperature, and the $\{\xi_{i,j}\}_{(i,j) \in \Z^2}$ are independent identically distributed (iid) random variables, collectively referred to as the disorder. 

To date, the only progress that has been made on the positive temperature conjecture is:  1) It has been verified  for the special, exactly-solvable \loggamma{} case \cite{MR2917766} in a certain low temperature range ($\beta > \beta^*$) \cite{MR3116323}, and 2) It has been shown to hold under certain double scaling regimes a) for  long thin rectangles
\cite{auffinger_universality_2012}, and b) in a special case of the intermediate disorder limit \cite{MR3189070}.   

In $1+1$ dimensions, the directed random polymers are in the strong disorder regime for all values of inverse temperature $\beta>0$ \citep{MR2271480}. The intermediate disorder regime means to take $\beta\to 0$ with the length of the polymer to probe the transition:  The more slowly $\beta$ is taken to $0$, the closer one is to the Tracy-Widom GUE asymptotics at fixed $\beta > 0$. The special case where $\beta_N=\hat\beta N^{-1/4}$ was studied in detail in \cite{MR3189070}. This is a double scaling regime involving the Kardar-Parisi-Zhang (KPZ) equation, where the fluctuations crossover from the Gaussian (Edwards-Wilkinson) regime to the Tracy-Widom GUE regime as $\hat\beta \to \infty$. In this article we use a standard perturbation argument (Theorem \ref{thm:perturbation theorem}) which shows the universality of the Tracy-Widom GUE distribution when $1\gg\beta_N\gg\mathcal{O}(N^{-1/4})$. 

If we fix some sequence $\beta_N$ in the last regime, the perturbation theorem says that if two disorder distributions have moments that are sufficiently close up to a certain explicitly identified order, then they have the same asymptotic free energy fluctuations. In principle, one would like to use this to prove the universality of the Tracy-Widom law in intermediate disorder for directed polymers free energies of the form \eqref{1}.  However, the only case in which the Tracy-Widom law is known, the \loggamma{} polymer, is not really of the form \eqref{1}. 

A \loggamma{} random variable is the log of a gamma distributed random variable; i.e., it has the \expgamma{} distribution. The \expgamma{} distributions form a two parameter family corresponding to the scale and shape parameters of the gamma distribution. The scale is a trivial parameter in the directed polymer since it corresponds to centering the weights. The shape parameter $(\theta)$ affects the properties of the \expgamma{}  distribution nonlinearly. However, at least at high temperature $(\theta \to \infty)$, the shape parameter approximately controls the centered moments just like the inverse temperature $\beta$ does in the standard polymer (see \eqref{thetatobeta}). Since the shape/temperature parameter of the \loggamma{} random variable does not appear multiplicatively, the statement (Corollary \ref{cor:perturbation theorem for log-gamma}) is not as simple as it would be if there were a solvable model
of the form \eqref{1}. Nevertheless, the result shows that the \loggamma{} polymer can be significantly perturbed in the intermediate disorder regime without changing the GUE Tracy-Widom fluctuations (see \Remref{rem:loggamma satisfies the moment bound}).

Finally, it turns out that the free energy fluctuations of the \loggamma{} polymer in intermediate disorder is outside of the range of the best available result \cite{MR3116323}, which requires $\beta\ge \beta^*>0$.  Most of the present article is devoted to removing this restriction, caused by the form of the contours employed in the exact formula for the \loggamma{} polymer in \cite{MR3116323}. We start with a different exact formula from \citep{MR3366125} that has more convenient contours, and we thank I.\,Corwin for pointing us towards this paper.  We also thank an anonymous reviewer and I.\,Corwin for comments about a small error in the Theorem from \citep{MR3366125}. Since we rely on this theorem, we sketch a way to fix their oversight in \Remref{rem:the error in the bcfv contour}.
 
 In this way, we obtain the Tracy-Widom GUE law for the point-to-point \loggamma{} polymer for all temperature parameter values and appropriate ``nearby'' distributions.

\section{Main Results}
\label{sec:statement-of-results}
We now describe precisely the discrete random polymer model. The disorder is a random field given by variables $\wt_{i,j}(\beta)$, $i,j\in \{1,2,\ldots\}$ which are independent for each $\beta>0$.  The polymer is represented as an  up-right directed lattice path $\path$ from $(1,1)$ to $(N,N)$. The energy of such a path is given by
\[
H_\beta(\path) = -\sum_{(i,j)\in \path} \wt_{i,j}(\beta).
\]
The partition function is given by
\begin{equation}
    Z_\beta^{N} = \sum_\path e^{-H_\beta(\path)}.
    \label{eq:polymer-partition-function-definition}
\end{equation}
Typically one would have $\wt_{i,j}(\beta)=\beta\wt_{i,j}$, but since we want to also consider the \loggamma{} polymer, we allow for a more complicated dependence on $\beta$.
The limiting free energy is given by 
\[
    F(\beta) = \lim_{N\to \infty} \frac1{N} \log Z^{N}_\beta.
\]
\citet[Proposition 1.4]{MR1939654} proved that the limit exists when the $\wt_{i,j}$ are iid Gaussians. \citet[Proposition 2.5]{MR1996276} extended their result to general iid weights with exponential moments.

The scaled and centered free energy fluctuations are given by
\begin{equation}
  h_N(\beta) := \frac{\log \partfunc - NF(\beta) }{\sigma(\beta) N^{1/3}},
  \label{eq:scaled-log-partition-functions}
\end{equation}
where in general, one expects the right scaling for $\sigma(\beta)$ to be
\begin{equation}
    \sigma(\beta) \approx C\beta^{4/3} \qquad{\rm as}\qquad \beta\searrow 0,
    \label{eq:sigma behavior for small beta or large theta}
\end{equation}
with a constant $C$ depending only on the distribution of the weights $\wt$. This scaling was conjectured in \citep{alberts_intermediate_2010}, and we prove it in this paper for the \loggamma{} and nearby polymers.

We will primarily be interested in the
intermediate disorder regime, in which $\beta$ goes to zero as $N\to \infty$, but $\lim_{N\to\infty} \sigma(\beta)N^{1/3} >0$.  In particular, if
\begin{equation}\label{lbonbeta}
\lim_{N\to\infty} \sigma(\beta)N^{1/3} =\infty,
\end{equation}
we expect the fluctuations to be Tracy-Widom GUE. If $\lim_{N\to\infty} \sigma(\beta)N^{1/3}=0$  the fluctuations are Gaussian, as can be seen by doing a chaos expansion in $\beta$ and checking that only the leading term, linear in the noise, survives. If $\lim_{N\to\infty} \sigma(\beta)N^{1/3} = \hat\beta \in(0,\infty)$, the partition function converges to the solution of the stochastic heat equation. 

In case \eqref{lbonbeta} the limiting fluctuations are supposed to have the Tracy-Widom GUE law in wide generality, but the only case where there are any results is the special \loggamma{} polymer.  Here $e^{-\wt(\beta)} $ have the gamma distribution
(or $-\wt(\beta)$ have the \expgamma{} distribution), which is supported on $x>0$ with density
\begin{equation*}
P(e^{-\wt(\beta)} \in dx) = \frac1{\Gamma(\theta)} x^{\theta -1} e^{-x} dx,
\end{equation*}
where
\begin{equation}\label{thetatobeta}
    \theta = \beta^{-2}.
\end{equation}
We show in \Secref{sec: proof of perturbation theorem by lindeberg} that the $k$\textsuperscript{th} centered moment of the \expgamma{} distribution satisfies  $\E[(\xi(\beta) - \E[\xi(\beta)])^k] = \Theta(\beta^k)$ for all $k \geq 1$ as $\beta \to 0$. Here, $\Theta(\beta^k)$ means that there exist constants $c_1, c_2$ such that the quantity in question is bounded above and below by $c_1 \beta^k$ and $c_2 \beta^k$ respectively for all small enough $\beta$. This mimics the way in which the inverse temperature $\beta$ would enter the standard polymer $\wt(\beta) = \beta\wt$. In other words, choosing \emph{$\beta = \theta^{-1/2}$ in \eqref{thetatobeta} ensures that at high-temperature, $\beta$ plays the role of inverse temperature in the \loggamma{} model.} 

For the \loggamma{} polymer,
\begin{equation}\label{13}
F(\beta) = -2\Psi(\theta/2),\qquad \sigma(\beta) = (-\Psi''(\theta/2))^{1/3},
\end{equation}
where 
\begin{equation}\label{digamma}
    \Psi(\theta) = \frac{\Gamma'(\theta)}{\Gamma(\theta)} 
\end{equation}
is the digamma function. The limiting free-energy was identified in \citep{MR2917766} and the variance in \citep{MR3116323}. 

Our first theorem concerns the fluctuations of the \loggamma{} model.
\begin{theorem}  Let $-\wt_{i,j}(\beta)$ have the \expgamma{} distribution and $\beta_N\to \beta\in [0,\infty)$
such that $\sigma(\beta_N)N^{1/3}\to \infty$.
Then
\begin{equation*}
\lim_{N\nearrow\infty} \Prob(h_N(\beta_N)<r) = F_{\rm GUE}(r)
\end{equation*}
where $h_N$ is the scaled-centered log partition function in~\eqref{eq:scaled-log-partition-functions}, $\Prob$ is the probability of the disorder, and $F_{\rm GUE}$ is the GUE Tracy-Widom distribution.
\label{thm:main-gue-theorem}
\end{theorem}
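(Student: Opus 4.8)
The plan is to read off the limiting distribution from an exact Fredholm determinant formula for the Laplace transform of $\partfunc$, and to run a steepest descent analysis whose contours stay admissible for \emph{every} $\theta>0$ and degenerate in a controlled way as $\theta\to\infty$. Concretely, I would start from the Mellin--Barnes type representation $\EE\big[\exp(-u\,\partfunc)\big]=\det(I-\Kloggamma)$ coming from \citep{MR3366125}, on a contour that can be freely deformed for all $\theta>0$; this is the whole reason for preferring that formula over the one in \cite{MR3116323}, whose nested contours only close up for small $\theta$ (i.e.\ $\beta>\beta^*$), and the small gap in the statement of \citep{MR3366125} is patched in \Remref{rem:the error in the bcfv contour}. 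I would then fix the scaling $u=u_N$ by $-\log u_N = NF(\beta_N)+r\,\sigma(\beta_N)N^{1/3}$, so that $\EE[\exp(-u_N\partfunc)]$ is, up to controllable corrections, a smoothing of $\Prob(h_N(\beta_N)<r)$.

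The analytic core is the asymptotics of $\Kloggamma$. Its kernel is a double contour integral with integrand $e^{N(g(w)-g(z))}$ times an explicit ratio of Gamma functions, where $g$ is built from $\log\Gamma$ and the digamma function $\Psi$ and encodes the free energy $F(\beta)=-2\Psi(\theta/2)$ in \eqref{13}. Because the polymer is square, $g$ has a degenerate critical point $\zcrit=\theta/2$ with $g'(\zcrit)=g''(\zcrit)=0$ and $g'''(\zcrit)\propto\Psi''(\theta/2)\neq 0$, which is what produces the cube-root scaling and the constant $\sigma(\beta)=(-\Psi''(\theta/2))^{1/3}$. I would deform both contours through $\zcrit$ along steepest descent paths, rescale the spectral variable at scale $\sigma(\beta_N)N^{1/3}$ near $\zcrit$, and show that the rescaled kernel converges to the Airy kernel $\Kairy$ on $L^2(r,\infty)$. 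This needs (i) verifying that the contours are admissible and that $\Re\big(g(w)-g(z)\big)$ has the correct sign away from $\zcrit$, yielding Gaussian tail bounds that dominate the Fredholm series and let one pass to the limit term by term; and (ii) doing this \emph{uniformly} along $\beta_N\to\beta\in[0,\infty)$. The case $\beta_N\to 0$ is the subtle one: then $\theta_N=\beta_N^{-2}\to\infty$, $g$ is evaluated at large argument, and the contour geometry degenerates, so one uses the asymptotics $-\Psi''(\theta/2)\sim 8\theta^{-2}$ (hence $\sigma(\beta_N)\asymp\beta_N^{4/3}$, confirming \eqref{eq:sigma behavior for small beta or large theta}) and carries a $\theta_N$-dependent rescaling of the contours through the whole estimate. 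The hypothesis $\sigma(\beta_N)N^{1/3}\to\infty$ is precisely what keeps the Airy asymptotics, rather than the Gaussian or stochastic-heat-equation regimes, alive in the limit.

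Once the rescaled kernels converge in trace norm, convergence of Fredholm determinants gives $\EE[\exp(-u_N\partfunc)]\to\det(I-\Kairy)_{L^2(r,\infty)}=F_{\rm GUE}(r)$. To upgrade this Laplace-transform statement to $\Prob(h_N(\beta_N)<r)\to F_{\rm GUE}(r)$, I would combine (a) a soft a priori tightness bound for the family $\{h_N(\beta_N)\}$, e.g.\ from one-sided concentration for $\log\partfunc$, with (b) the elementary fact that $x\mapsto\exp(-e^{c(x-r)})$ decreases to $\mathbf 1_{x<r}$ as $c\to\infty$ at every continuity point; applying this with $c=\sigma(\beta_N)N^{1/3}$, together with monotonicity of $u\mapsto\EE[e^{-u\partfunc}]$ and continuity of $F_{\rm GUE}$, converts the Laplace-transform limit into the claimed convergence of distribution functions.

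I expect the main obstacle to be part (ii) of the steepest descent: re-engineering the contours of \citep{MR3366125} and the sign and tail estimates so that they hold simultaneously for all $\theta\in(0,\infty)$ and scale correctly as $\theta\to\infty$. This is exactly what removes the $\beta\geq\beta^*$ restriction of \cite{MR3116323} and what extends the result into intermediate disorder, and essentially all of the technical work of the proof concentrates there.
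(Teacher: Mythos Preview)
Your proposal is essentially the paper's proof: start from the BCFV Fredholm determinant formula (with the contour fix of \Remref{rem:the error in the bcfv contour}), choose $u_N$ so that the Laplace transform becomes $\EE[\exp(-e^{\sigma N^{1/3}(h_N-r)})]$, locate the degenerate critical point at $\zcrit=\theta/2$, rescale by $\sigma(\beta_N)N^{1/3}$, and prove the rescaled kernel converges to $\Kairy$ with estimates that are uniform in $\theta$ so that the intermediate disorder regime $\theta_N\to\infty$ is covered.

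A few minor deviations from the paper's execution are worth flagging. First, the paper does not prove trace-norm convergence of kernels; instead it bounds $|\Ksteepestdescent(v,v')|$ pointwise on the explicit wedge contour $\Cv{\pi/4}$ and the vertical line $\Cw{\zcrit}$, applies Hadamard's inequality term by term in the Fredholm series, and passes to the limit by dominated convergence (\Lemref{lem:rigorous hadamard bound on steepest descent contours}); your trace-norm route would work too but is not what is done. Second, the kernel in \Thmref{theorem4.1} carries a finite sum of residue terms coming from the poles of $\sin(\pi(w-v))^{-1}$ when the contours are deformed, and a separate estimate (\Lemref{lem:residues are small}) is needed to show these vanish in the limit and are harmless in the Hadamard bound; this is invisible at the level of your sketch but is where some of the real work sits. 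Third, you do not need a separate tightness input to convert the Laplace-transform limit into convergence of distribution functions: since $F_{\rm GUE}$ is a continuous CDF and $\sigma(\beta_N)N^{1/3}\to\infty$, the standard double-exponential squeezing argument (as in \cite{MR3116323}) gives $\Prob(h_N<r)\to F_{\rm GUE}(r)$ directly.
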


\citet{MR3116323} proved \Thmref{thm:main-gue-theorem} for $\beta\ge \beta^*>0$, where $\beta^*$ is some unidentified but finite number.  Our result removes this restriction, and further extends it into the intermediate disorder regime.

Our next result extends the $\beta_N\searrow 0$ part of this result to ``nearby'' distributions.

\begin{define}[Moment matching condition]
    Two parametrized families of weights $\wt=\wt(\beta)$ and $\tilde\wt= \tilde\wt(\beta)$ are said to \emph{match moments up to order }$k$ if for some $C<\infty $ and for all sufficiently small $\beta$,
    \begin{equation*}
        | \E[  \wt^n ] - \E[ \tilde\wt^n ] | \leq C\beta^{k} \qquad n=1,\ldots,k-1,
    \end{equation*}
   and 
    \begin{equation}\label{kbd}
        | \E[ \wt^k ] |,   | \E[ \tilde\wt^k ]|  \leq C\beta^{k}.
    \end{equation}
    \label{def:moment-matching-condition}
\end{define}

Let $C^k(\R)$ be the space of functions on $\R$ whose derivatives up to order $k$ are uniformly bounded on all of $\R$.

\begin{lemma}
    Suppose two families of weights $\xi(\beta)$ and $\tilde\xi(\beta)$ match moments up to order $k$ (as in Definition \ref{def:moment-matching-condition}) and let $\varphi\in C^k(\R)$. Let $h_N(\beta)$ and $\tilde{h}_N(\beta)$ be the scaled-centered partition functions corresponding to $\xi$ and $\tilde{\xi}$. Then there is a $C<\infty$ such that,
    \begin{equation}\label{17}
        |\E[\varphi(h_N)] - \E[\varphi(\tilde h_N)] | \leq C\frac{N^{2}{\beta^{k}}}{\sigma(\beta)N^{1/3}}.
    \end{equation}
    \label{lem: comparison of partition functions}
\end{lemma}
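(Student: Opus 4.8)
The plan is to run a Lindeberg-type replacement argument: swap the weights $\xi(\beta)$ for $\tilde\xi(\beta)$ one lattice site at a time and control each swap by a Taylor expansion of order $k$ whose terms of order $\le k-1$ cancel because the moments match. The essential deterministic input is that $\log\partfunc$ depends extremely mildly on any single weight. Fix a site $(i,j)$, freeze all the other weights, and view $\log\partfunc$ as a function of the single variable $w=\xi_{i,j}$. Separating paths according to whether they use $(i,j)$ gives $\partfunc=Ae^{w}+B$ with $A,B\ge0$ independent of $w$, so that $\partial_w\log\partfunc=p$, where $p:=Ae^w/(Ae^w+B)\in[0,1]$ is exactly the polymer probability that the path visits $(i,j)$; then $\partial_w^2\log\partfunc=p(1-p)$ and, inductively, $\partial_w^n\log\partfunc$ is a fixed polynomial in $p$, hence $|\partial_w^n\log\partfunc|\le c_n$ for a universal constant $c_n$ and every $n\ge1$. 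Since $NF(\beta)$ does not depend on $w$, the one-variable function $g(w)$ obtained from $h_N$ by freezing the other weights satisfies $|g^{(n)}(w)|\le c_n/(\sigma(\beta)N^{1/3})$ for $n\ge1$; combined with Fa\`a di Bruno and $\varphi\in C^k(\R)$ this upgrades to $\|(\varphi\circ g)^{(n)}\|_\infty\le C_n/(\sigma(\beta)N^{1/3})$ for $1\le n\le k$ whenever $\sigma(\beta)N^{1/3}\ge1$, which is the regime in which \eqref{17} has content.

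Next I would set up the telescope. Enumerate the $N^2$ sites of the box as $1,\dots,M$, $M=N^2$, and let $h_N^{(m)}$ be the scaled-centered free energy built from $\tilde\xi$ at sites $1,\dots,m$ and from $\xi$ at sites $m+1,\dots,M$, so that $h_N^{(0)}=h_N$, $h_N^{(M)}=\tilde h_N$, and $\E[\varphi(h_N)]-\E[\varphi(\tilde h_N)]=\sum_{m=1}^{M}\big(\E[\varphi(h_N^{(m-1)})]-\E[\varphi(h_N^{(m)})]\big)$. For fixed $m$, condition on all weights except the one at site $m$; then $h_N^{(m-1)}=g_m(\xi_m)$ and $h_N^{(m)}=g_m(\tilde\xi_m)$ for the same conditionally deterministic function $g_m$ obeying the bounds above, with $\xi_m$ and $\tilde\xi_m$ independent of the conditioning. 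Taylor-expanding $\varphi\circ g_m$ to order $k-1$ about $0$ with Lagrange remainder and taking expectations over $\xi_m$ and over $\tilde\xi_m$: the order-$0$ term is common to both and cancels; the terms of order $1\le n\le k-1$ contribute $\tfrac{1}{n!}(\varphi\circ g_m)^{(n)}(0)\,(\E[\xi_m^n]-\E[\tilde\xi_m^n])$, which is $O(\beta^k/(\sigma(\beta)N^{1/3}))$ by the derivative bound and \Defref{def:moment-matching-condition}; and the two remainders are at most $\tfrac{1}{k!}\|(\varphi\circ g_m)^{(k)}\|_\infty(\E|\xi_m|^k+\E|\tilde\xi_m|^k)=O(\beta^k/(\sigma(\beta)N^{1/3}))$ by \eqref{kbd} together with the corresponding bound on the $k$-th absolute moments of the weights. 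Summing the $M=N^2$ per-swap estimates gives $|\E[\varphi(h_N)]-\E[\varphi(\tilde h_N)]|\le CN^2\beta^k/(\sigma(\beta)N^{1/3})$.

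The main obstacle is the first step: establishing that $\log\partfunc$ has all of its single-site derivatives bounded by universal constants, so that each differentiation of $h_N$ pays exactly one factor $(\sigma(\beta)N^{1/3})^{-1}$. This rests entirely on the affine-in-$e^w$ structure $\partfunc=Ae^w+B$, which makes $\partial_w\log\partfunc$ a Bernoulli-type quantity whose higher derivatives are polynomials in $p\in[0,1]$. Secondary care is needed in the Fa\`a di Bruno bookkeeping, to retain only a single factor $(\sigma(\beta)N^{1/3})^{-1}$ (legitimate precisely when $\sigma(\beta)N^{1/3}$ is bounded below, the only regime where the bound is informative), and in noting that although the weights need not be centred, only their moments up to order $k$ enter the estimate and all of these are controlled by \Defref{def:moment-matching-condition}.
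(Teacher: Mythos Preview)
Your proposal is correct and follows essentially the same approach as the paper: a Lindeberg telescope over the $N^2$ sites, the decomposition $\partfunc=Ae^w+B$ giving $\partial_w\log\partfunc=p\in[0,1]$ with higher derivatives polynomials in $p$, and Fa\`a di Bruno to pass the $(\sigma N^{1/3})^{-1}$ factor through to $(\varphi\circ g)^{(n)}$. Your write-up is in fact slightly more careful than the paper's in two places---you make the telescope explicit, and you correctly bound the remainder by $\|(\varphi\circ g_m)^{(k)}\|_\infty\,\E|\xi_m|^k$ rather than attempting to factor the expectation---so the only residual caveat is that \eqref{kbd} literally controls $|\E[\xi^k]|$ rather than $\E|\xi|^k$, which is automatic for even $k$ and is a harmless extra hypothesis (satisfied in the application) for odd $k$.
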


Lemma~\ref{lem: comparison of partition functions} is proved in~\Secref{sec: proof of perturbation theorem by lindeberg}, and the following theorem is a consequence of \Lemref{lem: comparison of partition functions} and the fact that weak convergence is equivalent to convergence of expectations of all $C^k(\R)$ functions \citep[Theorem 2.1]{MR1700749}.

\begin{theorem}[Perturbation theorem] Suppose $\wt(\beta)$ and $\tilde\wt(\beta)$ match moments up to order $k$ (as in Definition \ref{def:moment-matching-condition})  and $\beta_N\searrow 0$ with
     \begin{equation}
       \lim_{N\nearrow\infty}  \frac{N^{2}\beta_N^{k}}{\sigma(\beta_N)N^{1/3} } =0.
    \label{eq:condition that determines the number of moments we need to match}
    \end{equation}
    Then
  \begin{equation*}
        \lim_{N \to \infty} \Prob( h_N(\beta_N) \leq r)  =  \lim_{N \to \infty} \Prob( \tilde h_N(\beta_N) \leq r).
    \end{equation*}
    \label{thm:perturbation theorem}
\end{theorem}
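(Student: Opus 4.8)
The plan is to deduce \Thmref{thm:perturbation theorem} directly from \Lemref{lem: comparison of partition functions} together with the standard fact that weak convergence of real random variables is equivalent to convergence of $\E[\varphi(\cdot)]$ for every test function $\varphi\in C^k(\R)$ (cited from \citep{MR1700749}). First I would fix $r\in\R$ at which the limiting law of $\tilde h_N(\beta_N)$ is continuous (one may reduce to this case since the set of discontinuity points is at most countable, and it suffices to check the two limits agree there). The idea is then to sandwich the indicator $\Id_{(-\infty,r]}$ between two smooth functions: for each $\delta>0$ choose $\varphi_\delta^{-},\varphi_\delta^{+}\in C^k(\R)$ with $\varphi_\delta^{-}\le \Id_{(-\infty,r]}\le \varphi_\delta^{+}$, with $\varphi_\delta^{-}\equiv 1$ on $(-\infty,r-\delta]$ and $\varphi_\delta^{+}\equiv 1$ on $(-\infty,r]$, $\varphi_\delta^{+}\equiv 0$ on $[r+\delta,\infty)$, and all derivatives up to order $k$ bounded (uniformly in $N$, though the bound may blow up as $\delta\to0$; that is harmless because $\delta$ is sent to zero only after $N\to\infty$).

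Next I would run the comparison. For any such fixed $\varphi=\varphi_\delta^{\pm}$, \Lemref{lem: comparison of partition functions} gives
\begin{equation*}
  |\E[\varphi(h_N(\beta_N))] - \E[\varphi(\tilde h_N(\beta_N))]| \le C\,\frac{N^{2}\beta_N^{k}}{\sigma(\beta_N)N^{1/3}},
\end{equation*}
and the right-hand side tends to $0$ by hypothesis \eqref{eq:condition that determines the number of moments we need to match}. Hence $\E[\varphi(h_N)]$ and $\E[\varphi(\tilde h_N)]$ have the same $\liminf$ and $\limsup$ as $N\to\infty$, for each of the two test functions. Combining this with the sandwich inequalities yields, for every $\delta>0$,
\begin{equation*}
  \limsup_{N\to\infty}\Prob(h_N(\beta_N)\le r) \le \lim_{N\to\infty}\E[\varphi_\delta^{+}(\tilde h_N(\beta_N))]\ \text{(along subsequences where it exists)},
\end{equation*}
and the symmetric lower bound with $\varphi_\delta^{-}$ and $r-\delta$. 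To make this clean I would instead argue purely on the side of $\tilde h_N$: pass to a subsequence along which $\Prob(h_N(\beta_N)\le r)$ converges, note that along it $\E[\varphi_\delta^{\pm}(\tilde h_N)]$ also converges (same value as the $h_N$ expectation, by the above), then let $N\to\infty$ and afterwards $\delta\to0$, using $\varphi_\delta^{-}\uparrow \Id_{(-\infty,r)}$ and $\varphi_\delta^{+}\downarrow \Id_{(-\infty,r]}$ pointwise to squeeze the limit to $\lim_N \Prob(\tilde h_N(\beta_N)\le r)$, which exists and is continuous at $r$ by the choice of $r$. Since every convergent subsequence has the same limit, the full limit exists and equals $\lim_N \Prob(\tilde h_N(\beta_N)\le r)$.

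Strictly speaking there is a mild circularity to address: the statement posits $\lim_N \Prob(\tilde h_N(\beta_N)\le r)$ exists (equivalently $\tilde h_N$ converges weakly). If one does not want to assume this a priori, the argument still shows $h_N$ and $\tilde h_N$ are asymptotically equivalent in the sense that $\E[\varphi(h_N)]-\E[\varphi(\tilde h_N)]\to0$ for all $\varphi\in C^k(\R)$, hence one converges weakly iff the other does and then to the same limit; that is the content needed for the applications (\Thmref{thm:main-gue-theorem} supplies the weak limit for the \loggamma{} side). The only genuine subtlety — really a bookkeeping point rather than an obstacle — is that the $C^k$-norms of $\varphi_\delta^{\pm}$ degenerate as $\delta\to0$, so the order of limits ($N\to\infty$ first, then $\delta\to0$) must be respected; once that is observed the proof is a routine smoothing argument and I expect no real difficulty. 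The substantive work has already been done in establishing \Lemref{lem: comparison of partition functions}.
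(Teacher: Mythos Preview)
Your proposal is correct and follows essentially the same approach as the paper: invoke \Lemref{lem: comparison of partition functions} and then pass from convergence of $\E[\varphi(\cdot)]$ for $\varphi\in C^k(\R)$ to convergence of distribution functions. The paper compresses the latter step into a one-line citation of the standard fact that weak convergence is equivalent to convergence of expectations of $C^k$ test functions \citep[Theorem 2.1]{MR1700749}, whereas you spell out the smoothing/sandwich argument explicitly; your observation about the order of limits and the remark on the existence of the $\tilde h_N$ limit are accurate elaborations of that citation.
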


We do not expect this perturbation technique to extend to positive temperature. The reason it works here is because the $k$\textsuperscript{th} term of the Taylor expansion of the log-partition function is of order $\beta_N^k$ (see \Eqref{kbd} and \Eqref{eq:lindeberg-replacement-taylor-expansion}), and $\beta_N \to 0$ in intermediate disorder. 

\begin{cor}
Suppose $\wt(\beta)$ and $\tilde\wt(\beta)$ match moments up to order $k$ with $-\tilde\wt(\beta)$ having centered \expgamma{} distribution as above and 
\eqref{eq:condition that determines the number of moments we need to match} holds. 
    Then
  \begin{equation*}
        \lim_{N \to \infty} \Prob(  h_N(\beta_N) \leq r)  = F_{\rm GUE}(r).
    \end{equation*}
    \label{cor:perturbation theorem for log-gamma}
\end{cor}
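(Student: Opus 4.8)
The plan is to obtain \Corref{cor:perturbation theorem for log-gamma} by chaining the perturbation theorem with the log-gamma theorem, using the centered \expgamma{} polymer as the reference model. First I would fix the centered \expgamma{} weights $\tilde\wt(\beta)$ from the hypothesis and write $\tilde h_N$ for the associated scaled-centered log-partition function of \eqref{eq:scaled-log-partition-functions}. Since $\wt$ and $\tilde\wt$ match moments up to order $k$ (\Defref{def:moment-matching-condition}) and \eqref{eq:condition that determines the number of moments we need to match} holds along $\beta_N\searrow 0$, \Thmref{thm:perturbation theorem} immediately gives
\[
\lim_{N\to\infty}\Prob(h_N(\beta_N)\le r)=\lim_{N\to\infty}\Prob(\tilde h_N(\beta_N)\le r),
\]
once the right-hand limit is known to exist. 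So the whole task reduces to identifying that limit with $F_{\rm GUE}(r)$.

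For that step I would invoke \Thmref{thm:main-gue-theorem}. The only mismatch is that the theorem is stated for the uncentered \expgamma{} weights while the corollary uses the centered ones; but recentering the weights by a deterministic constant only shifts $\log\partfunc$ by a multiple of $N$, which is absorbed into $NF(\beta)$ in \eqref{eq:scaled-log-partition-functions}, so the scaled-centered fluctuations $\tilde h_N$ are literally unchanged by centering and \Thmref{thm:main-gue-theorem} applies verbatim. Its hypothesis $\sigma(\beta_N)N^{1/3}\to\infty$ is the standing intermediate-disorder requirement: combining \eqref{13} with \eqref{thetatobeta} and the classical asymptotic $-\Psi''(x)\sim x^{-2}$ as $x\to\infty$ gives $\sigma(\beta)\sim 4^{1/3}\beta^{4/3}$ as $\beta\searrow 0$ — this is \eqref{eq:sigma behavior for small beta or large theta} — so the condition amounts to $\beta_N N^{1/4}\to\infty$. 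Granting it, \Thmref{thm:main-gue-theorem} yields $\lim_{N\to\infty}\Prob(\tilde h_N(\beta_N)<r)=F_{\rm GUE}(r)$, and since $F_{\rm GUE}$ is continuous this is the same statement with $<$ replaced by $\le$. Feeding this into the display above closes the argument.

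I do not expect a genuine obstacle here: the corollary is really assembly, with all the content sitting in \Thmref{thm:main-gue-theorem} (the removal of the $\beta\ge\beta^*$ restriction, the analytically hard part of the paper) and in \Lemref{lem: comparison of partition functions}/\Thmref{thm:perturbation theorem} (the Lindeberg-type comparison), both of which may be assumed. The two points I would still spell out are: (i) that the centered \expgamma{} family actually satisfies the moment bound \eqref{kbd}, i.e.\ $\E[(\tilde\wt(\beta)-\E[\tilde\wt(\beta)])^k]=\Theta(\beta^k)$ as $\beta\searrow 0$ — the computation carried out in the Lindeberg section and recorded in \Remref{rem:loggamma satisfies the moment bound} — so that \Defref{def:moment-matching-condition} may legitimately be applied to the pair $(\wt,\tilde\wt)$; and (ii) the centering and continuity bookkeeping of the preceding paragraph. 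Neither is more than a couple of lines.
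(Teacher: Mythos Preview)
Your proposal is correct and follows essentially the same route as the paper: chain \Thmref{thm:perturbation theorem} with \Thmref{thm:main-gue-theorem}, after checking that the centered \expgamma{} weights satisfy the moment bound \eqref{kbd}. The only mismatch is emphasis: in the paper the entire proof of the corollary is devoted to verifying \eqref{kbd} (and in fact the sharper $\Theta(\beta^k)$ statement) via the cumulant generating function $\log\Gamma(t+\theta)/\Gamma(\theta)$, the asymptotics $\Psi^{(k-1)}(\theta)=\Theta(\theta^{-(k-1)})$, and the moment–cumulant partition expansion, which is more than the ``couple of lines'' you suggest; your reference to \Remref{rem:loggamma satisfies the moment bound} is slightly circular since that remark points back to this very proof.
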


 For example, if $\beta_N= N^{-\alpha}$, $\alpha\in (0,1/4]$, it is sufficient if $\xi(\beta)$ matches moments up to order  
 \begin{equation*}
        k > \frac{5}{3\alpha} + \frac{4}{3}
    \end{equation*}
  with $\tilde{\xi}$. 
  
  \begin{remark}
      It is not obvious that the \expgamma{} distribution satisfies the moment bound in \eqref{kbd}. This is addressed in the proof of \Corref{cor:perturbation theorem for log-gamma}. Given this fact, let $\{X_{i,j}\}$ be a family of independent random variables with $k$ bounded moments.
    Let  $\{ \tilde\xi_{i,j} \}$ be an iid family of centered \expgamma{} random variables with parameter $\theta = \beta^{-2}$ that are independent of the $\{X_{ij}\}$. Then the polymer with weights 
    \begin{equation*}
        \xi_{i,j}  = \tilde\xi_{i,j} (1 + X_{i,j} \beta^{k})
    \end{equation*}
    satisfies the moment matching condition, and hence its log-partition function has asymptotic GUE Tracy-Widom fluctuations when \eqref{17} holds.
      \label{rem:loggamma satisfies the moment bound}
  \end{remark}
  \begin{remark}
      In the standard polymer, it is known when $\alpha=1/4$ that $6$ moments suffice to get the crossover law goverened by the KPZ equation, so the result is slightly suboptimal. Note that our result requires an increasing number of moments to match as $\alpha \searrow 0$, whereas $5$ moments are thought to suffice when $\alpha =0$ \cite{MR2335697}. 
  \end{remark}

\begin{remark}
    The perturbation theorem can clearly be stated in higher dimensions; we only need to modify \Lemref{lem: comparison of partition functions}. However, the critical temperature and hence the intermediate disorder regime is positive at higher temperatures, and our perturbation argument would have to be modified. Therefore we do not pursue these issues here.
\end{remark}


    \Thmref{thm:main-gue-theorem} is proved in \Secref{sec:proof of main tracy widom theorem for loggamma polymer}. \Lemref{lem: comparison of partition functions} and \Corref{cor:perturbation theorem for log-gamma} are both proved in \Secref{sec: proof of perturbation theorem by lindeberg}. The appendix contains the proof of the Fredholm determinant formula we use in \Secref{sec:proof of main tracy widom theorem for loggamma polymer}, and fixes a small error in \citet[Theorem 2.1]{MR3366125}.
\section{Proof of the perturbation theorem}
\label{sec: proof of perturbation theorem by lindeberg}

The Lindeberg proof of the central limit theorem is now a standard argument for proving universality~\citep{MR1544569,Lindeberg1920}. Let $f$ be a function on $\R^n$ and consider two sets of iid random variables $\xi = (\xi_1,\ldots,\xi_n)$ and $\tilde{\xi} = (\tilde{\xi}_1,\ldots,\tilde{\xi}_n)$ that match some number of moments. For any bounded smooth function $\varphi$, the Lindeberg strategy allows one to show $|\E[\varphi \circ f(\xi) ] - \E[\varphi \circ f(\tilde{\xi}) ]| = o(n)$ for all smooth and bounded functions $\varphi$. This is shown by replacing the $\xi$ variables one by one with $\tilde{\xi}$ variables, and using Taylor expansion to control the error. This estimate controls the weak-$*$ distance between $f(\tilde{\xi})$ and $f(\xi)$ and thus shows that they converge to the same distributional limit if it exists for either one of them. The technique has been applied to show, for example, that the limiting free energy of the Sherrington-Kirkpatrick (SK) spin glass, and the semi-circle distribution in Wigner random matrices are \emph{universal}; i.e., they are independent of the distributions of the variables involved \citep{chatterjee_simple_2005}.

There is another related technique in spin glass theory called Guerra's interpolation method that also relies on Taylor expansion. It uses the Ornstein-Uhlenbeck process to interpolate between a vector of iid random variables $\tilde{\xi}$ and an independent iid \emph{Gaussian} vector $\xi$. In the SK model, the partition function is of the form $Z = \sum_{\sigma \in \{-1,1\}^N} \exp(-\beta_N H(\sigma))$ where $H(\sigma) = - \sum_{ij} \xi_{ij} \sigma_i \sigma_j$ and $\beta_N = N^{-1/2}$. Again, $N^{-1} \log Z$ has a deterministic limit called the free-energy as $N \to \infty$. For iid Gaussian weights, the limit was shown to be given by the celebrated Parisi formula by \citet{MR2195134}. The limit was shown to be the same for all iid families of symmetric random variables with four moments by \citet{MR1930572}. They used the aforementioned interpolation technique and the so-called approximate integration by parts for weights that match the moments of a Gaussian up to some order. Their ideas were extended by \citet{MR1939654} to include distributions that match two moments with the Gaussian and have finite third moment. Chatterjee used  truncation and the Lindeberg technique to remove the finite third moment requirement \citep{chatterjee_simple_2005}. Other results like \citep{auffinger_universality_2014} and \citep{MR3058201} extend the interpolation technique using higher order Taylor expansions. 

In particular, \citet{auffinger_universality_2014} showed that the \emph{Gibbs measure} ---the random measure on configurations given by $P_{\xi}(\sigma) = Z^{-1} \exp( \beta_N H(\sigma) )$--- also converged to a universal limit as long as the weights matched a certain number of moments with the Gaussian. Since their results are for general spin-systems and not just for the SK model, they also apply to polymer models in intermediate disorder.  In a personal communication, \cite{auffinger_universality_2014_polymers} applied the theorems in \citep{auffinger_universality_2014} to show that the limiting Gibbs measure associated with the polymer path is universal: 
Let $\beta_N = \beta N^{-\alpha}$, and let $\gamma = (\gamma_i)_{i=1}^{Nd}$ be a directed path from the origin to $N(1,\ldots,1) \in  \Z^d$, where $\gamma_i$ are the vertices along path. Suppose the weights $\tilde{\xi}$ in the polymer match the first $k$ moments of the standard Gaussian such that
\[
    \E[ \tilde{\xi}^{k+1} ] < \infty \quad k > \frac{d+1}{\alpha}.
\]
For $n \in \mathbb{N}$, let $\gamma^a$, $a=1,\ldots,n$ be any $n$ directed paths from the origin to $N(1,\ldots,1)$. 
\begin{theorem}[Auffinger \citep{auffinger_universality_2014_polymers}]
    Let $L$ be a function depending on $n$ paths $(\gamma^a)_{1\leq a \leq n}$ where $n$ is fixed, and suppose $\Norm{L}{\infty} \leq 1$. 
    Then,
    \[
        |\E_{\tilde{\xi}}\left[ \langle L(\gamma) \rangle \right] - \E_{\xi}\left[ \langle L(\gamma) \rangle \right]| \to 0 \quad \text{as } N \to \infty.
    \]
    Here, $ \langle \cdot \rangle$ represents the average over the Gibbs measure on paths, and $\E_{\cdot}$ represents expectation over the corresponding set of weights. 
\end{theorem}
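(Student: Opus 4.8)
The plan is to obtain this as a direct consequence of the general Lindeberg-type universality theorem for spin systems of \citet{auffinger_universality_2014}: the only real work is to recast the quenched $n$-replica Gibbs average as a bounded, smooth functional of the iid disorder with the right control on its derivatives, after which the conclusion is immediate. Writing $\Lambda_N$ for the set of sites of the box $\{0,\ldots,N\}^d$ and $B=\sum_{\gamma}e^{\beta_N\sum_i \xi_{\gamma(i)}}$ for the single-path partition function, the product over replicas factorizes, so
\[
  \langle L(\gamma^1,\ldots,\gamma^n)\rangle
  \;=\; \frac{\sum_{\gamma^1,\ldots,\gamma^n} L(\gamma^1,\ldots,\gamma^n)\,\prod_{a=1}^n e^{\beta_N\sum_i \xi_{\gamma^a(i)}}}{B^{\,n}}
  \;=:\; G\bigl((\xi_v)_{v\in\Lambda_N}\bigr),
\]
with $\Norm{G}{\infty}\le\Norm{L}{\infty}\le 1$ for every realization of the weights. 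The analytic input I need is the uniform bound $\bigl|\partial_{\xi_v}^{\,m}G\bigr|\le C(n,m)\,\beta_N^{\,m}$ for $0\le m\le k+1$ and all $v\in\Lambda_N$.

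To prove that bound, note that differentiating $B$, or the numerator of $G$, in a single weight $\xi_v$ brings down a factor $\beta_N$ times the number of visits the relevant path(s) make to $v$; for directed paths each replica visits $v$ at most once, so those counts are bounded by $n$. Moreover $G=A/B^n$ is self-normalizing: $|A|\le\Norm{L}{\infty}B^n$ and each $\partial_{\xi_v}^{\,\ell}B/B$ is bounded by $\beta_N^{\,\ell}$, so expanding $\partial_{\xi_v}^{\,m}(A\cdot B^{-n})$ by the Leibniz rule (together with the chain rule applied to $B^{-n}$) yields only terms in which a full copy of $B^{-n}$ is absorbed by the $B^n$ from the bound on the numerator; the partition function never enters the final estimate, and one is left with $C(n,m)\beta_N^{\,m}$ for an explicit $C(n,m)$.

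The remaining step is the Lindeberg replacement. Fix an ordering of $\Lambda_N$; for $0\le j\le|\Lambda_N|$ let $\zeta^{(j)}$ be the disorder vector whose first $j$ coordinates come from $\tilde\xi$ and the rest from the Gaussian $\xi$, so that $\E_{\tilde\xi}[\langle L\rangle]-\E_{\xi}[\langle L\rangle]=\sum_j\bigl(\E[G(\zeta^{(j)})]-\E[G(\zeta^{(j-1)})]\bigr)$. The $j$-th summand compares two vectors differing only at the $j$-th site $v$; Taylor-expanding $G$ in that one coordinate to order $k$ around $0$ and taking expectations, the terms of degree $\le k$ cancel because $\tilde\xi$ matches the first $k$ moments of the standard Gaussian $\xi$ and the coefficients $\E[\partial_{\xi_v}^{\,m}G|_{\xi_v=0}]$ are independent of $\xi_v$, while the remainder is at most $\tfrac{C(n,k)}{(k+1)!}\beta_N^{\,k+1}\bigl(\E|\xi_v|^{k+1}+\E|\tilde\xi_v|^{k+1}\bigr)\le C'(n,k)\beta_N^{\,k+1}$, using $\E[\tilde\xi^{\,k+1}]<\infty$. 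Summing over the $\lesssim N^{d}$ sites,
\[
  \bigl|\E_{\tilde\xi}[\langle L\rangle]-\E_{\xi}[\langle L\rangle]\bigr|\;\le\;C''(n,k)\,N^{d}\beta_N^{\,k+1}\;=\;C''(n,k)\,\beta^{\,k+1}N^{\,d-\alpha(k+1)}\;\longrightarrow\;0,
\]
since $\alpha(k+1)>d$; the hypothesis $k>(d+1)/\alpha$ is in fact more than enough, the slack absorbing the crude site count above and the precise formulation of the theorem in \citep{auffinger_universality_2014}.

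The main obstacle is the uniform higher-derivative estimate on $G$: one must bound derivatives of the ratio $A/B^n$ with no lower bound available on the genuinely random, possibly small partition function $B$. The self-normalized structure is exactly what makes this work, as indicated above, but writing out the combinatorics and tracking the dependence of $C(n,m)$ on $n$ and $m$ is the one place requiring care; the rest is routine Lindeberg bookkeeping. Accordingly, the cleanest exposition is to check that $G$ satisfies the hypotheses of the universality theorem of \citet{auffinger_universality_2014} and then invoke it directly.
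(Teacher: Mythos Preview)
The paper does not supply its own proof of this statement: it is quoted as a result communicated by Auffinger, obtained by applying the general Lindeberg-type universality theorems of \citet{auffinger_universality_2014} to the polymer Gibbs measure. Your proposal carries out precisely that program---writing the $n$-replica Gibbs average as a self-normalizing functional $G=A/B^n$, establishing the uniform bound $|\partial_{\xi_v}^{\,m}G|\le C(n,m)\beta_N^{\,m}$ via the fact that each directed path visits a site at most once, and then running the site-by-site Lindeberg replacement---and the argument is correct, with the derivative estimate handled in the same spirit as the paper's own proof of \Lemref{lem: comparison of partition functions}.
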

This allowed him to show that various quantities of interest were universal, including the transversal fluctuation exponent of the path measure that is defined as follows: Let $\gamma_{N/2}$ be the midpoint of a path $\gamma$ sampled from the Gibbs measure. The polymer has transversal fluctuation exponent $\alpha$ if for any $\alpha' < \alpha < \alpha''$ and $C > 0$,
\begin{multline}
    \Prob_{\tilde{\xi}} \left( |\gamma_{N/2} - N/2(1,\ldots,1)| \leq C N^{\alpha'} \right) \to 0 , \\ \AND  
    \Prob_{\tilde{\xi}} \left( |\gamma_{N/2} -N/2(1,\ldots,1)|  \leq C N^{\alpha''} \right) \to 1 \quad \text{as} \quad N \to \infty.
\end{multline}
The transversal exponent has a relationship with $\chi$, the fluctuation exponent of $\Var \left( \log Z^N(\beta) \right) \sim N^{2 \chi}$. Under certain strong hypotheses on the \emph{existence} of these exponents (they are not known to exist for any standard polymer) it is known that $\chi = 2\alpha - 1$ \citep{chatterjee_universal_2011,MR3141825}. Therefore, \citep{auffinger_universality_2014_polymers} also indicates that polymers ought to have the same fluctuation exponents as the Gaussian polymer under a moment matching condition. We prove that $\chi$ is universal for the \loggamma{} and nearby polymers, verifying a conjecture of \citep{alberts_intermediate_2010} in this special case.

In our setting, there are a few problems with Guerra's interpolation technique. \emph{In its current form, it only allows one to match moments with the Gaussian}. Hence one can only look at polymers where the weights are of the form $\xi_{i,j}(\beta) = \beta \xi_{i,j}$ and only match moments with the Gaussian distribution. This by itself is not a very serious shortcoming and may be overcome with some work. 

But for the Gaussian polymer, it is not known whether the fluctuation exponents discussed above exist, and whether the fluctuations of $\log Z^N(\beta)$ are in the GUE Tracy-Widom universality class. In fact, very little is known about the Gaussian polymer other than the fact that the limiting free energy exists. 

On the other hand, a lot more is known about the log-gamma polymer. The scale of the variance \citep{MR2917766}, and the limiting fluctuations are known in a large parameter range \citep{MR3116323}. Moreover, the Lindeberg replacement technique is well-suited to comparing other polymers with the \loggamma{} polymer. Since we need more terms in the Taylor expansion (than \citep{chatterjee_simple_2005}, for example), and since Guerra's Gaussian integration by parts technique does not apply directly to prove~\Lemref{lem: comparison of partition functions}, we reproduce the fairly standard Lindeberg argument for completeness. 

\begin{proof}[Proof of \Lemref{lem: comparison of partition functions}]
  For a fixed vertex $x = (i,j)$, define
    \begin{equation}\label{zwhy}
    Z(y) = Z_{x^c} + Z_x e^y,
    \end{equation}
    where $Z_{x^c}$ represents the sum in~\Eqref{eq:polymer-partition-function-definition} over paths that do not pass through $x$, and $Z_x$ is the sum over paths that do pass through $x$, but do not include weight at $x$. Let $h(y) = N^{-1/3}\sigma^{-1} (\log Z(y) - NF)$.  $Z(y)$ and $h(y)$ do indeed depend on the other weights $\xi_z$ for $z \neq x$, but the dependence is suppressed in the notation because we want to isolate the effect of replacing $\wt_x$ by $\tilde\wt_x$. For any function $\varphi \in C^k$ we will show that
    \begin{equation}
        |\E[\varphi(h(\wt_x))] - \E[ \varphi(h(\tilde\wt_x)) ] | \leq C(\sigma N^{1/3})^{-1}\beta^k
        \label{eq:single-lindeberg-replacement-in-polymer}
    \end{equation}
    where the expectation is over the disorder. We obtain \eqref{17} by replacing $\wt_{x}$ by $\tilde\wt_x$, $N^2$ times for each $x \in \{1,\ldots,N\}^2$.

    Fix all the other weights in the disorder, and write Taylor's theorem for $\varphi(h(\wt_x))$:
    \begin{equation*}
      \varphi(h(\wt_x)) = \sum_{j=0}^{k-1} \frac{\partial_y^j\varphi(h(0))}{j!}  \wt_x^j +  \frac{\partial_y^k\varphi(h(\zeta))}{k!} \wt_x^k,
    \end{equation*}
    with $\zeta$ between $0$ and  $\wt_x$. Taking expectation and using the independence of $\{\wt_{z}\}_{z \in \R^2}$, we get
    \begin{equation}
        \E[  \varphi(h(\wt_x)) ] = \sum_{j=0}^{k-1} \frac{a_j}{j!}  \E[\wt_x^j] + \frac{a_k}{k!} \E[ \wt_x^k],
        \label{eq:lindeberg-replacement-taylor-expansion}
    \end{equation}
    where $a_j = \E[\partial_y^j\varphi(h(0))]$, $j=1,\ldots, k-1$ and $a_k=\E[\partial_y^k\varphi(h(\zeta))]$.
    One has an analogous expression for $ \E[  \varphi(h(\tilde\wt_x)) ]$, but note that in fact $a_j=\tilde a_j$ for
    $j=1,\ldots, k-1$ since they both do not depend on $\xi_x$ and $\tilde\xi_x$, and all the other weights they depend on are the same. Hence, from the moment matching condition \eqref{kbd},
   \begin{equation*}
        | \E[  \varphi(h(\wt_x)) ] -  \E[  \varphi(h(\tilde\wt_x)) ]|\le  \left(\sum_{j < k} |a_j| + (|a_k|+ |\tilde a_k|) \right) C \beta^k.
    \end{equation*}
       To control the error term, we will show that for any $k \geq 1$ and all $y\in \R$,
    \begin{equation}
        |\partial_y^k\varphi(h(y))| \leq C_{k,\varphi} (\sigma N^{1/3})^{-1},
        \label{eq:lindeberg-expansion-derivative-bound-for-composition}
    \end{equation}
    where $C_{k,\varphi}$ is a constant dependent only on $\varphi$, $k$ and the constant from the moment matching condition. The estimates \eqref{eq:lindeberg-replacement-taylor-expansion}, \eqref{eq:lindeberg-expansion-derivative-bound-for-composition},  and the moment matching condition in~\Defref{def:moment-matching-condition} together imply \eqref{eq:single-lindeberg-replacement-in-polymer}.

To prove \eqref{eq:lindeberg-expansion-derivative-bound-for-composition}, we expand the derivative of a composition (\`a la Faa di Bruno)
\begin{equation*}
\partial^k\varphi(h) = \sum_{\sum s m_s = k} C_{m_1\ldots m_k} \partial^{\sum m_s}\varphi \prod_{r=1}^k (\partial^r h)^{m_r},
\end{equation*}
where the $C_{m_1\ldots m_k}$ are multinomial coefficients, and $m_s \geq 0$ for $s=1,\ldots,k$. Since $\varphi$ is smooth with bounded derivatives up to order $k$, we only need to control $\partial^rh(0)$ for $r\ge 1$.
Computing derivatives in  \eqref{zwhy},
\begin{equation*}
    \begin{split}
        \partial_y \log Z(y) & = \frac{Z_x e^y }{Z_{x^c} + Z_x e^y} =: p(y),\\
        \partial_y^i \log Z(y) & = \mathcal{P}_i(p(y)), \quad i > 1,
    \end{split}
\end{equation*}
where $\mathcal{P}_i$ is the polynomial given by the recurrence
\begin{equation*}
    \mathcal{P}_{i+1}(p) = \mathcal{P}_{i}'(p)\,p(1-p), \quad \mathcal{P}_1(p) = p, \quad i \geq 1.
\end{equation*}
The recurrence follows from the chain rule and $p'(y) = p(y)(1-p(y))$. Since $0 \leq p(y) \leq 1$ for all $y \in \R$, we can bound each of the polynomials $\mathcal{P}_i$ by constants for $i=1,\ldots,k$. Putting the last few observations together, we get~\Eqref{eq:lindeberg-expansion-derivative-bound-for-composition} for $k \geq 1$.
\end{proof}

\begin{proof}[Proof of \Corref{cor:perturbation theorem for log-gamma}]
    \Thmref{thm:main-gue-theorem} shows that if the $-\tilde{\wt}(\beta)$ are \expgamma{} random variables, and $\beta_N \to \infty$ such that \eqref{eq:condition that determines the number of moments we need to match} holds, then
    \[
        \lim_{N \to \infty} \Prob(  h_N \leq r)  = F_{\rm GUE}(r).
    \]
    We only need to show that $-\tilde{\wt}(\beta)$ satisfies the moment bound in \eqref{kbd} (see \Remref{rem:loggamma satisfies the moment bound}), which says that for $\theta = \beta^{-2}$
    \begin{equation}
        |\E[ (\tilde{\wt}(\beta) - \E\tilde{\wt}(\beta))^k ]| \leq C_k \frac1{\theta^{\lceil k/2 \rceil}},
        \label{eq:bound for central moments of log gamma}
    \end{equation}
    for some constant $C_k$. We will in fact show that for all $k > 1$,
    \begin{equation}
        \E[ (\tilde{\wt}(\beta) - \E\tilde{\wt}(\beta))^k ] = \Theta\left( \frac1{\lceil \theta^{ k/2 } \rceil} \right), \quad \theta \to \infty.
        \label{eq:bound for central moments of log gamma identifing correct order}
    \end{equation}
    
    The cumulant generating function of the \expgamma{} distribution is given by
\begin{equation*}
    \log \E[ \exp( t X ) ] = \log\left( \frac{\Gamma(t + \theta)}{\Gamma(\theta)} \right),
\end{equation*}
where $X$ is a gamma distributed random variable.
Differentiating this $k$ times with respect to $t$, we see that the $k$\textsuperscript{th} cumulant of the \expgamma{} distribution is given by $\Psi^{(k-1)}(\theta)$, the $k-1$\textsuperscript{th} derivative of the digamma function \Eqref{digamma}.  The digamma function can be written as \citep[6.3.16]{MR0167642}
\begin{equation}
    \label{eq:series representation for digamma function}\Psi(z)  = - \gamma_{EM} + \sum_{n=0}^{\infty} \left(\frac{1}{n+1} - \frac{1}{n+z}\right),
\end{equation}
where $\gamma_{EM}$ is the Euler-Mascheroni constant.  Note that the series is absolutely convergent when $z$ is bounded away from the nonpositive integers. It follows that 
    \begin{equation*}
        \kappa_k := \Psi^{(k-1)}(\theta) = \Theta \left( \frac1{\theta^{k-1}} \right) \quad k > 1, \quad \theta \to \infty.
    \end{equation*}
    For any random variable, the moments $\mu_n$ are related to the cumulants $\kappa_n$ via the following combinatorial expansion (see \citep{MR725217} for this formula and its famous M\"obius inversion). 
    If $\pi$ is a (set) partition of $\{1,\ldots,k\}$, then we represent $\pi$ as the union of disjoint sets $\pi = \{B_i\}_{i=1}^{n(\pi)}$ where $B_i \subset \{1,\ldots,k\}$, and $\cup B_i = \{1,\ldots,k\}$. Then, 
    \[
        \mu_k = \sum_{\pi \in \mathcal{L}} \prod_{B \in \pi} \kappa_{|B|}  
    \]
    where $|B|$ represents the cardinality of the set $B$, and $\mathcal{L}$ is the set of all partitions of $\{1,\ldots,k\}$. Since we are considering centered random variables, we simply set the first cumulant to zero ($\kappa_1 = 0$) and therefore get
    \begin{align}
        |\mu_{k} |
        & = \left| \sum_{\pi \in \mathcal{L}} \prod_{i=1}^{n(\pi)} \kappa_{|B_i|} 1_{|B_i| \neq 1} \right| \nonumber \\ 
        & \leq \sum_{\pi \in \mathcal{L}} \prod_{i=1}^{n(\pi)} \frac{C_{|B_i|}}{\theta^{|B_i|-1}} 1_{|B_i| \neq 1}, \nonumber \\
        & = \sum_{\pi \in \mathcal{L}} \frac{C'_{\pi}}{\theta^{\sum_{i=1}^{n(\pi)}|B_i| - n(\pi)}} 1_{|B_i| \neq 1}, \nonumber  \\
        & \leq \frac{C_k''}{\theta^{k - \max_{\pi, |B_i| \neq 1} n(\pi)}} \label{eq:where we take the max over a set of partitions in the cumulant calculation} \\
        & = \frac{C_k''}{\theta^{\lceil k/2 \rceil}}, \nonumber
    \end{align}
    where $C'_{\pi} = \prod_{i=1}^{n(\pi)} C_{|B_i|}$ and $C''_k = 2^k \max_\pi C'_{\pi}$. In \eqref{eq:where we take the max over a set of partitions in the cumulant calculation}, we used the following observation: when $\pi$ contains no sets with only one element, it follows that $n(\pi) \leq \lfloor k/2 \rfloor$. This proves \eqref{eq:bound for central moments of log gamma}.
    
    When $k$ is even, the leading order term for $\mu_k$ comes from partitions whose $B_i$ have exactly $2$ elements for all $i$. If there is even one $B_i$ with more than $2$ elements, then $n(\pi) \leq k - 2$. Thus, all the other partitions result in terms that have strictly smaller order as $\theta \to \infty$. Thus, we get $\mu_k \geq c_k\theta^{ -k/2 }$. A similar argument for $k$ odd applies; here partitions that have one $|B_i|=3$ and all the rest having two elements provide the leading order term. This proves \Eqref{eq:bound for central moments of log gamma identifing correct order}. 
\end{proof}

\section{Tracy-Widom fluctuations for the \loggamma{} polymer}
\label{sec:proof of main tracy widom theorem for loggamma polymer}

In this section, we prove \Thmref{thm:main-gue-theorem}. We begin with the Fredholm determinant formula for the Laplace transform of the partition function.

\subsection{Fredholm determinant formula}
\begin{theorem}\label{theorem4.1}
    For $N \geq 9$, let $\partfunc$ be the partition function of the \loggamma{} polymer with $\theta = \beta^{-2}$.  Then for $\Re(u)>0$, 
    \begin{equation}\label{32}
        \E[e^{-u \partfunc}] = \det(I + \Kloggamma)_{L^2(\Cv{\varphi} )},
    \end{equation}
    where
    \begin{equation}
    \begin{aligned}
        \Kloggamma(v,v')
        & = \frac{1}{2\pi \I} \int_{l_{\zcrit + \delta}} - \frac{\pi}{\sin(\pi (w-v))} \left( \frac{\Gamma(v)}{\Gamma(w)} \frac{\Gamma(\theta - w)}{\Gamma(\theta - v)} \right)^N \frac{u^{w-v} }{w -v'}~dw 
        + \sum_{i=1}^{q(v)} \Res_{u,i}(v,v'),
         \end{aligned}
    \label{eq:BCFV kernel with tau equals 0}
\end{equation}
\begin{equation}
    \Res_{u,j}(v,v') = (-1)^j \left( \frac{\Gamma(v)}{\Gamma(v+j)} \frac{\Gamma(\theta - v - j)}{\Gamma(\theta - v)} \right)^N \frac{u^{j} }{v + j -v'}, \quad 1 \leq j \leq q(v),
    \label{eq:general residues of the sine in our fredholm determinant formula}
\end{equation}
and
\begin{equation}
    q(v) = \lfloor \zcrit + \delta - \Re(v) \rfloor, \quad
    \zcrit ={\theta}/{2},  \quad 0 < \delta \leq \frac{\zcrit}{2}.
    \label{eq:number of residues and zcrit definition}
\end{equation}
The contours are defined as follows: For any $\varphi\in (0,\pi/4]$, the $\Cv{\varphi}$ contour is given by
$\{\zcrit+e^{\I (\pi+\varphi)}y\}_{y\in \R^+}\cup \{\zcrit+e^{\I(\pi-\varphi)}y\}_{y\in \R^+}$, where $\R^+$ is the set of non-negative reals. The $\Cw{x}$ contour is a vertical straight-line with real part $x$ (see Figure \ref{fig}). Both $\Cw{x}$ and $\Cv{\varphi}$ are oriented to have increasing imaginary parts. 
    \label{thm:our version of BCFV formula with tau equals 0}
    \oldnotes{I'd originally had $\varphi\in (\operatorname{arccot}(2),\pi/4]$ since I thought it appeared in~\Propref{prop:our estimate for the BCFV kernel for the Hadamard bound}. But I can't seem to track down where exactly it was needed, and so I'm going to drop it.}
\end{theorem}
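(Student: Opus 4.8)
The plan is to derive the formula from the Fredholm determinant representation of \citet{MR3366125} for the Laplace transform $\E[e^{-u\partfunc}]$, specialized to the point-to-point, zero-boundary-parameter case (their $\tau=0$), and then to perform two contour deformations that install the contours $\Cv{\varphi}$ and $\ell_{\zcrit+\delta}$ used here. Two preliminary remarks. First, \citet[Theorem 2.1]{MR3366125} as published mis-specifies one of its contours, so the argument must begin by recording the corrected statement — this is the content of \Remref{rem:the error in the bcfv contour} — and take that corrected formula as the starting point. Second, everything after that is a pure complex-analysis computation inside a Fredholm determinant (contour deformation and residue calculus): no probability enters, and the work lies in identifying which poles are crossed and in the Stirling-type decay estimates that make each deformation admissible.

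\textbf{Step 1: deform the outer contour onto $\Cv{\varphi}$.} In the BCFV formula the $L^2$-contour is a cone $\Gamma_0$ with apex on the positive real axis in $(0,\theta)$, and the inner $w$-contour is a parallel cone wedged strictly between the ``$v$-cone'' and the ``$(v{+}1)$-cone'' (so $0<\Re(w-v)<1$, keeping the poles of $\frac{\pi}{\sin(\pi(w-v))}$ at $w\in v+\Zgzero$ and the pole of $\frac{1}{w-v'}$ off the contours). Deforming $\Gamma_0$ to the wedge $\Cv{\varphi}$, carrying the $w$-cone along so that it stays wedged between, is a benign change of opening angle: the swept region is two thin slivers, which one may arrange to meet the real axis only at the apices, so they avoid the only $v$-singularities of the BCFV kernel — the poles of $\Gamma(v)^N$ at $v\in\Zleqzero$ (the factor $1/\Gamma(\theta-v)^N$ only contributes zeros). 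Invariance of the Fredholm determinant under this deformation requires a uniform Hilbert--Schmidt (hence trace) bound, which comes from Stirling: via the reflection formula $\Gamma(v)=\pi/(\sin(\pi v)\,\Gamma(1-v))$, and using $\Im v\neq0$ off the apex, $|\Gamma(v)/\Gamma(\theta-v)|^N$ decays faster than any power of $|v|$ along any such cone, and a Hadamard bound on the determinant then closes the argument.

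\textbf{Step 2: flatten the inner $w$-contour onto $\ell_{\zcrit+\delta}$.} With $v,v'$ now on $\Cv{\varphi}$, every point of which has real part $\le\zcrit<\zcrit+\delta$, pushing the $w$-contour out to the vertical line $\Re w=\zcrit+\delta$ keeps it strictly to the right of $\Cv{\varphi}$, so $1/(w-v')$ never contributes a residue; the poles of $\Gamma(\theta-w)$, the first at $w=\theta$, are not reached either, since $\delta\le\zcrit/2=\theta/4$ forces $\zcrit+\delta\le\tfrac{3}{4}\theta<\theta$. What is crossed, for each fixed $v$, are exactly the simple poles of $\frac{\pi}{\sin(\pi(w-v))}$ at $w=v+j$ with $1\le j\le q(v)=\lfloor\zcrit+\delta-\Re v\rfloor$; since $\Res_{w=v+j}\frac{\pi}{\sin(\pi(w-v))}=(-1)^j$ and $\Gamma(v)/\Gamma(v+j)$ absorbs the $1/\Gamma(w)$, each collected residue is precisely the term $\Res_{u,j}(v,v')$ of \eqref{eq:general residues of the sine in our fredholm determinant formula}, yielding the ``line integral $+\sum_{i=1}^{q(v)}\Res_{u,i}$'' form of \eqref{eq:BCFV kernel with tau equals 0}. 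The flattening is admissible because $\frac{\pi}{\sin(\pi(w-v))}\sim e^{-\pi|\Im(w-v)|}$, Stirling gives $|\Gamma(\theta-w)/\Gamma(w)|^N\asymp|\Im w|^{-2N\delta}$ on the line (using $\theta=2\zcrit$), and $\Re u>0$ bounds $|u^{w-v}|=|u|^{\Re(w-v)}e^{-(\arg u)\Im(w-v)}$ by a quantity growing slower than $e^{\pi|\Im(w-v)|}$; the residue sum is dominated by a geometric series in $j$ uniformly for $v\in\Cv{\varphi}$ (so $q(v)\to\infty$ as $\Re v\to-\infty$ is harmless), since $\bigl|\frac{\Gamma(v)}{\Gamma(v+j)}\frac{\Gamma(\theta-v-j)}{\Gamma(\theta-v)}\bigr|=\prod_{\ell=0}^{j-1}|v+\ell|^{-1}\prod_{\ell=1}^{j}|\theta-v-\ell|^{-1}$; and the apparent $\lfloor\cdot\rfloor$-discontinuity of $q(v)$ is illusory, as the jump in the line integral when $\Re v$ crosses $\zcrit+\delta-\Zgzero$ is cancelled by the matching $\Res_{u,i}$, so the final kernel is again analytic in $v$. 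Combining Steps~1 and 2 gives \eqref{32}.

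\textbf{Main obstacle.} The Stirling and Hadamard estimates are routine; the delicate work is the pole bookkeeping — checking at each stage that the $w$-contour remains wedged as claimed, that the slivers swept in Step~1 avoid $\Zleqzero$, and that orientations and signs in Step~2 make the collected residues exactly $+\Res_{u,j}$ — together with recognizing the ``line integral plus finite residue sum'' as the right analytic continuation of the kernel. A logically prior obstacle is re-deriving the corrected version of \citet[Theorem 2.1]{MR3366125} on which everything rests; this is self-contained and carried out separately in the appendix. The hypothesis $N\ge9$ is simply the threshold at which the polynomial exponents in the Stirling bounds of Steps~1--2 all come out with favorable sign.
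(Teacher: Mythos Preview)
Your proposal misidentifies where the actual work lies, and consequently the argument has a genuine gap.

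\textbf{The $\tau\to 0$ limit is the heart of the proof, not the contour deformations.} The BCFV formula \citep[Theorem 2.1]{MR3366125} is stated only for $\tau>0$; the $e^{\tau(sv+s^2/2)}$ factor in their kernel provides quadratic-exponential decay along the vertical parts of the $s$-contour, and this is what makes their Fredholm expansion converge. You treat ``specialize to $\tau=0$'' as a starting assumption, but it is precisely the thing to be proved. The paper's route is: keep $\tau>0$, rewrite the $\Cs{v}$ contour as a vertical line at $\Re(s)=-\Re(v)+\zcrit+\delta$ plus a finite residue sum from the ``sausage'' $\Cs{v,\Box}$, and then take $\tau\searrow 0$ using Propositions giving uniform-in-$\tau$ bounds on the kernel along $\Cv{\varphi}$ (so that dominated convergence and the Hadamard bound pass the limit through the Fredholm series). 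Those uniform bounds are the new estimates the paper advertises; your Stirling remarks in Step~2 are of the right flavor but are deployed for the wrong purpose (justifying a contour move at $\tau=0$ rather than justifying the limit from $\tau>0$).

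\textbf{Your description of the BCFV contours is off, and Step~1 is moot.} In the BCFV formula as the paper records it, the outer $L^2$-contour is already the wedge $\Cv{\varphi}$, and the inner $s$-contour $\Cs{v}$ is not a parallel cone but a vertical line with a rectangular detour to the right of $s=0$ (so residues at $s=1,2,\ldots$ are already inside the sausage, and the change of variable $w=v+s$ lands the vertical part at $\Re(w)=3\theta/4$). There is no outer deformation to perform. What remains is only to slide the vertical part of the $w$-contour from $3\theta/4$ down to $\zcrit+\delta$ (a routine Cauchy argument using the decay already established), not to build it from a cone. Your residue bookkeeping in Step~2 is essentially correct, but it reproduces what the sausage already encodes rather than being the crux.

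In short: replace ``set $\tau=0$, then deform'' by ``deform at $\tau>0$ (trivial, since the outer contour is already in place and the inner one just slides), then prove the uniform kernel bounds needed to send $\tau\searrow 0$.'' The appendix is not merely a contour fix for \citep{MR3366125}; its Propositions are exactly those uniform bounds.
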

\begin{oldnotes}
    The formula $\Gamma(v + 1) = v \Gamma(v)$ always works because it is used to analytically continue the usual Gamma function~\href{http://dlmf.nist.gov/5.5}{(the formula is here)}. I'd originally used the formula to simplify the residues in~\Eqref{eq:general residues of the sine in our fredholm determinant formula}, but this turned out to not be that useful.
\end{oldnotes}

\begin{figure}[!h]
    \def\svgwidth{3.2in}
    \hspace{1.4in}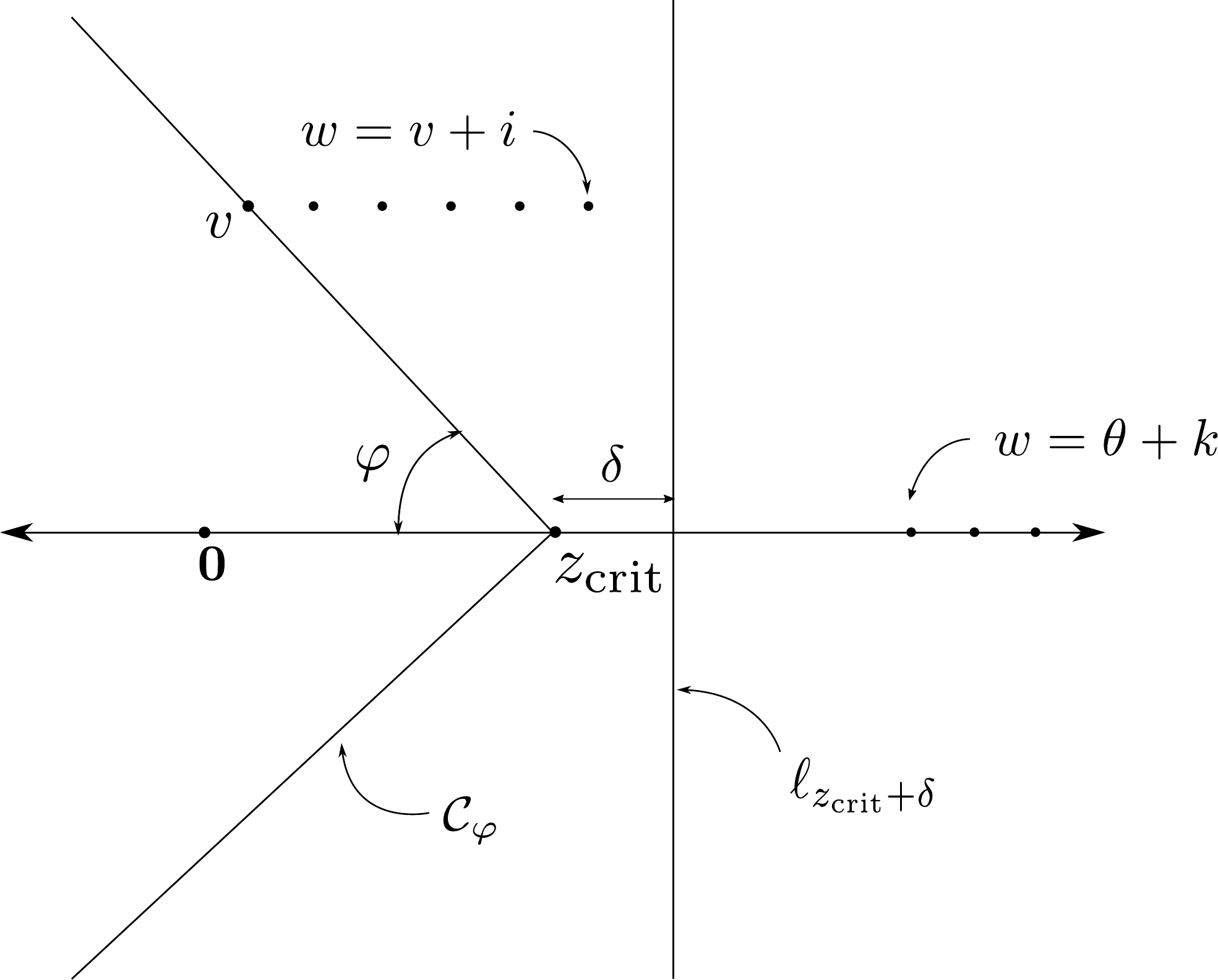
    \caption{Contours in~\Thmref{theorem4.1}. The critical point is at $\zcrit = \frac{\theta}{2}$. The triangular contour is $C_{\varphi}$, and the vertical contour $l_{\zcrit + \delta}$ has real part $\zcrit + \delta$. There are two sets of poles that one needs to watch out for. The poles of the sine are shown as dots to the right of $v$ and are of the form $w = v + i$. The poles of  $\Gamma(\theta - w)$ are of the form $w = \theta + k$, $k=0,1\ldots$.} 
    \label{fig}
\end{figure}

\begin{remark}
    Theorem~\ref{thm:our version of BCFV formula with tau equals 0} is proved by setting $\tau = 0$ in \citep[Theorem 2.1]{MR3366125}, as suggested in Remark 2.9 of the same paper. This requires a new estimate, and is done in \Secref{sec:appendix on fredholm determinant}. 
    
    We will see in the next section that the critical point of the integrand of $\Kloggamma$ in~\Eqref{eq:BCFV kernel with tau equals 0} is at $\zcrit$. The contours $\Cw{\zcrit + \delta}$ (as $\delta \to 0$) and $\Cv{\varphi}$ are located at the critical point. 
\end{remark}


\subsection{Estimates along the contours}
\label{sec:estimates along the contours}
We are interested in the asymptotic probability distribution of \eqref{eq:scaled-log-partition-functions}. The trick is to rewrite the left-hand side of \eqref{32} as 
 \begin{equation}\label{34}
  \E\left[\exp\left( -e^{\sigma N^{1/3} (h_N-t)} \right)\right]
 \end{equation}
 by taking
 \begin{equation}
  u =e^{- NF - t\sigma N^{1/3}}.
  \label{eq:u definition in intermediate disorder}
\end{equation}
As $N\nearrow\infty$, by \eqref{lbonbeta} $\sigma N^{1/3} \nearrow \infty$, and \eqref{34} becomes
$\lim_{N\nearrow \infty}\Prob( h_N \leq t)$.
Now we consider the same limit of the right-hand side of \eqref{32}. We start with a formal critical point analysis of the integral in \Eqref{eq:BCFV kernel with tau equals 0}, which can be rewritten as
\begin{equation}\label{BCRformula}
   -\frac1{2\pi\I}\int_{\Cw{\zcrit + \delta}}\frac\pi{\sin(\pi(w-v))}
e^{ N\left[G(v)-G(w)\right] + t \sigma N^{1/3}(v-w)}\frac{dw}{w-v'}
\end{equation}
where we have ignored the residues, dropped the subscript $u$ in the kernel, and let
\begin{equation}
  G(z)  = \log \Gamma(z) -\log \Gamma(\theta-z)  + F(\beta) z.
  \label{eq:u-G-f-in-intermediate-disorder}
\end{equation}
We have
$
G'(z) = \Psi(z) + \Psi(\theta-z) +F(\beta)$.
From~\eqref{13}, it follows that the critical point, i.e. $G'(\zcrit) =0$, is at $\zcrit=\theta/2$, 
 and $G''$ vanishes there as well. Therefore
the exponent is cubic near the critical point and it is natural to define
\begin{equation}
\tilde{v} = \sigma N^{1/3}(v-\zcrit), \qquad \tilde{w} = \sigma N^{1/3}(w-\zcrit),
\label{eq:rescaling the variables in the loggamma kernel to take a limit}
\end{equation}
and let $\tilde{K}^N(\tilde{v},\tilde{v}') = \Ksteepestdescent(v,v')$ in \Eqref{eq:BCFV kernel with tau equals 0}. The change of variable introduces a Jacobian factor of $(\sigma N^{1/3})^{-1}$ into the Fredholm expansion \Eqref{eq:fredholm series expansion}. Then, it is easy to prove the following lemma. 
\begin{lemma}
\begin{equation}
    \lim_{N \to \infty} \frac{1}{(\sigma N^{1/3})} \tilde{K}^N(\tilde{v},\tilde{v}') =  \Kairy(\tilde{v},\tilde{v}'),
    \label{eq:statement of pointwise convergence to airy kernel}
\end{equation}
where the Airy kernel is defined as
\begin{equation}\label{Krkappa}
    \Kairy(\tilde{v},\tilde{v}') := \frac1{2\pi\I}\int \frac{\exp\left\{-\tfrac{1}{3}\tilde{v}^3+ t \tilde{v}\right\}}
    {\exp\left\{\tfrac{1}{3}\tilde{w}^3+ t \tilde{w}\right\}}
    \frac{d\tilde{w}}{(\tilde{v}-\tilde{w})(\tilde{w}-\tilde{v}')}.
\end{equation}
    \label{lem:pointwise convergence of loggamma kernel to airy kernel}
The Airy kernel acts on the contour $\{e^{-2\pi\I/3}\R^+\cup e^{2\pi\I/3}\R^+\}$ and the integral in $\tilde{w}$
is on the contour $\{e^{-\pi\I/3}\R^+ + \delta\} \cup \{e^{\pi\I/3}\R^+ + \delta \}$ for any horizontal shift
$\delta>0$. Both are oriented to have increasing imaginary part.
\end{lemma}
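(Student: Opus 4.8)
The plan is to carry out a pointwise Laplace (steepest-descent) analysis of the integral in \eqref{BCRformula} at the triple critical point $\zcrit$, where $G'$ and $G''$ both vanish and $G$ is locally cubic. First I would insert the rescaling \eqref{eq:rescaling the variables in the loggamma kernel to take a limit} and change variables $w=\zcrit+\tilde w/(\sigma N^{1/3})$ as well. This produces the Jacobian $\d w=(\sigma N^{1/3})^{-1}\d\tilde w$, converts $1/(w-v')$ into $\sigma N^{1/3}/(\tilde w-\tilde v')$, and—since $w-v=(\tilde w-\tilde v)/(\sigma N^{1/3})\to 0$ and $\pi/\sin(\pi x)=x^{-1}(1+\mathcal{O}(x^2))$—converts $-\pi/\sin(\pi(w-v))$ into $-\sigma N^{1/3}/(\tilde w-\tilde v)$ up to a factor $1+\mathcal{O}(N^{-2/3})$. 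The three factors of $\sigma N^{1/3}$ so produced combine to exactly the $\sigma N^{1/3}$ removed in \eqref{eq:statement of pointwise convergence to airy kernel}, leaving
\begin{equation*}
\frac{1}{\sigma N^{1/3}}\,\tilde{K}^N(\tilde v,\tilde v')=-\frac{1}{2\pi\I}\int\frac{1+\mathcal{O}(N^{-2/3})}{(\tilde w-\tilde v)(\tilde w-\tilde v')}\,\exp\!\big(N[G(v)-G(w)]+t(\tilde v-\tilde w)\big)\,\d\tilde w ,
\end{equation*}
where for $N$ large (and $\delta<1$) the rescaled $v$ is so close to $\zcrit$ that $q(v)=0$ in \eqref{eq:number of residues and zcrit definition}, so the residue terms of \eqref{eq:general residues of the sine in our fredholm determinant formula} are absent.

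Next I would Taylor expand $G$ of \eqref{eq:u-G-f-in-intermediate-disorder} about $\zcrit$. From $G'(z)=\Psi(z)+\Psi(\theta-z)+F(\beta)$ and \eqref{13} one gets $G'(\zcrit)=G''(\zcrit)=0$ and $G'''(\zcrit)=2\Psi''(\theta/2)=-2\sigma(\beta)^3$, hence $G(\zcrit+\epsilon)=G(\zcrit)-\tfrac13\sigma^3\epsilon^3+\mathcal{O}(\epsilon^4)$. Taking $\epsilon=\tilde v/(\sigma N^{1/3})$ and $\epsilon=\tilde w/(\sigma N^{1/3})$ yields $N[G(v)-G(w)]=-\tfrac13\tilde v^3+\tfrac13\tilde w^3+\mathcal{O}(N^{-1/3})$ uniformly on compact subsets of the rescaled contours. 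Hence the integrand above converges pointwise, as $N\to\infty$, to that of $\Kairy$ in \eqref{Krkappa}, so that formally $\tfrac{1}{\sigma N^{1/3}}\tilde{K}^N(\tilde v,\tilde v')\to\Kairy(\tilde v,\tilde v')$; the remaining work is to make this passage to the limit rigorous.

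To do so I would deform $\Cw{\zcrit+\delta}$ so that near $\zcrit$ it exits along the steepest-descent rays $\zcrit+e^{\pm\I\pi/3}\R^+$, i.e., the directions in which $\Re[-(w-\zcrit)^3]<0$. This is legitimate because the poles of the integrand—those of $\pi/\sin(\pi(w-v))$ at $w=v+n$, $n\geq1$, and those of $\Gamma(\theta-w)$ at $w=\theta,\theta+1,\dots$—stay at distance bounded away from $\zcrit$ uniformly in large $N$, hence at distance $\to\infty$ on the rescaled scale; in the rescaled variables the contour becomes the shifted wedge $\{e^{\pm\I\pi/3}\R^++\delta\}$ of the statement. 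One then splits the integral into a part within a fixed distance of $\zcrit$ and its complement, shows the complement is negligible as $N\to\infty$, and on the central part invokes dominated convergence through a bound of the form $\Re\big(N[G(v)-G(w)]\big)\le -c\,|\tilde w|^3+C$ valid uniformly in large $N$. Producing this bound—together with the negligibility of the far part—is the one genuinely nontrivial step and the main obstacle: near $\zcrit$ it is immediate from the cubic Taylor expansion and the steepest-descent choice of directions, but on the remainder of the deformed contour it needs the large-argument asymptotics (and imaginary-direction monotonicity) of $\log\Gamma$ and $\Psi$, i.e., the same circle of estimates that will be needed to control the full Fredholm expansion. Granting it, dominated convergence yields \eqref{eq:statement of pointwise convergence to airy kernel}; a final appeal to Cauchy's theorem, using that $\Kairy$ in \eqref{Krkappa} is insensitive to the horizontal shift $\delta>0$ of its $\tilde w$-contour, then identifies the limit with $\Kairy(\tilde v,\tilde v')$.
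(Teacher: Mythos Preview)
Your proposal is correct and follows the same steepest-descent strategy as the paper: local deformation of the $w$-contour to the Airy directions $e^{\pm\I\pi/3}$, third-order Taylor expansion of $G$ at $\zcrit$, a near/far split of the integral with dominated convergence on the near part, and control of the far part deferred to the decay estimates for $\Re G$ along the contours---precisely the ``circle of estimates'' you flag as the nontrivial step (the paper supplies them as Lemmas~\ref{lem: decay of G along the Cv contour}--\ref{lem:residues are small}). One small difference: you dispose of the residue terms by noting that $q(v)=\lfloor(\delta-\Re\tilde v)/(\sigma N^{1/3})\rfloor=0$ for large $N$ at fixed $\tilde v$, which is cleaner for the pointwise statement than the paper's invocation of its residue-bound lemma (which it needs anyway for the Hadamard estimate).
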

The Fredholm determinant of the right hand side of \eqref{Krkappa} is $F_{\rm GUE}(t)$~\citep{MR3116323}. We prove \Lemref{lem:pointwise convergence of loggamma kernel to airy kernel} and flesh out the details of this sketch in the rest of this section.

Next, we upgrade the pointwise convergence in \Lemref{lem:pointwise convergence of loggamma kernel to airy kernel} to prove that $\det(\Id + \Ksteepestdescent) \to \det(\Id + \Kairy)$. Recall the kernel~\eqref{eq:BCFV kernel with tau equals 0} 
\begin{equation}
    \Ksteepestdescent(v,v') = \frac{1}{2\pi \I} \int_{\Cw{\zcrit + \delta(\sigma N)^{-1/3}}} I(v,v',w - v) dw + \sum_{i=1}^{q(v)} \Res_{i}(v,v'),
    \label{eq:log gamma kernel K for in position for asymptotic analysis}
\end{equation}
where $I(v,v',w-v)$ is the integrand in~\Eqref{BCRformula}. We drop the subscript $u$ on $\Ksteepestdescent$, $I$ and $\Res_i$ in this section to indicate that we have set $u$ as in~\Eqref{eq:u definition in intermediate disorder}. The kernel acts on the $\Cv{\varphi}$ contour as before, and we set $\varphi = \pi/4$. The little extra displacement of the $\Cw{\zcrit + \delta(\sigma N)^{-1/3}}$ is a necessary technicality that we will address in due course. Therefore we will henceforth simply write $\Cw{\zcrit}$ as a shorthand.

For $(v,v')$ on the $\Cv{\pi/4}$ contour, we show that
\begin{equation}
    |\Ksteepestdescent(v,v')|\leq f(v,N),
    \label{eq:exponential bound along contours for the kernel}
\end{equation}
where $f(v,N)$ is defined in \Lemref{lem:rigorous hadamard bound on steepest descent contours}. Then, from the Hadamard inequality for determinants, we get for $m > 1$,
\begin{equation}
    \left|\det(\Ksteepestdescent(v_i,v_j))_{1\leq i,j\leq m}\right|\leq \prod_{i=1}^m f(v_i,N) m^{m/2}.
    \label{eq:hadamard-inequality-to-bound-determinant-independent-of-N}
\end{equation}
$f(v,N)$ depends on $v$ and $N$ in such a way that it integrates over $\Cv{\pi/4}$ to a quantity that is bounded above by a constant independent of $N$.  
It follows that the Fredholm expansion of the determinant
\begin{align}
    \det(I+\Ksteepestdescent)_{L^2(\Cv{\pi/4})} 
    & =\sum_{m=0}^\infty \frac{1}{m!} \int_{\Cv{\pi/4}} dv_1 \cdots \int_{\Cv{\pi/4}} dv_m \det(\Ksteepestdescent(v_i,v_j))_{1\leq i,j\leq m},\label{eq:fredholm series expansion}
\end{align}
is absolutely summable uniformly in $N$ (see \Eqref{eq:rigorous bound for mth term in fredholm series}). Thus, we can take the $N\to\infty$ limit inside the series and integrals and replace $\Ksteepestdescent$ by its pointwise limit. This is similar to what was done in \citet{MR3116323}, but now the quantity in
\eqref{eq:hadamard-inequality-to-bound-determinant-independent-of-N} must be bounded uniformly in $\theta$ as well as $N$, since $\theta$ can go to infinity with $N$ (see \Thmref{thm:main-gue-theorem}). The rigorous estimates are shown in~\Lemref{lem:rigorous hadamard bound on steepest descent contours}. \Lemref{lem:pointwise convergence of loggamma kernel to airy kernel} and \eqref{eq:exponential bound along contours for the kernel} require several estimates on contours that appear in Lemmas \ref{lem: decay of G along the Cv contour}, \ref{lem:decay of G along the Cs contour} and \ref{lem:residues are small}. These estimates are summarized in the following 4 steps below.

Recall the function $G(z)$ defined in~\Eqref{eq:u-G-f-in-intermediate-disorder}. Using \Eqref{13}, we write is as
\begin{equation}
    G(z) = \log \Gamma(z) -\log \Gamma(\theta-z)  - 2 \Psi(\zcrit) z.
    \label{eq:G function second definition}
\end{equation}
The bound in~\Eqref{eq:exponential bound along contours for the kernel} will follow from an analysis of this function along the contours $\Cv{\pi/4}$ and $\Cw{\zcrit}$, and an estimate on the residues $\Res_{i}(v,v')$. The constants in the estimates are independent of $N$, but they do depend on $\theta$. As long as $\theta \geq \theta_0 > 0$, the constants are well-behaved; this is guaranteed by $\sigma(\beta) N^{1/3} \to \infty$ in \eqref{eq:condition that determines the number of moments we need to match}.

In the following, we set $\tilde\sigma = \sigma(\beta)N^{1/3}$, and note that
\begin{equation}
    \tilde\sigma = \Theta((N \zcrit^{-2})^{1/3}).
    \label{eq:order of magintude of tilde sigma}
\end{equation}
\begin{enumerate}
    \item \label{step:G taylor expansion in intermediate regime} In \Propref{prop:estimates for third and fourth derivative of G close to critical point}, we first show that the Taylor approximation is  effective  in the region $|z - \zcrit| \leq c \tilde\sigma^{-1}$. Although $G$ is analytic, one must be careful because the derivatives of $G$ are a function of $\theta$, which is allowed to go to infinity with $N$.

        Using the Taylor expansion, we may also arrange for an estimate of the form (recall $\zcrit = \theta/2$)
        \begin{equation*}
            \Re(G(z) - G(\zcrit)) \leq - \frac{C}{\zcrit^2} |z - \zcrit|^3, \quad |z - \zcrit| \leq \frac{\zcrit}{2},
        \end{equation*}
        where $C>0$  can be explicitly chosen. This is needed to show that the pointwise limit of $\Ksteepestdescent$ is $\Kairy$.

    \item \label{step:G is a decreasing along the contour} In \Lemref{lem: decay of G along the Cv contour}, we show that the real part of $G$ decreases sufficiently rapidly away from the critical point on the $\Cv{\pi/4}$ contour. The upper and lower halves of the $\Cv{\pi/4}$ contour are parametrized as
    \begin{equation}
        z(r) = \zcrit  + r \hat{e}_\pm, \quad r \geq 0
        \label{eq:parametrization of upper part of Cv contour}
    \end{equation}
    where $\hat{e}_\pm = -1 \pm \I$.  On both halves of the contour, the derivative of $G$ satisfies
      \begin{equation}\label{52}
            \frac{d}{dr} \Re( G(z(r)) - G(\zcrit) )
            \leq - \frac{2 r^2 }{( 1 + \zcrit + 2 r )^2}.
       \end{equation}
    This captures the cubic behavior of $G$ near the critical point, and the linear decay for large $r$: for some constants $C \AND r_0$ independent of $\zcrit$ and $N$,
    \begin{equation}
        \Re( G(z(r)) - G(\zcrit) ) \leq -H(r) := - 
        \begin{cases}
           C \zcrit^{-2} r^3 & r \leq r_0\\
           C \left( \zcrit^{-2} r_0^3 + \zcrit^{-2} (r - r_0) \right) & r > r_0 
        \end{cases}.
        \label{eq:estimate for G function along steep descent contour}
    \end{equation}

    \item \label{step:similar estimates along Cs contour}
        In \Lemref{lem:decay of G along the Cs contour}, we estimate $G$ on the $\Cw{\zcrit + \delta}$ contour. We use the parametrization
        \begin{equation}
            w(r) = \zcrit + r \hat{e}_\pm + \delta (\sigma N^{1/3})^{-1}, \quad r \geq 0,
            \label{eq:parametrization along Cw contour}
        \end{equation}
        where $\hat{e}_\pm = \pm \I$. Since $\Cw{\zcrit + \delta}$ is not a steep-descent contour, we cannot show that the derivative of $\Re(G)$ is strictly positive (c.f.~\Eqref{52}). So we first move the $\Cw{\zcrit + \delta}$ contour to a $\delta' < \delta$, and then locally align it with the Airy contours $\{e^{-\pi\I/3}\R^+ + \zcrit + \delta'\} \cup \{e^{\pi\I/3}\R^+ + \zcrit + \delta' \}$ for a very small distance $r' \leq r_0$. Then, we use the Taylor expansion and obtain an estimate analogous to the first equation in \eqref{eq:estimate for G function along steep descent contour} for $|w(r) - \zcrit| \leq r'$. For the rest of the contour, we show in \Lemref{lem:decay of G along the Cs contour}, we show  
        \begin{equation*}
            \Re(G(w(r)) - G(w(r_0)) \geq 0, \quad |w(r) - \zcrit| \geq r'.
        \end{equation*}

    \item \label{step:the residues go to zero} Finally, in \Lemref{lem:residues are small}, we estimate the contribution of the residues to the bound in~\Eqref{eq:exponential bound along contours for the kernel}. 
        The bound also shows that the residues vanish in the limit, and helps prove \Lemref{lem:pointwise convergence of loggamma kernel to airy kernel}. 
\end{enumerate}
\begin{oldnotes}
    The $G$ function looks concave when draw in mathematica. We tried to prove it then and failed. This is because we did not have the tools that we have now to analyze $G'$ for all $\theta$.
\end{oldnotes}

\begin{proof}[Proof of \Lemref{lem:pointwise convergence of loggamma kernel to airy kernel}]
    Using the estimates in steps $1$-$4$, we first show that the pointwise limit of the integral in~\Eqref{eq:log gamma kernel K for in position for asymptotic analysis} is the Airy kernel. We will use both the rescaled variables $\tilde{v},\tilde{v}'$ from \eqref{eq:rescaling the variables in the loggamma kernel to take a limit} and $v,v'$ in the following.

First consider the integral term in \eqref{eq:statement of pointwise convergence to airy kernel}, and split the integral over the top half of the contour into three parts
\begin{equation}
    \tilde\sigma^{-1} \tilde{K}^N(\tilde{v},\tilde{v}') = \int_{0}^{M \tilde{\sigma}^{-1}}
     + \int_{M\tilde{\sigma}^{-1}}^{r_0} + \int_{r_0}^{\infty}  \tilde\sigma^{-1} I(v,v',w(r)-v) dw(r) := I_1 + I_2 + I_3,
    \label{eq: splitting integral into proof of rescaled kernel convergence to airy}
\end{equation}
where $w(r)$ is the parametrization in~\Eqref{eq:parametrization along Cw contour} of the $\Cw{\zcrit}$ contour, and $M > 0$ is a parameter that will eventually go to infinity. Since the integrand is analytic in a tiny ball of size $M\tilde{\sigma}^{-1}$, we modify the contours so that they are locally aligned with the Airy contours when $r \leq M\tilde{\sigma}^{-1}$. 

We first estimate the absolute value of $I_3$. Note that there is a $C'$ such that $C' r \leq C r^3  - r$ for $r \geq r_0$. We will use this estimate in both the $v$ and $w$ variables.
\begin{align*}
      |I_3| 
      & = \left| -\frac1{2\pi\I}\int_{r_0}^{\infty} \frac{\pi\tilde{\sigma}^{-1}}{\sin(\pi(w-v))}
      e^{ N\left(G(v) - G(w)\right) + t \tilde\sigma (v-w)}\frac{dw}{w-v'} \right| \\
      & \leq C \delta^{-1} r_0^{-1} e^{-N H(|v|) + \tilde\sigma |v|} e^{-C' M^3} \int_{  r_0 }^\infty e^{-\pi r} d r , \\
      & \leq C \delta^{-2} r_0^{-1} e^{-C' |\tilde{v}|} e^{-C' M^3}, 
\end{align*}
where $H(r)$ is defined in \eqref{eq:estimate for G function along steep descent contour}, and we have used \Eqref{eq:order of magintude of tilde sigma}, $|\sin(\pi (w(r) - v))|^{-1} \leq c\delta^{-1} \tilde\sigma e^{-\pi r}, \AND |w(r) - v'|^{-1} \leq c r_0$ for some constant $c$.

To estimate $I_2$, we use $e^{-\pi r} \leq 1$, make the change of variable $\tilde r = \tilde \sigma r$, and use the bound $|w(r) - v'|^{-1} \leq c \delta^{-1} \sigma$; all the other estimates are the same as the ones used for $I_3$. Thus, we obtain:
\begin{align*}
    |I_2| & \leq  
    C \delta^{-2} e^{-C |\tilde{v}| } \int_{M}^{r_0 \tilde{\sigma}} e^{- \tilde{r}} d \tilde{r} \leq C \delta^{-2} e^{-C'|\tilde{v}|} e^{-M}.
\end{align*}
In $I_1$, we make the change of variable in \eqref{eq:rescaling the variables in the loggamma kernel to take a limit}, and take $N\to\infty$. By dominated convergence, the limit can be taken inside the integral, and by the argument in the first part of Section \ref{sec:estimates along the contours}, the first integral goes to the integrand of $\Kairy$ in the rescaled variables $(\tilde{v},\tilde{v}')$. Letting $M \to \infty$ shows that the integral term in \eqref{eq:log gamma kernel K for in position for asymptotic analysis} goes to $\Kairy$. From~\Lemref{lem:residues are small}, it follows that residues converge pointwise to $0$.
\end{proof}

Next, we flesh out the details of the estimates in steps 1 through 4.

\noindent\Stepref{step:G taylor expansion in intermediate regime}.
\textbf{Taylor expansion near the critical point.} We have already seen that the first two derivatives of $G$ (defined in \eqref{eq:u-G-f-in-intermediate-disorder}) vanish at the critical point. If the third and fourth derivatives of $G$ were well-behaved, the Taylor expansion for $G(z)$ is an effective approximation when $z$ is close to the critical point:
\[
    G(z) - G(\zcrit) = \frac{G^{(3)}}{3!}(\zcrit) (z - \zcrit)^3 + \frac{G^{(4)}}{4!}(\xi) (z - \zcrit)^4,
\]
for $\xi \in \{ y \colon |y-\zcrit| < |z-\zcrit| \}$. Since $G$ is an analytic function, it is clear that $G^{(3)}$ and $G^{(4)}$ are well-behaved for fixed $\zcrit$. However, we allow $\zcrit \to \infty$; ~\Propref{prop:estimates for third and fourth derivative of G close to critical point} shows roughly that $\frac{G^{(4)}(z)}{G^{(3)}(\zcrit)} \approx \frac{C}{\zcrit}$ for a constant $C > 0$,
when $|z(r) - \zcrit| \leq \zcrit/2$ and $\zcrit \to \infty$.
\begin{prop}
    
    When $|z - \zcrit| \leq \zcrit/2$, 
    \begin{align}
        \frac{2}{(2 + \zcrit)^2} \leq -G^{(3)}(\zcrit) & - \frac{4}{\zcrit^3} \leq  \frac{2}{\zcrit^2}, \label{pppg}\\
        \left|G^{(4)}(z) \right| & \leq \frac{96}{\zcrit^4} + \frac{32}{\zcrit^3}.\label{ppp|}
    \end{align}
    \label{prop:estimates for third and fourth derivative of G close to critical point}
\end{prop}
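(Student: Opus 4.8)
The plan is to reduce both bounds to the absolutely convergent series \eqref{eq:series representation for digamma function} for the digamma function. Differentiating \eqref{eq:G function second definition}, the linear term $-2\Psi(\zcrit)z$ disappears after the first derivative, so $G^{(3)}(z) = \Psi''(z) + \Psi''(\theta - z)$ and $G^{(4)}(z) = \Psi'''(z) - \Psi'''(\theta - z)$. Differentiating \eqref{eq:series representation for digamma function} termwise — which I will justify by noting that the differentiated series converge uniformly on compact sets avoiding the nonpositive integers, and that both $z$ and $\theta-z$ lie in the closed disk of radius $\zcrit/2$ about $\zcrit$, hence have real part $\ge \zcrit/2 > 0$, whenever $|z-\zcrit|\le\zcrit/2$ — yields $\Psi^{(k)}(w) = (-1)^{k+1} k!\sum_{n\ge 0}(n+w)^{-k-1}$ for $k\ge 1$.

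For \eqref{pppg} I work at the real point $z=\zcrit$, where $\theta-\zcrit=\zcrit$, so $G^{(3)}(\zcrit) = 2\Psi''(\zcrit) = -4\sum_{n\ge 0}(n+\zcrit)^{-3}$. Pulling out the $n=0$ term leaves $-G^{(3)}(\zcrit) - 4\zcrit^{-3} = 4\sum_{n\ge 1}(n+\zcrit)^{-3}$, and I sandwich this tail between integrals of the decreasing function $x\mapsto(x+\zcrit)^{-3}$: the comparisons $4\int_1^\infty \le 4\sum_{n\ge 1} \le 4\int_0^\infty$ give $2(1+\zcrit)^{-2} \le 4\sum_{n\ge 1}(n+\zcrit)^{-3} \le 2\zcrit^{-2}$, and $2(1+\zcrit)^{-2}\ge 2(2+\zcrit)^{-2}$ then gives \eqref{pppg}.

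For \eqref{ppp|} I write $z=\zcrit+s$ with $|s|\le\zcrit/2$ (allowing $s$ complex, since these estimates are used on a contour), so $\theta-z=\zcrit-s$ and $G^{(4)}(z) = 6\sum_{n\ge 0}\bigl[(n+\zcrit+s)^{-4} - (n+\zcrit-s)^{-4}\bigr]$. The key is to use the cancellation in this difference: with $a = n+\zcrit \ge \zcrit$ and $|s|\le a/2$, the algebraic identity $(a+s)^{-4}-(a-s)^{-4} = -8as(a^2+s^2)(a^2-s^2)^{-4}$ together with $|a^2+s^2|\le a^2+|s|^2\le\tfrac54 a^2$ and $|a^2-s^2|\ge a^2-|s|^2\ge\tfrac34 a^2$ (reverse triangle inequality) bounds the $n$-th term by a constant times $|s|\,(n+\zcrit)^{-5}$. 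Summing via the integral comparison $\sum_{n\ge 0}(n+\zcrit)^{-5}\le \zcrit^{-5}+\tfrac14\zcrit^{-4}$ and only then invoking $|s|\le\zcrit/2$ (so the stray factor $|s|$ absorbs one power of $\zcrit$) produces a bound of the form $c_1\zcrit^{-4}+c_2\zcrit^{-3}$; tracking the numerical constants shows $c_1<96$ and $c_2<32$, which is \eqref{ppp|}. (For real $z$ one may instead write $G^{(4)}(z) = \int_{\zcrit-s}^{\zcrit+s}\Psi^{(4)}(t)\,dt$, bound $|\Psi^{(4)}(t)|=24\sum_{n\ge 0}(n+t)^{-5}\le 24t^{-5}+6t^{-4}$ by integral comparison, and integrate; this is marginally simpler.)

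The computations are all elementary; the one genuine subtlety — and the reason the proposition has to be proved at all rather than quoted from analyticity of $G$ — is that the constants must be uniform in $\zcrit$ as $\zcrit\to\infty$. Concretely, in the fourth-derivative estimate one must keep the denominator $(n+\zcrit)$ rather than collapsing it to $(n+\zcrit/2)$ (which is why the exact quartic identity, not a crude mean-value bound along a complex segment, is what works for complex $z$), and one must use $|s|\le\zcrit/2$ only after summing the series; performing these steps in the wrong order costs a multiplicative factor and destroys the stated constants. The remaining point to record is that the hypothesis $|z-\zcrit|\le\zcrit/2$ keeps $\Re z$ and $\Re(\theta-z)$ at least $\zcrit/2$, so every series and integral above stays uniformly away from the poles of the polygamma functions.
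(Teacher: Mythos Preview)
Your proof of \eqref{pppg} is essentially identical to the paper's: both pull out the $n=0$ term from $-G^{(3)}(\zcrit)=4\sum_{n\ge 0}(n+\zcrit)^{-3}$ and sandwich the tail between $4\int_1^\infty$ and $4\int_0^\infty$ of $(x+\zcrit)^{-3}$.

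For \eqref{ppp|} your route genuinely differs. The paper does not exploit the cancellation between $\Psi'''(z)$ and $\Psi'''(\theta-z)$: it simply bounds $|\Psi'''(z)|$ alone via the crude triangle-inequality estimate $|n+z|\ge \tfrac12(n+\zcrit)$, obtaining $|\Psi'''(z)|\le 96\sum_{n\ge 0}(n+\zcrit)^{-4}\le 96\zcrit^{-4}+32\zcrit^{-3}$, and declares \eqref{ppp|} proved. You instead pair the two series termwise and use the quartic identity $(a+s)^{-4}-(a-s)^{-4}=-8as(a^2+s^2)/(a^2-s^2)^{4}$ to turn $G^{(4)}$ into a sum of size $\sim |s|\sum_{n\ge 0}(n+\zcrit)^{-5}$, which after integral comparison and $|s|\le\zcrit/2$ indeed lands strictly below the stated constants. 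What your approach buys is that the numerical constants genuinely come out below $96$ and $32$; the paper's argument, read literally, bounds only one of the two polygamma terms in $G^{(4)}=\Psi'''(z)-\Psi'''(\theta-z)$, so applying the triangle inequality to both halves overshoots \eqref{ppp|} by a factor of~$2$ (harmless for the application, but your derivation is the one that actually delivers the stated bound). What the paper's route buys is brevity: one crude inequality and no algebraic identity.
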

\begin{proof}[Proof of~\Propref{prop:estimates for third and fourth derivative of G close to critical point}]
 Recall the series expansion for the digamma function in \eqref{eq:series representation for digamma function}. Differentiating~\Eqref{eq:G function second definition} thrice we obtain
   \begin{equation*}
        -G^{(3)}(\zcrit)  = - 2 \Psi^{(2)}(\zcrit)
        = \frac{4}{\zcrit^3} + 4 \sum_{n=1}^{\infty} \frac{1}{(n+ \zcrit)^3}.
    \end{equation*}
    Estimating the sum with an integral, we get
    \begin{align*}
     4 \int_1^{\infty} \frac{1}{(x + \zcrit)^3} dx \leq -G^{(3)}(\zcrit) - \frac{4}{\zcrit^3} & \leq  4 \int_0^{\infty} \frac{1}{(x + \zcrit)^3} dx
    \end{align*}
    which proves \eqref{pppg}. We estimate $G^{(4)}(z)$ similarly: To apply the integral test as before, we first show that $|x + z|$ is increasing with $x \in \R^+$. It is clear that if $|z - \zcrit| \leq \zcrit/2$, then $z$ has positive real part and consequently, so does $x+z$ for all $x > 0$. It follows that $|x + z|$ increases with $x$. Then, using~\Eqref{eq:series representation for digamma function} and $|x + z| \geq 2^{-1}(x + \zcrit)$,
    \begin{align*}
        \left|\Psi^{(3)}(z) \right|
        & \leq \sum_{n=0}^{\infty} \frac{6}{|n+z|^4} \leq \frac{96}{|\zcrit|^4} + \int_0^{\infty} \frac{96}{|x + \zcrit|^4} dx,
    \end{align*}
    which proves \eqref{ppp|}.
    
\end{proof}

\noindent\Stepref{step:G is a decreasing along the contour}.~\textbf{Decay of $G$ along the $\Cv{\pi/4}$ contour.}
 In the following lemma, we first compute the derivative of $G$ along a general contour. We will use this computation repeatedly to estimate $G$ along the $\Cv{\pi/4}$ and $\Cw{\zcrit}$ contours (\Lemref{lem:decay of G along the Cs contour}) and to estimate the residues in~\Lemref{lem:residues are small}.

  \begin{lemma}
      Let $z(r) = \zcrit + v(r)$ be a contour. Then, the derivative of  $\Re(G)$ is
      \[
        \frac{d}{dr} \Re(G(z(r))) = 2 \sum_{n=0}^{\infty} \frac{ - \Re( v'(r) v(r)^2 ) (n+\zcrit)^2 + \Re( v'(r) ) |v(r)|^4 }{(n+\zcrit)|(n+\zcrit)^2 - v(r)^2|^2}
      \]
      
      \label{lem:derivative of G general calculation}
  \end{lemma}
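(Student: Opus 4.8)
The plan is to reduce the computation to the series representation \eqref{eq:series representation for digamma function} of the digamma function and then separate real and imaginary parts termwise. First I would differentiate \eqref{eq:G function second definition}: since $\tfrac{d}{dz}\log\Gamma(z)=\Psi(z)$,
\[
  G'(z) = \Psi(z) + \Psi(\theta - z) - 2\Psi(\zcrit).
\]
Substituting $z=\zcrit+v$ and using $\theta=2\zcrit$ gives $G'(\zcrit+v)=\Psi(\zcrit+v)+\Psi(\zcrit-v)-2\Psi(\zcrit)$. Plugging \eqref{eq:series representation for digamma function} into each digamma — legitimate because each such series converges absolutely when its argument is bounded away from the nonpositive integers — the constants $-\gamma_{EM}$ and all the $\tfrac1{n+1}$ contributions cancel termwise, leaving the absolutely convergent series
\[
  G'(\zcrit+v) = \sum_{n=0}^{\infty}\Big(\frac{2}{n+\zcrit} - \frac{1}{n+\zcrit+v} - \frac{1}{n+\zcrit-v}\Big) = -2v^2\sum_{n=0}^{\infty}\frac{1}{(n+\zcrit)\big((n+\zcrit)^2 - v^2\big)},
\]
where the last equality is the partial-fraction identity $\tfrac{2}{a}-\tfrac{1}{a+v}-\tfrac{1}{a-v}=\tfrac{-2v^2}{a(a^2-v^2)}$ applied with $a=n+\zcrit$.

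Next I would apply the chain rule for the holomorphic function $G$ along the path $z(r)$: since $z'(r)=v'(r)$,
\[
  \frac{d}{dr}G(z(r)) = v'(r)\,G'(z(r)) = -2\sum_{n=0}^{\infty}\frac{v'(r)\,v(r)^2}{(n+\zcrit)\big((n+\zcrit)^2 - v(r)^2\big)}.
\]
To extract the real part of each summand, multiply the $n$-th numerator and denominator by $\overline{(n+\zcrit)^2 - v(r)^2}$; because $n+\zcrit$ is real, the denominator becomes $(n+\zcrit)\,|(n+\zcrit)^2 - v(r)^2|^2$ and the numerator becomes $-2\big((n+\zcrit)^2\,v'(r)v(r)^2 - v'(r)\,v(r)^2\,\overline{v(r)^2}\big)$. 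Since $v(r)^2\,\overline{v(r)^2}=|v(r)|^4$ is real, taking the real part yields $2\big(-\Re(v'(r)v(r)^2)(n+\zcrit)^2 + \Re(v'(r))\,|v(r)|^4\big)$, and summing over $n\ge 0$ is exactly the asserted formula; absolute convergence of the resulting series (each term is $O(n^{-3})$) justifies interchanging $\Re$ with the sum.

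There is no genuine obstacle here: the only points deserving a line of care are the termwise cancellation that turns the difference of three digamma series into a single absolutely convergent series for $G'$ (so that all rearrangements are valid), and bookkeeping of which products are real when splitting real and imaginary parts. The identity is precisely what \Lemref{lem:decay of G along the Cs contour} and \Lemref{lem:residues are small} will use, by inserting the explicit parametrizations $v(r)=r\hat{e}_\pm$ (with $\hat{e}_\pm=-1\pm\I$ on $\Cv{\pi/4}$, and $\hat{e}_\pm=\pm\I$, up to the horizontal shift, on $\Cw{\zcrit}$) and reading off the sign of each summand.
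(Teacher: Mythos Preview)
Your argument is correct and follows essentially the same route as the paper: differentiate $G$ via the digamma series \eqref{eq:series representation for digamma function}, combine the three terms through the partial-fraction identity $\tfrac{2}{a}-\tfrac{1}{a+v}-\tfrac{1}{a-v}=\tfrac{-2v^2}{a(a^2-v^2)}$ with $a=n+\zcrit$, and clear the complex denominator by multiplying by its conjugate before taking real parts. Your write-up is in fact slightly more careful than the paper's about justifying absolute convergence and the bookkeeping of which factors are real.
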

  \begin{proof}
      From~\Eqref{eq:G function second definition},
      \[
          \frac{d}{dr} G(z(r))
          = z'(r) \left( \Psi(\zcrit + v(r) ) - \Psi(\zcrit) \right) + z'(r) \left( \Psi(\zcrit - v(r)) - \Psi(\zcrit) \right).
      \]
    
    Using~\Eqref{eq:series representation for digamma function},
    \begin{align*}
        \frac{d}{dr} G(z(r))
        & = z'(r) \sum_{n=0}^{\infty}  \frac{2}{n+\zcrit} - \frac{1}{n+z(r)} -  \frac{1}{n+ \theta - z(r)} \\
        & = v'(r) \sum_{n=0}^{\infty}  \frac{2}{n+\zcrit} - \frac{2(n + \zcrit)}{(n+\zcrit + v(r))(n+\zcrit - v(r))}\\
        & = 2 \sum_{n=0}^{\infty} \frac{ - v'(r) v^2 (n+\zcrit)^2 + v'(r) |v(r)|^4 }{(n+\zcrit)|(n+\zcrit)^2 - v^2|^2}.
    \end{align*}
\end{proof}

\begin{lemma}
    $\Re(G)$ in~\Eqref{eq:G function second definition} satisfies the following derivative bound:
    \[
        \frac{d}{dr} \Re ( G(z(r))) \leq - \frac{2 r^2}{( 1 + \zcrit + 2r  )^2}, \quad r > 0, 
    \]
    where $z(r)$ is the parametrization of $\Cv{\pi/4}$ given in~\Eqref{eq:parametrization of upper part of Cv contour}.
    \label{lem: decay of G along the Cv contour}
\end{lemma}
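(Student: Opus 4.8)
The plan is to feed the parametrization $z(r)=\zcrit+r\hat e_\pm$, $\hat e_\pm=-1\pm\I$, from \eqref{eq:parametrization of upper part of Cv contour} into the general derivative formula of \Lemref{lem:derivative of G general calculation} and then bound the resulting series from below by something comparable to $r^2(1+\zcrit+2r)^{-2}$. Writing $v(r)=r\hat e_\pm$, one computes $v'(r)=\hat e_\pm$, $v(r)^2=\mp 2\I r^2$ and $|v(r)|^4=4r^4$; since $(n+\zcrit)^2$ is real and $v(r)^2$ purely imaginary, it follows that $\Re\bigl(v'(r)v(r)^2\bigr)=2r^2$, $\Re\bigl(v'(r)\bigr)=-1$, and $\bigl|(n+\zcrit)^2-v(r)^2\bigr|^2=(n+\zcrit)^4+4r^4$, all independent of the choice of sign. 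Substituting into \Lemref{lem:derivative of G general calculation} yields
\[
\frac{d}{dr}\Re\bigl(G(z(r))\bigr)=-4r^2\sum_{n=0}^{\infty}\frac{(n+\zcrit)^2+2r^2}{(n+\zcrit)\bigl((n+\zcrit)^4+4r^4\bigr)},
\]
a quantity that is manifestly negative for $r>0$; it remains to show the sum is at least $\tfrac12(1+\zcrit+2r)^{-2}$.

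The core of the argument is a short chain of crude but $\zcrit$-uniform lower bounds on this sum. First, using $(n+\zcrit)^4+4r^4\le\bigl((n+\zcrit)^2+2r^2\bigr)^2$, each summand is at least $\bigl[(n+\zcrit)\bigl((n+\zcrit)^2+2r^2\bigr)\bigr]^{-1}$. Since $y\mapsto\bigl[y(y^2+2r^2)\bigr]^{-1}$ is decreasing on $(0,\infty)$, the sum dominates $\int_{\zcrit}^{\infty}\frac{dy}{y(y^2+2r^2)}=\tfrac1{4r^2}\log\!\bigl(1+2r^2/\zcrit^2\bigr)$. Applying the elementary inequality $\log(1+u)\ge u/(1+u)$ (valid for $u\ge 0$) with $u=2r^2/\zcrit^2$ bounds this from below by $\tfrac1{2(\zcrit^2+2r^2)}$, and since $\zcrit^2+2r^2\le(1+\zcrit+2r)^2$ — the difference being $1+2r^2+2\zcrit+4\zcrit r+4r\ge0$ — we conclude
\[
\frac{d}{dr}\Re\bigl(G(z(r))\bigr)\le-\frac{2r^2}{\zcrit^2+2r^2}\le-\frac{2r^2}{(1+\zcrit+2r)^2},
\]
which is the claim.

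There is no deep obstacle; the proof is a sequence of routine estimates, and the only thing that genuinely requires care is that every constant be tracked uniformly in $\zcrit=\theta/2$, which is permitted to tend to infinity in \Thmref{thm:main-gue-theorem}. This is also the reason I would deliberately not evaluate the exact integral $\int_{\zcrit}^{\infty}\frac{y^2+2r^2}{y(y^4+4r^4)}\,dy$ (which brings in an $\arctan$) but instead settle for the coarser $\int\frac{dy}{y(y^2+2r^2)}$: the coarser bound already has exactly the shape needed downstream, reproducing the cubic behaviour of $\Re(G)$ near the critical point (for $r\lesssim\zcrit$ the bound is of order $r^2/\zcrit^2$, integrating to order $r^3/\zcrit^2$) and the eventual linear decay (for $r\gtrsim\zcrit$ it is of order $1$), in agreement with \eqref{eq:estimate for G function along steep descent contour}. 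Checking the two elementary inequalities used above and the monotonicity in the integral comparison is the only bookkeeping left.
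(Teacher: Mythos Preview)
Your proof is correct and follows the same overall strategy as the paper: plug the parametrization into the general formula of \Lemref{lem:derivative of G general calculation}, obtain the manifestly negative sum
\[
\frac{d}{dr}\Re(G(z(r)))=-4r^2\sum_{n\ge 0}\frac{(n+\zcrit)^2+2r^2}{(n+\zcrit)\bigl((n+\zcrit)^4+4r^4\bigr)},
\]
and bound the series below by an integral. The only difference is in the intermediate pointwise inequality. The paper uses the cleaner bound
\[
\frac{a^2+2r^2}{a(a^4+4r^4)}\ \ge\ \frac{1}{(a+2r)^3}\qquad (a=n+\zcrit),
\]
after which the integral test gives $\int_1^\infty (x+\zcrit+2r)^{-3}\,dx=\tfrac12(1+\zcrit+2r)^{-2}$ in one stroke. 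Your route via $a^4+4r^4\le(a^2+2r^2)^2$ produces the logarithm $\tfrac1{4r^2}\log(1+2r^2/\zcrit^2)$, which you then tame with $\log(1+u)\ge u/(1+u)$ and a final algebraic comparison. This is one step longer but yields the slightly sharper intermediate bound $-2r^2/(\zcrit^2+2r^2)$ before relaxing to the stated one; both chains are elementary and uniform in $\zcrit$, which is the only point that matters downstream.
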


\begin{proof}
    We parametrize the upper-half of the $\Cv{\pi/4}$ contour as in~\Lemref{lem:derivative of G general calculation} with $v(r) = r \hat{e}$ where $\hat{e} = -1 + \I$. Then,
    \begin{align*}
        \frac{d}{dr} \Re(G(z(r)))
        & = 2 \sum_{n=0}^{\infty} \frac{ - 2 r^2 (n+\zcrit)^2 - 4 r^4}{(n+\zcrit)|(n+\zcrit)^2 - 2 r^2 \I|^2}\\
        & \leq -4 \sum_{n=0}^{\infty} \frac{ r^2 }{(n+\zcrit + 2 r)^3} \leq - 4 \int_{1}^{\infty}  \frac{r^2}{( x + \zcrit  + 2 r )^3} \, dx \\
        & = - 2 \frac{r^2}{( 1 + \zcrit  + 2 r )^2}.
    \end{align*}

    This captures the behavior of $G$ along the steep-descent contours rather well:  cubic near the critical point, and then linear decay for $r \geq C\zcrit$. $G$ behaves symmetrically in the lower half plane, and hence satisfies the same estimates on the lower half of the $\Cv{\pi/4}$ contour.
    
\end{proof}

\noindent\Stepref{step:similar estimates along Cs contour}. \textbf{Decay along the $\Cw{\zcrit}$ contour.}
\begin{lemma}
    $\Re(G)$ in~\Eqref{eq:G function second definition} increases away from the critical point along the $\Cw{\zcrit}$ contour.
    \label{lem:decay of G along the Cs contour}
\end{lemma}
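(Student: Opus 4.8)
The plan is to reuse the general derivative formula from \Lemref{lem:derivative of G general calculation} with the $\Cw{\zcrit}$ parametrization $w(r) = \zcrit + v(r)$, $v(r) = r\hat e_\pm + \delta(\sigma N^{1/3})^{-1}$ where $\hat e_\pm = \pm\I$, and to split the contour into a small neighborhood of the critical point and its complement. On the small piece $|w(r)-\zcrit|\le r'\le r_0$, the contour is not steep-descent in its given ($\pm\I$) direction, so I would first slide the vertical contour from $\zcrit+\delta$ to $\zcrit+\delta'$ with $\delta'<\delta$ (picking up no extra poles, since the poles of $\Gamma(\theta-w)$ sit at $w=\theta+k$ and the poles of $\sin(\pi(w-v))$ at $w=v+i$ are to the right, as in \Figref{fig}), and then locally bend it so that near the critical point it runs along the Airy rays $\{e^{\mp\pi\I/3}\R^++\zcrit+\delta'\}$. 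Along those rays the cubic term $\tfrac{1}{3!}G^{(3)}(\zcrit)(w-\zcrit)^3$ has positive real part (this is exactly the content of \Stepref{step:G taylor expansion in intermediate regime}: $-G^{(3)}(\zcrit)>0$ and $(e^{\mp\pi\I/3})^3=-1$, so $\Re(G^{(3)}(\zcrit)(w-\zcrit)^3)=-G^{(3)}(\zcrit)r^3<0$ gives $\Re(-G)$... — more precisely one uses $\Re(G(w(r))-G(\zcrit))\ge c\zcrit^{-2}r^3$), and the quartic remainder is controlled by \Propref{prop:estimates for third and fourth derivative of G close to critical point}, which makes $|G^{(4)}(\xi)||w-\zcrit|^4 = O(\zcrit^{-3}r^4)$ negligible against the cubic for $r\le r'$ once $r'$ is a small fixed fraction of $\zcrit$. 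This yields $\Re(G(w(r))-G(\zcrit))\ge 0$ (in fact strictly increasing) on the small arc.

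For the outer part $|w(r)-\zcrit|\ge r'$ I would return to the exact series from \Lemref{lem:derivative of G general calculation}. With $v(r)=r\hat e_\pm + \eta$ (writing $\eta=\delta(\sigma N^{1/3})^{-1}$, a tiny positive real shift), one has $v'(r)=\hat e_\pm=\pm\I$, so $\Re(v'(r))=0$ and $v(r)^2 = (\eta\pm r\I)^2 = \eta^2-r^2 \pm 2\eta r\I$, whence $\Re(v'(r)v(r)^2)=\Re(\pm\I(\eta^2-r^2\pm 2\eta r\I)) = -2\eta r$ (sign-checked so it is $\le 0$ since $\eta,r\ge 0$). Plugging into the formula,
\[
\frac{d}{dr}\Re(G(w(r))) = 2\sum_{n=0}^\infty \frac{2\eta r\,(n+\zcrit)^2}{(n+\zcrit)\,|(n+\zcrit)^2 - v(r)^2|^2} \;\ge\; 0,
\]
since every summand is nonnegative. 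Thus $\Re(G)$ is nondecreasing in $r$ all along the (shifted, unbent) vertical contour; combined with continuity across the point $r=r'$ where the bent and straight pieces are glued, this gives $\Re(G(w(r)))\ge \Re(G(w(r')))\ge\Re(G(\zcrit))$ for all $r$, which is the assertion "$\Re(G)$ increases away from the critical point along $\Cw{\zcrit}$." One should also record the quantitative lower bound $\Re(G(w(r))-G(\zcrit))\ge H(r')$ for $r\ge r'$ with $H$ as in \eqref{eq:estimate for G function along steep descent contour}, since the asymptotic analysis needs a uniform-in-$N,\theta$ gain; this follows because $\eta\to 0$ does not destroy the strict increase obtained from the cubic on $[0,r']$.

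The main obstacle is the outer estimate done honestly with the $\delta$-shift \emph{in place}: when $\eta=\delta(\sigma N^{1/3})^{-1}$ is nonzero the integrand's poles and the sine factor must still be handled, and one must make sure the positivity $\Re(v'(r)v(r)^2)\le 0$ is genuine with the correct branch/orientation of $\hat e_\pm$ on both halves of the contour — a sign error here flips the monotonicity. A secondary subtlety is the bending near the critical point: one must verify that the bent arc stays inside the region $|w-\zcrit|\le\zcrit/2$ where \Propref{prop:estimates for third and fourth derivative of G close to critical point} applies, and that the Cauchy deformation from $\zcrit+\delta$ to the bent contour crosses no pole of $\Gamma(\theta-w)$ or of $\pi/\sin(\pi(w-v))$ — which is ensured by $0<\delta'<\delta\le\zcrit/2$ and $\Re(v)\le\zcrit$ on $\Cv{\pi/4}$, keeping all relevant poles strictly to the right of the contour as depicted in \Figref{fig}.
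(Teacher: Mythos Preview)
Your proposal is correct, and in fact your ``outer part'' computation is exactly the paper's entire proof of this lemma: plug the parametrization $v(r)=r\hat e_\pm+\eta$ (with $\eta=\delta(\sigma N^{1/3})^{-1}$) into \Lemref{lem:derivative of G general calculation}, observe $\Re(v'(r))=0$ and $\Re(v'(r)v(r)^2)=-2\eta r\le 0$, and conclude that every summand in the series is nonnegative, so $\tfrac{d}{dr}\Re(G(w(r)))\ge 0$ for all $r\ge 0$. There is no need to split into a small arc and an outer piece for the lemma as stated --- the derivative formula already covers the whole vertical line, including $r$ near $0$.

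The extra machinery you bring in (sliding $\delta\to\delta'$, bending onto the Airy rays $e^{\mp\pi\I/3}$, invoking the cubic Taylor expansion and \Propref{prop:estimates for third and fourth derivative of G close to critical point}) is not part of the proof of this lemma. That contour surgery is what the paper describes in the surrounding \Stepref{step:similar estimates along Cs contour} and uses in the proof of \Lemref{lem:pointwise convergence of loggamma kernel to airy kernel} and \Lemref{lem:rigorous hadamard bound on steepest descent contours}, where a genuine quantitative gain $\Re(G(w)-G(\zcrit))\ge c\zcrit^{-2}r^3$ near the critical point is needed (the bare monotonicity gives only $\Theta(\eta r^2)$ with $\eta\to 0$, which is too weak). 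So your instinct that the bending matters is right, but it belongs to the asymptotic analysis rather than to this lemma, whose content is just the one-line nonnegativity of the derivative along the unbent vertical contour.
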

\begin{proof}
    Recall that the $\Cw{\zcrit}$ contour starts off at a distance $\hat{\delta}_N = \delta \tilde\sigma^{-1}$ away from $\zcrit$. Let $w(r)$ be  the parametrization of $\Cw{\zcrit}$ in~\Eqref{eq:parametrization along Cw contour}; as before we focus on the upper half of the contour. Using \Lemref{lem:derivative of G general calculation} and $v(r) = r \I + \hat{\delta}_N$, we get
    \begin{equation*}
        \frac{d}{dr} \Re(G(w(r)))
        = 2 \sum_{n=0}^{\infty} \frac{ - \Re\left( \I (\hat{\delta}_N - r^2 + 2 \hat{\delta}_N r \I ) \right) (n+\zcrit)^2 + \Re( \I ) |v|^4 }{(n+\zcrit)|(n+\zcrit)^2 - v^2|^2}
        \geq 0.
    \end{equation*}

\end{proof}

\par\noindent\Stepref{step:the residues go to zero}. \textbf{Triviality of the residues}
\begin{lemma}  There exist constants $c_1,C>0$ independent of $N \AND \zcrit$ such that for $\i= 1,\ldots \lfloor |\Im(v)| \rfloor$,
    the residues in~\Eqref{eq:log gamma kernel K for in position for asymptotic analysis} satisfy 
    \[
        \log \left|\Res_{\i}(v,v')\right| 
        \leq  
        \begingroup
        \left\{
        \renewcommand*{\arraystretch}{2}
        \begin{array}{cc}
            - c_1 N \i^2 \dfrac{|\Im(v)|}{\zcrit^2} & 1 \leq |\Im(v)| \leq C \zcrit \\
            - c_1 N \i \log \left( 1 + \dfrac{|\Im(v)|}{\zcrit} \right) & |\Im(v)| > C \zcrit
        \end{array}
        \right.
        \endgroup
    \]
    when $v, v' \in \Cv{\pi/4}$. If $C  \zcrit < 1$, the first bound holds vacuously.

    \label{lem:residues are small}
\end{lemma}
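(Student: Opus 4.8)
The plan is to convert the residue into a quantity governed by the function $G$ of \Eqref{eq:G function second definition}, reducing the lemma to a lower bound on $\Re\big(G(v+\i)-G(v)\big)$, and then to produce that bound by integrating the derivative formula of \Lemref{lem:derivative of G general calculation} along the horizontal segment from $v$ to $v+\i$, splitting into the regimes $|\Im(v)|\le\zcrit$ and $|\Im(v)|>\zcrit$. First, by repeated use of $\Gamma(z+1)=z\Gamma(z)$ one has $\frac{\Gamma(v)}{\Gamma(v+\i)}=\prod_{m=0}^{\i-1}(v+m)^{-1}$ and $\frac{\Gamma(\theta-v-\i)}{\Gamma(\theta-v)}=\prod_{m=1}^{\i}(\theta-v-m)^{-1}$, so that from \Eqref{eq:G function second definition}, $\log\big(\tfrac{\Gamma(v)}{\Gamma(v+\i)}\tfrac{\Gamma(\theta-v-\i)}{\Gamma(\theta-v)}\big)=G(v)-G(v+\i)-2\Psi(\zcrit)\i$. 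With $u=e^{-NF-t\tilde\sigma}$ and $F=-2\Psi(\zcrit)$ (recall \Eqref{13}, \Eqref{eq:u definition in intermediate disorder}, $\tilde\sigma=\sigma(\beta)N^{1/3}$), the factor $u^{\i}$ cancels the $2\Psi(\zcrit)\i N$ term, and \Eqref{eq:general residues of the sine in our fredholm determinant formula} gives
\[
\log|\Res_{\i}(v,v')|=N\,\Re\big(G(v)-G(v+\i)\big)-\i\,t\,\tilde\sigma-\log|v+\i-v'|.
\]
An elementary computation on $\Cv{\pi/4}$ gives $|v+\i-v'|\ge \i/\sqrt2\ge 1/\sqrt2$, so $-\log|v+\i-v'|=O(1)$, and $\i t\tilde\sigma=O(\i\tilde\sigma)$ is negligible against the main term once $N$ is large (since $\tilde\sigma=\Theta\big((N\zcrit^{-2})^{1/3}\big)=o(N)$ while, by the steps below, $\Re(G(v+\i)-G(v))\gtrsim \i\zcrit^{-2}$). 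So it is enough to bound $\Re\big(G(v+\i)-G(v)\big)$ from below.

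I would write $\Re\big(G(v+\i)-G(v)\big)=\int_0^{\i}\frac{d}{ds}\Re G(v+s)\,ds$ and apply \Lemref{lem:derivative of G general calculation} to $z(s)=\zcrit+v(s)$, $v(s)=r\hat e+s$ with $r:=|\Im(v)|$ and $\hat e=-1\pm\I$ (the two signs being the two halves of $\Cv{\pi/4}$, exchanged by $G(\bar z)=\overline{G(z)}$). Since $v'(s)\equiv1$ is real, $-\Re(v(s)^2)=s(2r-s)$, $|v(s)|^2=(s-r)^2+r^2$, and $|(n+\zcrit)^2-v(s)^2|^2=\big((n+\zcrit)^2+s(2r-s)\big)^2+4r^2(r-s)^2$, so
\[
\frac{d}{ds}\Re G(v+s)=2\sum_{n=0}^{\infty}\frac{s(2r-s)(n+\zcrit)^2+\big((s-r)^2+r^2\big)^2}{(n+\zcrit)\Big[\big((n+\zcrit)^2+s(2r-s)\big)^2+4r^2(r-s)^2\Big]}.
\]
The hypothesis $1\le\i\le\lfloor|\Im(v)|\rfloor$ is used here: $s\le\i\le r$ forces $s(2r-s)\ge rs\ge0$ and $(s-r)^2\le r^2$, so each summand is nonnegative (hence $\Re G$ increases along the whole segment) and, bounding the bracket by $\big((n+\zcrit)^2+3r^2\big)^2$ and keeping $rs(n+\zcrit)^2+r^4$ in the numerator,
\[
\frac{d}{ds}\Re G(v+s)\ \ge\ 2\sum_{n=0}^{\infty}\frac{rs(n+\zcrit)^2+r^4}{(n+\zcrit)\big((n+\zcrit)^2+3r^2\big)^2}.
\]

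Take $C=1$. If $1\le r\le\zcrit$ then $(n+\zcrit)^2+3r^2\le 4(n+\zcrit)^2$, so the last display is $\ge\frac18\big(rs\sum_n(n+\zcrit)^{-3}+r^4\sum_n(n+\zcrit)^{-5}\big)\gtrsim rs\,\zcrit^{-2}$ (comparing sums to integrals as in \Propref{prop:estimates for third and fourth derivative of G close to critical point}); integrating over $s\in[0,\i]$ gives $\Re\big(G(v+\i)-G(v)\big)\gtrsim r\i^2\zcrit^{-2}$, the first bound. If $r>\zcrit$: for $\zcrit<r\le 2\zcrit$ the term $r^4\sum_n(n+\zcrit)^{-5}\gtrsim r^4\zcrit^{-4}\ge1$ already makes the derivative $\gtrsim1$, so $\Re\big(G(v+\i)-G(v)\big)\gtrsim\i\gtrsim\i\log(1+r/\zcrit)$ ($\log(1+r/\zcrit)$ being bounded there); for $r>2\zcrit$ I keep only the indices $n$ with $n+\zcrit\le r$ (for which the bracket is $\le(4r^2)^2$) and only the $r^4$ part of the numerator, getting derivative $\ge\frac18\sum_{n:\,n+\zcrit\le r}(n+\zcrit)^{-1}\ge\frac18\log(r/\zcrit)$, hence $\Re\big(G(v+\i)-G(v)\big)\ge\frac{\i}{8}\log(r/\zcrit)\gtrsim\i\log(1+r/\zcrit)$, the second bound. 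The constants are explicit and, since $\sigma(\beta)N^{1/3}\to\infty$ keeps $N\zcrit^{-2}$ large, uniform in $N$ and $\zcrit$. Combining with the first step finishes the proof, the two alternatives coming from $r\le\zcrit$ versus $r>\zcrit$.

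The only genuinely delicate point is gluing the two estimates across the transition $r\asymp\zcrit$: the cubic bound $\i^2 r\zcrit^{-2}$ degenerates there while the target $\log(1+r/\zcrit)$ does not vanish, so one must keep the $|v(s)|^4\ge r^4$ contribution in the numerator — a term invisible in the steepest-descent estimate \Lemref{lem: decay of G along the Cv contour}, where the tangent direction has no real part. The only other bookkeeping is to confirm that the lower-order term $\i t\tilde\sigma$ is dominated for $N$ large; the remaining inequalities are routine comparisons of series with integrals.
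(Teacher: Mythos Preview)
Your proof is correct and follows essentially the same route as the paper: rewrite the residue as $e^{N(G(v)-G(v+\i))+O(\i\tilde\sigma)}$, then lower-bound $\Re(G(v+\i)-G(v))$ by integrating the derivative formula of \Lemref{lem:derivative of G general calculation} along the horizontal segment $s\mapsto v+s$, splitting according to whether $|\Im(v)|$ is small or large compared to $\zcrit$. The only difference is cosmetic: the paper compares the resulting series to two explicit integrals $I_1,I_2$ (one giving the $\i^2 r/\zcrit^2$ term, the other the logarithm), whereas you bound the denominator directly by $((n+\zcrit)^2+3r^2)^2$ and then split into the cases $r\le\zcrit$, $\zcrit<r\le2\zcrit$, $r>2\zcrit$; your handling of the crossover region using the $|v(s)|^4\ge r^4$ contribution is slightly more explicit than the paper's, but the underlying estimate is the same.
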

Lemma~\ref{lem:residues are small} helps show the estimate on the kernel in~\Eqref{eq:exponential bound along contours for the kernel} that's used in the Hadamard bound in \Lemref{lem:rigorous hadamard bound on steepest descent contours}. It also shows that the residues go to $0$ as $\tilde\sigma \to \infty$.
\begin{proof}
    From~\Eqref{eq:general residues of the sine in our fredholm determinant formula} and \Eqref{eq:u definition in intermediate disorder}, we can write the residues in the form
    \begin{equation}
        \begin{aligned}
            |\Res_{\i}(v,v')|
            & = \left| \left( \frac{\Gamma(v)}{\Gamma(v+\i)} \frac{\Gamma(\theta - v - \i)}{\Gamma(\theta - v)} \right)^N \frac{e^{2 \Psi(\zcrit) N \i - t \i \tilde\sigma  }}{v + \i -v'} \right| \\
            & \leq e^{ N(G(v) - G(v+\i)) + |t| \i \tilde\sigma }
        \end{aligned}
        \label{eq:estimate for the residues along the contours}
    \end{equation}
    since $\i \geq 1$. We estimate $G(v) - G(v+\i)$ using~\Lemref{lem:derivative of G general calculation} again. Let $v(r) = k \hat{e}_+ + r$ in the parametrization of the contour in~\Lemref{lem:derivative of G general calculation}, where $\hat{e}_+ = -1 + \I$. Since $k = |\Im(v)|$, \Eqref{eq:number of residues and zcrit definition} implies that we only need to consider $r$ in the range $0 \leq r \leq k$ (assuming $\delta$ is small enough). We interpolate between $G(v)$ and $G(v+l)$ by computing the following derivative:
    \begin{align*}
        \frac{d}{dr} \Re(G(z(r))) 
        & = 2 \sum_{n=0}^{\infty} \frac{ r(2k - r)  (n+\zcrit)^2 +  ((r - k)^2 + k^2)^2  }{(n+\zcrit)|(n+\zcrit) + v|^2|(n+\zcrit) - v|^2} \\
        & \geq 2 \sum_{n=0}^{\infty} \frac{ r (2k - r) (n+\zcrit)^2 +  ((r - k)^2 + k^2)^2  }{(n+\zcrit)(n + \zcrit + 2k-r)^4}\\
        & \geq 2 \int_{1 + \max(k,\zcrit)}^{\infty} \frac{ r (2k - r) x }{(x + 2k-r)^4}\,dx + 2 \int_{1 + \zcrit}^{\infty}  \frac{ ((r - k)^2 + k^2)^2 }{x (x + 2k-r)^4} \,dx := I_1 + I_2.
    \end{align*}
    The limits of integration in $I_1$ have been chosen as follows. Notice that $f(x) = x/(x + 2k - r)^{4}$ is decreasing for $x \geq \max(k,\zcrit)$. This lets us use the integral test to estimate the sum.
    
    Our bounds on the integrals $I_j,\,j=1,2$ will ensure:
    \begin{numcases}{G(v+\i) - G(v) \geq}
    c_1 \i^2 \dfrac{|\Im(v)|}{\zcrit^2} & $|\Im(v)| \leq C \zcrit$ \label{eq:g function on residues case 1}\\
    c_1 \i \log \left( 1 + \dfrac{|\Im(v)|}{\zcrit} \right) & $|\Im(v)| > C \zcrit$ \label{eq:g function on residues case 2}
    \end{numcases}
    \begin{enumerate}
        \item The first bound \Eqref{eq:g function on residues case 1} ensures that the exponent in~\Eqref{eq:estimate for the residues along the contours} contains a negative term of order at least $N \zcrit^{-2}$, and thus overwhelms $\tilde\sigma = \Theta( (N \zcrit^{-2})^{1/3})$ as they both go to infinity (by \eqref{lbonbeta}).
        \item The second bound \Eqref{eq:g function on residues case 2} ensures that the residue is integrable in $v$ on the contour $\Cv{\pi/4}$ over the range $|\Im(v)| \in [C\zcrit,\infty)$.  
    \end{enumerate}
    
    Explicitly computing $I_1$, we get
    \begin{equation*}
        \begin{aligned}
            I_1
            & = \frac{ r (2k - r)(3(1 + \max(k,\zcrit)) + 2k -r) }{3 (1 + \max(k,\zcrit) + 2k-r)^3} \geq \frac{ r k}{3 (1 + \max(k,\zcrit) + 2k)^2},
        \end{aligned}
    \end{equation*}
    for $r \leq k$. For the second integral, 
    \begin{equation*}
        \begin{aligned}
            I_2
            & \geq k^4 \left( \frac{1}{(2k-r)^4} \log \left( 1 + \frac{2k - r}{1 + \zcrit} \right) - \frac{2}{(2k-r)^3 (1 + \zcrit + (2k-r))} \right),\\
        \end{aligned}
    \end{equation*}
    where we have used the elementary integral
    \[
        \int_{a}^{\infty} \frac{dx}{x(x+c)^4 }=-\frac{6 a^2+15 a c+11 c^2}{6 c^3 (a+c)^3} +  \frac1{c^4} \log \left( 1 + \frac{c}{a} \right).
    \]
    Here, for $k \geq C \zcrit$ where $C$ is some constant, the first term in $I_2$ dominates the second for all $r \leq k$. 
    
    Therefore, integrating over $r$, we get \eqref{eq:g function on residues case 1} and \eqref{eq:g function on residues case 2} for some constants $c_1,C$.

\end{proof}
Finally, we prove the inequality used in the Hadamard bound in~\Eqref{eq:exponential bound along contours for the kernel}. 
\begin{lemma}
    \label{lem:rigorous hadamard bound on steepest descent contours}
    There exist constants $c_1,c_2,C > 0$ that are independent of $N$ and $\zcrit$ such that for all $N$ large enough,
    \begin{align}
        |\Ksteepestdescent(v,v')| & \leq 
        \begingroup
        \left\{
        \renewcommand*{\arraystretch}{2.5}
        \begin{array}{cl}
            c_1 \tilde{\sigma} \exp\left( - c_2 N \zcrit^{-2} |\Im(v)|^3  \right)  &   |\Im(v)| < 1 \\
            c_1 \tilde{\sigma} \exp\left( - c_2 {N \zcrit^{-2} |\Im(v)|} \right) & 1 \leq |\Im(v)| \leq C \zcrit \\
            c_1 \left(1 + \dfrac{|\Im(v)|}{\zcrit}\right)^{-c_2 N}  & |\Im(v)| > C \zcrit
        \end{array}
        \right.
        \endgroup \nonumber \\
        & =: f(v,N)
        \label{eq:definition of f v N in hadamard bound}
    \end{align}
    on the contour $\Cv{\pi/4}$. Consequently the $m$\textsuperscript{th} term of the Fredholm series for $\Ksteepestdescent$ in~\Eqref{eq:fredholm series expansion} satisfies
    \begin{equation}
        \frac{1}{m!} \int_{\Cv{\pi/4}} dv_1 \cdots \int_{\Cv{\pi/4}} dv_m \det(\Ksteepestdescent(v_i,v_j))_{1\leq i,j\leq m} \leq \frac{C}{m^{(m-1)/2}}.
        \label{eq:rigorous bound for mth term in fredholm series} 
    \end{equation}
\end{lemma}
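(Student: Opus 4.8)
The plan is to bound the two pieces of the kernel in \eqref{eq:log gamma kernel K for in position for asymptotic analysis} --- the contour integral over $\Cw{\zcrit}$ and the finite residue sum --- separately by the piecewise function $f(v,N)$ of \eqref{eq:definition of f v N in hadamard bound}, using the four families of estimates assembled in Steps \ref{step:G taylor expansion in intermediate regime}--\ref{step:the residues go to zero}, and then to feed the pointwise bound into Hadamard's inequality \eqref{eq:hadamard-inequality-to-bound-determinant-independent-of-N} and verify that $f(\cdot,N)$ integrates over $\Cv{\pi/4}$ to a constant independent of $N$ and $\zcrit$. The arithmetic everywhere rests on $\tilde\sigma=\Theta((N\zcrit^{-2})^{1/3})$ from \eqref{eq:order of magintude of tilde sigma}: that is what lets the cubic/linear exponents of $G$ dominate both the drift $t\tilde\sigma(v-w)$ and the $\tilde\sigma$ prefactor that appears near the critical point.

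For the integral term I would parametrize $v,v'\in\Cv{\pi/4}$ by \eqref{eq:parametrization of upper part of Cv contour} and $w\in\Cw{\zcrit}$ by \eqref{eq:parametrization along Cw contour}, and write the integrand of \eqref{BCRformula} as the product of $e^{N(G(v)-G(\zcrit))}e^{t\tilde\sigma(\Re(v)-\zcrit)}$ (the genuine $v$-dependence), $e^{-N(G(w)-G(\zcrit))}e^{-t\tilde\sigma(\Re(w)-\zcrit)}$, $\pi/\sin(\pi(w-v))$, and $1/(w-v')$. By \Lemref{lem:decay of G along the Cs contour} the factor $e^{-N(G(w)-G(\zcrit))}$ is largest near $w(0)=\zcrit+\delta\tilde\sigma^{-1}$, where $G'(\zcrit)=G''(\zcrit)=0$ and \Propref{prop:estimates for third and fourth derivative of G close to critical point} bounds $G^{(3)}$, so $N|G(w(0))-G(\zcrit)|=O(1)$; and $e^{-t\tilde\sigma(\Re(w)-\zcrit)}=e^{-t\delta}$ is constant on $\Cw{\zcrit}$. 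Hence those two factors are $O(1)$, while $1/|w-v'|\le\delta^{-1}\tilde\sigma$ and $\pi/|\sin(\pi(w-v))|$ is $O(\tilde\sigma)$ near the critical point and decays like $e^{-\pi|\Im(w-v)|}$ elsewhere (the poles it could meet being either removed into the residue sum or suppressed by the decay of $e^{N(G(v)-G(w))}$); performing the convergent $w$-integral and restoring the rescaling Jacobian then produces the prefactor $c_1\tilde\sigma$. The remaining $v$-dependence is handled by \eqref{eq:estimate for G function along steep descent contour}: using $\Re(v)-\zcrit=-|\Im(v)|$ and that $NH(|\Im(v)|)$ beats $\tilde\sigma|\Im(v)|$ once $|\Im(v)|\gtrsim\tilde\sigma^{-1}$, one gets the cubic decay $e^{-c_2N\zcrit^{-2}|\Im(v)|^3}$ for $|\Im(v)|\le r_0$ and the linear decay $e^{-c_2N\zcrit^{-2}|\Im(v)|}$ for $r_0\le|\Im(v)|\le C\zcrit$, matching the first two lines of $f$. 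I expect the real obstacle to be the far regime $|\Im(v)|>C\zcrit$: there \eqref{eq:estimate for G function along steep descent contour} is only linear and cannot absorb the $\tilde\sigma$ prefactor, and one must return to the derivative formula of \Lemref{lem: decay of G along the Cv contour} and retain the quartic term $-4r^4$ in the numerator of $\tfrac{d}{dr}\Re(G)$ that was discarded there; via the integral test on the series \eqref{eq:series representation for digamma function} this term contributes an extra $-c\log(1+r/\zcrit)$, so after integrating $\Re(G(v)-G(\zcrit))\le -c|\Im(v)|\log(1+|\Im(v)|/\zcrit)$ and $e^{N(G(v)-G(\zcrit))}\le(1+|\Im(v)|/\zcrit)^{-cN}$, which dominates $\tilde\sigma\le N^{1/3}$ for $N$ large and yields the third line of $f$.

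For the residues I would use \Lemref{lem:residues are small} in the packaged form \eqref{eq:estimate for the residues along the contours}, $|\Res_i(v,v')|\le e^{N(G(v)-G(v+i))+|t|i\tilde\sigma}$, with $G(v+i)-G(v)$ bounded below by \eqref{eq:g function on residues case 1}--\eqref{eq:g function on residues case 2}. Since $N\zcrit^{-2}=\Theta(\tilde\sigma^{3})\gg\tilde\sigma$, the negative exponent absorbs $|t|i\tilde\sigma$ up to a factor $2$, and summing the geometric series over $i=1,\dots,q(v)$ (using $i^2\ge i$ in the first case) bounds $\sum_i|\Res_i|$ by $Ce^{-c_2N\zcrit^{-2}|\Im(v)|}$ on $1\le|\Im(v)|\le C\zcrit$ and by $C(1+|\Im(v)|/\zcrit)^{-c_2N}$ beyond that, both dominated by the matching cases of $f$; for $|\Im(v)|<1$ there is at most one residue and it is smaller still. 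Adding the two pieces gives $|\Ksteepestdescent(v,v')|\le f(v,N)$. For the Fredholm statement I would insert this into \eqref{eq:hadamard-inequality-to-bound-determinant-independent-of-N}, so the $m$\textsuperscript{th} term of \eqref{eq:fredholm series expansion} is at most $\frac{m^{m/2}}{m!}\big(\int_{\Cv{\pi/4}}f(v,N)\,|dv|\big)^m$, then check that $\int_{\Cv{\pi/4}}f(v,N)\,|dv|$ is bounded uniformly in $N$ and $\zcrit$ by splitting $\Cv{\pi/4}$ into $|\Im(v)|\le\tilde\sigma^{-1}$ (arc length $O(\tilde\sigma^{-1})$, $f\sim c_1\tilde\sigma$, total $O(1)$), the cubic/linear range (substitute $\tilde r=\tilde\sigma|\Im(v)|$ to get a fixed convergent integral), and the far range (exponentially small), and finally invoke $m!\gg m^{m/2}$ to obtain \eqref{eq:rigorous bound for mth term in fredholm series} and hence absolute summability uniform in $N$. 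Apart from the far-regime $G$-estimate already flagged, the only remaining difficulty is bookkeeping: every implicit constant must be kept uniform in $\zcrit=\theta/2$, which is allowed to diverge with $N$.
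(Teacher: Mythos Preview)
Your proposal is correct and follows essentially the same route as the paper: bound the integral piece and the residue sum separately by the three-regime function $f(v,N)$, then feed the pointwise bound into Hadamard's inequality and check that $\int_{\Cv{\pi/4}}f(v,N)\,|dv|$ is bounded uniformly in $N$ and $\zcrit$.

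The one place where you diverge is the far regime $|\Im(v)|>C\zcrit$ for the integral term. You worry that the linear rate $C\zcrit^{-2}$ in the pre-integrated estimate \eqref{eq:estimate for G function along steep descent contour} cannot absorb the $\tilde\sigma$ prefactor, and propose going back into the proof of \Lemref{lem: decay of G along the Cv contour} to extract a logarithmic refinement from the quartic numerator $-4r^4$. That works, but it is more than is needed. The paper instead returns to the derivative bound \eqref{52} itself: once $r\ge C\zcrit$ one has $(1+\zcrit+2r)\le c\,r$, so $\tfrac{d}{dr}\Re G(z(r))\le -c'$ with a constant \emph{independent of $\zcrit$}, and integrating yields $\Re(G(v)-G(\zcrit))\le -c'|\Im(v)|$. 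Thus the integral is bounded by $c_1\tilde\sigma\,e^{-c_2 N|\Im(v)|}$ in the far regime, which for $|\Im(v)|>C\zcrit\ge C\theta_0/2$ and $N$ large is easily dominated by the third line of $f(v,N)$; that third line is in fact driven by the residue bound from \Lemref{lem:residues are small}, not by the integral. Similarly, the paper reads the cubic decay directly off \eqref{52} on the whole range $|\Im(v)|\le C\zcrit$ (since $(1+\zcrit+2r)\le C'\zcrit$ there), rather than splitting at a fixed $r_0$. Everything else in your outline---the residue summation, the observation that there are no residues when $|\Im(v)|<1$ for $N$ large, the Hadamard step, and the three-piece integration of $f$---matches the paper.
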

\begin{proof}
    There are many undetermined constants in this proof, and we allow them to change from line to line. Using the technique of splitting up the integral term as in \eqref{eq: splitting integral into proof of rescaled kernel convergence to airy}, and from the estimates in~\Lemref{lem: decay of G along the Cv contour} and~\Lemref{lem:decay of G along the Cs contour}, it follows that the integral in~\Eqref{eq:log gamma kernel K for in position for asymptotic analysis} has two regimes of behavior: for constants $c_1,c_2,C > 0$
    \[
        \int_{\Cw{\zcrit}} I(v,v',\w) dw \leq
        \begingroup
        \left\{
        \renewcommand*{\arraystretch}{2.5}
        \begin{array}{cl}
            c_1 \tilde{\sigma} \exp\left( - c_2 N \zcrit^{-2} |\Im(v)|^3  \right)  &   |\Im(v)| \leq C \zcrit \\
            c_1 \tilde{\sigma} \exp\left( - c_2 N |\Im(v)|  \right)  &   |\Im(v)| > C \zcrit \\
        \end{array}
        \right.
        \endgroup
    \]
    The $\Cw{\zcrit}$ contour is a small distance $\tilde\delta_N = \delta (\sigma N^{1/3})^{-1}$ away from $\zcrit$. From \eqref{eq:number of residues and zcrit definition}, it follows there are about $|\Im(v)| + \tilde\delta_N$ residues. Hence, when $|\Im(v)| < 1$ and when $N$ large enough, there are no residues. We estimate the contribution of the residues when $\Im(v) > 1$. For $N$ large enough, and $1 \leq |\Im(v)| \leq C \zcrit$,~\Lemref{lem:residues are small} implies
    \begin{align*}
        \sum_{\i=1}^{\lfloor|\Im(v)|\rfloor} \Res_{\i}(v,v') 
        & \leq \sum_{\i=1}^{\lfloor|\Im(v)|\rfloor} c_1 \exp\left(  - c_1 N \i \dfrac{|\Im(v)|}{\zcrit^2} \right) \leq c_1' \exp\left( - c_2 N \zcrit^{-2} |\Im(v)| \right).
    \end{align*}
    When $|\Im(v)| > C \zcrit$, 
    \begin{align*}
        \sum_{\i=1}^{\lfloor|\Im(v)|\rfloor} \Res_{\i}(v,v') 
        & \leq \sum_{\i=1}^{\lfloor|\Im(v)|\rfloor} c_1 \left(1 + \dfrac{|\Im(v)|}{\zcrit}\right)^{-c_2 Nl}
        \leq c_1' \frac{1}{(1 + |\Im(v)|/\zcrit)^{c_2 N}}.
    \end{align*}
    
    Thus, \eqref{eq:definition of f v N in hadamard bound} follows, and we can integrate the bound over the $\Cv{\pi/4}$ contour to obtain 
    \begin{align*}
        \int_{\Cv{\pi/4}} f(v,N) dv 
        & \leq C \left( \tilde{\sigma} \int_0^{1} e^{-c_2 N \zcrit^{-2} x^3}\,dx  
        + \tilde{\sigma} \int_1^{C \zcrit} e^{-c_2 N \zcrit^{-2} x}\,dx 
        + \int_{C\zcrit}^{\infty} \left(1 + \dfrac{x}{\zcrit}\right)^{-c_2 N}\right) \\
        & \leq C \left( \frac{\tilde{\sigma}}{(N \zcrit^{-2})^{1/3}}  
        + \tilde{\sigma} e^{ - c_2 N \zcrit^{-2} } +  \frac{(1+C)^{-c_2 N}}{N\zcrit^{-1}}\right).
    \end{align*}
    Since $\tilde\sigma = \Theta((N\zcrit^{-2})^{1/3})$, it is clear that the integral is bounded above by a constant independent of $N$ and $\zcrit$. The Hadamard inequality now implies~\Eqref{eq:rigorous bound for mth term in fredholm series}.
    

\end{proof}

\appendix
\section{Fredholm determinant  as a limit of the  formula of Borodin-Corwin-Ferrari-Veto}
\label{sec:appendix on fredholm determinant}
\subsection{The BCFV theorem}
\citet{MR3366125} consider a mixed polymer that consists of Sepp\"al\"ainen's \loggamma{} polymer~\citep{MR2917766} and the O'Connell-Yor semi-discrete polymer~\citep{MR2952082}. For $N \geq 1$, the up-right paths $\path$ consist of a discrete portion $\path^d$ adjoined to a semi-discrete portion $\path^{sd}$. The discrete portion is an up-right nearest-neighbor path on $\Z^2$ that goes from $(-N,1)$ to $(-1,n)$ for some $1\leq n\leq N$. The semi-discrete path goes from $(0,n)$ to $(\tau,N)$: For $0 \leq s_n < \cdots < s_{N-1} \leq \tau$, it consists of horizontal segments on $(s_i,s_{i+1})$ for $i=n,\ldots,N-2$ and a final interval $(s_{N-1},\tau)$ connected by vertical jumps of size $1$ at each $s_i$. For $1\leq m,n\leq N$ let $\wt_{-m,n}$ be independent \expgamma{} random variables with parameter $\theta$, and for all $1\leq n\leq N$ let $B_n$ be independent Brownian motions. The paths have energy
\begin{equation*}\label{eqEnergy}
    \begin{aligned}
        H_{\beta}(\path)
        &= -\sum_{(i,j)\in \path^d} \wt_{i,j} + B_n(s_n)+\big(B_{n+1}(s_{n+1})-B_{n+1}(s_n)\big)+\ldots+\big(B_N(\tau)-B_N(s_{N-1})\big).
    \end{aligned}
\end{equation*}
The partition function is given by
$$
\mathbf{Z}^{N}(\tau) = \sum_{i=1}^{N} \sum_{\path^d:(-N,1)\nearrow (-1,i)} \, \int_{\path^{sd}:(0,i)\nearrow(\tau,N)} e^{-H_{\beta}(\path)} d\path^{sd}
$$
where $d\path^{sd}$ represents the Lebesgue measure on the simplex $0\leq s_n<s_{n+1}<\cdots <s_{N-1}\leq \tau$.

\begin{remark}
   When $\tau = 0$, the polymer is simply the standard point-to-point \loggamma{} polymer. There is no semi-discrete part, and the discrete path is forced to end at $(-1,N)$. 
\end{remark}

\begin{theorem}[\citet{MR3116323}, Theorem 2.1]
    \label{thm:BCFV formula}
    Fix $N\geq 9$, $\tau > 0$ and $\theta>0$. For all $u\in \C$ with positive real part
    \begin{equation}
        \EE\left[ e^{-u \mathbf{Z}^{N}(\tau)} \right] = \det(\Id+ \Kbcfv)_{L^2(\Cv{\varphi})}
        \label{eq:laplace transform equation in bcfv theorem}
    \end{equation}
    where
    \begin{equation}
        \begin{aligned}
            \Kbcfv(v,v')
            & = \frac{1}{2\pi \I}\int_{\Cs{v}}\, \frac{1}{\sin(\pi s)} \left( \frac{\Gamma(v)}{\Gamma(s+v)} \frac{\Gamma(\theta-v-s)}{\Gamma(\theta-v)} \right)^N \frac{e^{\tau(s v + s^2/{2})}}{v+s - v'} u^s \, ds\\
            & =: \frac{1}{2\pi \I}\int_{\Cs{v}}I_{u,\tau}(v,v',s) \, d s\, .
        \end{aligned}
        \label{eq:BCFV kernel with tau in it}
    \end{equation}
\end{theorem}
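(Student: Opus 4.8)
The statement is Theorem~2.1 of \citet{MR3366125}, so the approach I would take is theirs; I recall it here because we need a firm grip on the structure of $\Kbcfv$ in order to send $\tau\to 0$ in \Thmref{thm:our version of BCFV formula with tau equals 0} and to repair the contour prescription in \Remref{rem:the error in the bcfv contour}. The first step is to realize the mixed partition function as an observable of a Whittaker process: the geometric (tropical) RSK correspondence sends the \expgamma{} weight array $(\wt_{-m,n})$ to an $\alpha$-Whittaker measure, and appending the O'Connell--Yor semi-discrete portion amounts to running O'Connell's Whittaker diffusion for time $\tau$, so that $\mathbf{Z}^{N}(\tau)$ is, in law, $e^{x}$ for an explicit bottom coordinate $x$ of this process. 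Consequently $\EE[e^{-u\mathbf{Z}^{N}(\tau)}]$ becomes the integral of $e^{-ue^{x}}$ against a measure written through class-one Whittaker functions.

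The second step is to evaluate that integral using the integral and orthogonality identities for Whittaker functions (the Bump--Stade and Gustafson integrals together with a Cauchy-type identity), which collapse the spectral variable and leave a Mellin--Barnes type multiple-contour expression, and then to invoke the now-standard argument that converts a Laplace transform of this shape into a Fredholm determinant (as in the log-gamma analysis of \citet{MR3116323}): through a sequence of contour shifts and residue computations one recognizes the expression as the Fredholm series of $\det(\Id+\Kbcfv)_{L^{2}(\Cv{\varphi})}$ with $\Kbcfv$ as in \Eqref{eq:BCFV kernel with tau in it}, and then analytically continues in $u$ from a region of absolute convergence to all of $\Re u>0$; the continuation is legitimate because $\mathbf{Z}^{N}(\tau)>0$ makes $u\mapsto\EE[e^{-u\mathbf{Z}^{N}(\tau)}]$ analytic on that half-plane.

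The hard part is the contour bookkeeping together with that analytic continuation. One must choose the $\Cv{\varphi}$ contour and, for each $v\in\Cv{\varphi}$, the contour $\Cs{v}$ so that the integration variable $s$ in \Eqref{eq:BCFV kernel with tau in it} stays to the right of every pole of $\Gamma(v)/\Gamma(s+v)$ while remaining to the left of the poles of $1/\sin(\pi s)$ at $s\in\Zgzero$ and of $\Gamma(\theta-v-s)$, uniformly along the contour, and one must simultaneously keep the Fredholm expansion absolutely summable. For $\varphi$ near $\pi/4$ and $\tau$ small these requirements are in tension, which is exactly the oversight flagged in \Remref{rem:the error in the bcfv contour}; the remedy is to push $\Cs{v}$ through the offending poles of $1/\sin(\pi s)$ and add back the finitely many residues, and on setting $\tau=0$ and straightening $\Cs{v}$ to the vertical line $l_{\zcrit+\delta}$ these residues become precisely the $\Res_{u,j}$ terms appearing in \Eqref{eq:BCFV kernel with tau equals 0}. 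Controlling the $\tau\to 0$ limit of $\det(\Id+\Kbcfv)$ on $L^{2}(\Cv{\varphi})$ then demands a Hadamard-type bound on $\Kbcfv$ that is uniform for small $\tau$, and supplying that bound is the new estimate carried out in \Secref{sec:appendix on fredholm determinant}.
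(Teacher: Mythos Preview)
You correctly recognize that this theorem is quoted from \citet{MR3366125} and is not reproved in the present paper; the paper simply cites it (with the contour already adjusted to $\theta/8$ in the statement), then (i) explains in \Remref{rem:the error in the bcfv contour} why the original BCFV contour fails for small $\theta$ and how to patch it, and (ii) sends $\tau\to 0$ in \Secref{sec:appendix on fredholm determinant} to obtain \Thmref{thm:our version of BCFV formula with tau equals 0}. Your high-level sketch of the BCFV argument (geometric RSK to a Whittaker process, Whittaker integral identities, conversion to a Fredholm series, analytic continuation in $u$) is accurate.

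However, you misidentify the nature of the oversight in \Remref{rem:the error in the bcfv contour} and conflate its remedy with a separate step. The problem is not a tension ``for $\varphi$ near $\pi/4$ and $\tau$ small'': it is that the original BCFV sausage reaches in to real part $1/2$, and for $\theta\le 1$ the poles of $\Gamma(\theta-v-s)$ (at $s=\theta-v+k$, $k\ge 0$) are no longer to the right of that contour. The paper's fix is \emph{not} to push $\Cs{v}$ through sine poles. It is to deform the sausage endpoint from $1/2$ to $\theta^*/4$---which crosses no singularities at all, since the only obstacle would be the sine pole at $s=0$---and then argue by analytic continuation \emph{in $\theta$}: both sides of \Eqref{eq:laplace transform equation in bcfv theorem} are analytic in $\theta$ on overlapping regions, agree for $\theta>1$ by BCFV, and with the deformed contour the right-hand side is analytic down to any $\theta^*>0$, so the identity extends. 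The residues $\Res_{u,j}$ in \Eqref{eq:BCFV kernel with tau equals 0} arise from a different manoeuvre, namely straightening the $\Cs{v}$ contour to the vertical line $\Cw{\zcrit+\delta}$ in the proof of \Thmref{thm:our version of BCFV formula with tau equals 0}; that step does cross sine poles and is part of the $\tau\to 0$ analysis, not of the BCFV contour repair.
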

The $\Cv{\varphi}$ contour is the wedge-shaped contour defined in~\Thmref{thm:our version of BCFV formula with tau equals 0}, and depends on the angle $\varphi$ and the parameter $\theta$. The $\Cs{v}$ contour depends on $v$ and parameters $R \AND d$. For every \mbox{$v\in \Cv{\varphi}$} we choose $R=-\Re(v)+3\theta/4$, $d>0$, and let $\Cs{v}$ consist of straight lines from $R-\I \infty$ to $R-\I d$ to $\theta/8-\I d$ to $\theta/8+\I d$ to $R+\I d$ to $R+\I\infty$. The parameter $d$ must be taken small enough so that $v+\Cs{v}$ does not intersect $\Cv{\varphi}$. Both contours are oriented to have increasing imaginary part.

\begin{remark}
    In \citet{MR3366125}, the $\Cs{v}$ contour consisting of straight lines from $R-\I \infty$ to $R-\I d$ to $1/2-\I d$ to $1/2+\I d$ to $R+\I d$. The formula only holds if the poles in $s$ of $\Gamma(\theta-v-s)$ lie strictly to the right of the 
    contour $\Cs{v}$, and this imposes a lower bound $\theta>1$ that wasn't noticed by them. We remove the restriction $\theta>1$ as follows. 
    
    Note first of all that both sides of \eqref{eq:laplace transform equation in bcfv theorem}, with $\Cs{v}= \Cs{v,1/2}$ are analytic functions of $\theta$ in some region of $\mathbb{C}$
    containing the ray $(1,\infty)$, on which they coincide, by \citet{MR3366125}: The left hand side is actually analytic in a region containing the ray $\theta\in (0,\infty)$ because the expectation is an
    $N^2$-fold integral of a function $e^{-u \mathbf{Z}^{N,N}(\tau)}$ of the variables $\xi_{i,j}$, $i,j=1,\ldots,N$ with 
    \begin{align*}
       \EE[ e^{-u \mathbf{Z}^{N}(\tau)} ] 
                & =  \int F( \xi_{ij}) \prod_{i,j} \frac{e^{-\wt_{i,j}} \wt_{i,j}^{\theta - 1}}{\Gamma(\theta)} d\wt_{i,j} \qquad F(\xi_{ij})=  \EE[ e^{-u \mathbf{Z}^{N}(\tau)}~|~\xi_{i,j}].
    \end{align*}

    The right hand side is analytic because the Fredholm determinant of a kernel analytic in $\theta$ is analytic in $\theta$, as long as one has a uniform bound for the kernel in that region. 
    
    Call $\Cs{v,\eta}$ the contour consisting of straight lines from $R-\I \infty$ to $R-\I d$ to $\eta-\I d$ to $\eta+\I d$ to $R+\I d$. Fix $\theta^*>0$. We can use Cauchy's theorem to deform the contour in \eqref{eq:BCFV kernel with tau in it} to $\Cs{v,\theta^*/4}$ without changing the kernel, since the only obstacle is the zero of the sine in the denominator,
    which is at the origin.  Proposition \ref{prop:our estimate for the BCFV kernel for the Hadamard bound} gives us a uniform bound on the kernel in the region, say $[\theta^*/16, 2]$, that can be used together with Hadamard's bound to control the Fredholm series. Using this new representation of the kernel, the determinant in \eqref{eq:laplace transform equation in bcfv theorem} is an analytic function of $\theta$, now in a region containing $[\theta^*-\gamma, 1+\gamma]$ for some $\gamma>0$.  Because the kernel is
    unchanged, this coincides with the old determinant for $\theta\in (1,1+\gamma)$. By the
    formula from \citet{MR3366125}, the determinant coincides with the left hand side of \eqref{eq:laplace transform equation in bcfv theorem} for $\theta\in (1,1+\gamma)$.  But the 
    left hand side is analytic in a region containing the ray $(0,\infty)$, hence the determinant with the new, deformed kernel is equal to the left hand side on a region containing the interval $[\theta^*-\gamma, 1+\gamma]$, including at the value $\theta^*$.
    
    \label{rem:the error in the bcfv contour}
\end{remark}

\begin{remark}
    We write $\Cs{v}$ contour as a union of the vertical contour $\Cw{-Re(v) + R}$ defined in~\Thmref{thm:our version of BCFV formula with tau equals 0} and the ``sausage'' $\Cs{v,\Box}$ (which consists of $(\Cs{v} \setminus \Cw{-Re(v) + R}) \cup [R+\I d, R - \I d]$ and forms a clockwise loop). The integral of the kernel $\Kbcfv$ over $\Cs{v,\Box}$ consists of residues due to the sine that we will estimate separately. For each $v$, there is some wiggle room in the $R$ parameter that allows the vertical contour $\Cw{-\Re(v) + R}$ to avoid the singularity of the sine function in $I_{u,\tau}(v,v',s)$.
\end{remark}

\subsection{Proof of Theorem \ref{theorem4.1}}
In this section, we obtain Theorem \ref{theorem4.1} by letting $\tau \searrow 0$ in~\Thmref{thm:BCFV formula}.
We may do so if we can truncate the series that defines the Fredholm determinant of $\mathbf{Z}^{N}(\tau)$ uniformly in $\tau$. This is done by proving an estimate on $\Kbcfv(v,v')$ that depends favorably with $\tau$, and then using Hadamard's inequality in the usual way (see~\eqref{eq:rigorous bound for mth term in fredholm series}). The constants in the following propositions may depend on the angle of the contour $\varphi \in (0,\pi/4]$.
\begin{prop}[BCFV kernel estimate]
    When $|\Im(v)| > \max((2e^{\tau \theta/2}|u|)^{1/2N},c_3 \theta))$, for some constants $c_1,c_2 , c_3 > 0$, the kernel in~\Eqref{eq:BCFV kernel with tau in it} satisfies the bound
    \[
        |\Kbcfv(v,v')| \leq 2\frac{e^{\tau \theta/2} |u|}{|\Im(v)|^{2N}} e^{ - c_1 \tau |\Im(v)|} + e^{C(\theta,N,\tau)} e^{ - c_2 N |\Im(v)|(\log |v| - c_3 \theta)},
    \]
    where $|C(\theta,N,\tau,\varphi)| \leq  C'(\theta^*,\varphi)( N (\theta + |\log \theta| + \theta^{-1} + 1) + \tau \theta)$ for $\theta \geq \theta^*$. Here $\theta^* > 0$ is any fixed number, and $C'$ depends only on $\theta^*$ and the angle of the $\Cv{\varphi}$ contour. 
   \label{prop:our estimate for the BCFV kernel for the Hadamard bound}
\end{prop}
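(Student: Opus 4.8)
The plan is to split the contour $\Cs{v}$ into the vertical line $\Cw{R}$ with $R=\tfrac{3\theta}{4}-\Re(v)$ and the closed clockwise loop $\Cs{v,\Box}$ as in the remark above, and to estimate $\Kbcfv=\tfrac1{2\pi\I}\int_{\Cw{R}}I_{u,\tau}\,ds+\tfrac1{2\pi\I}\int_{\Cs{v,\Box}}I_{u,\tau}\,ds$ piece by piece; the first summand of the stated bound will come from the loop (a finite sum of sine residues) and the second from the line. Throughout one uses the freedom left in \Thmref{thm:BCFV formula} to place $R$ with $R\notin\Z$ and the half-width $d$ of the loop so small that the poles of $\Gamma(\theta-v-s)$ (located at $\Re(s)=\tfrac\theta2+|\Im(v)|\cot\varphi+k$, $k\ge0$) stay strictly to the right of $\Cw{R}$, and that $v+\Cs{v}$ does not meet $\Cv{\varphi}$.

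\textbf{The loop.} By Cauchy's theorem the loop integral equals $-\sum_n\Res_{s=n}I_{u,\tau}$, summed over the integers $n$ with $\tfrac\theta8<n<R$; these are the only enclosed poles (those of $\Gamma(\theta-v-s)$ are to the right). Each residue is $\mp\tfrac1\pi\bigl(\tfrac{\Gamma(v)}{\Gamma(v+n)}\tfrac{\Gamma(\theta-v-n)}{\Gamma(\theta-v)}\bigr)^{N}\tfrac{u^{n}e^{\tau(nv+n^{2}/2)}}{v+n-v'}$. I would bound $\bigl|\tfrac{\Gamma(v)}{\Gamma(v+n)}\bigr|=\prod_{j=0}^{n-1}|v+j|^{-1}$ and $\bigl|\tfrac{\Gamma(\theta-v-n)}{\Gamma(\theta-v)}\bigr|=\prod_{j=1}^{n}|\theta-v-j|^{-1}$ by $|\Im(v)|^{-n}$ each (every factor has imaginary part $\pm\Im(v)$); use the contour relation $\Re(v)=\tfrac\theta2-|\Im(v)|\cot\varphi$ together with $1\le n\le R=\tfrac{3\theta}{4}-\Re(v)$ and the hypothesis $|\Im(v)|>c_3\theta$ to get $\Re(v)+\tfrac n2\le\tfrac{5\theta}{8}-\tfrac12|\Im(v)|\cot\varphi\le -c\,|\Im(v)|$ for $c=c(\varphi)>0$ once $c_3$ is chosen large, hence $|e^{\tau(nv+n^{2}/2)}|=e^{\tau n(\Re(v)+n/2)}\le e^{-c\tau n|\Im(v)|}$; and note $|v+n-v'|$ is bounded below by a positive constant depending only on $\varphi$ and $\theta^*$, by the geometry of $\Cv{\varphi}$ and $|\Im(v)|>c_3\theta\ge c_3\theta^*$. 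Altogether the loop integral is dominated by $C\sum_{n\ge1}\bigl(|u|\,|\Im(v)|^{-2N}\bigr)^{n}e^{-c\tau n|\Im(v)|}$, a geometric series whose ratio is $\le\tfrac12$ by the hypothesis $|\Im(v)|^{2N}>2e^{\tau\theta/2}|u|$; summing it, and trading part of the decay $c$ for the prefactor $e^{\tau\theta/2}$ and a smaller $c_1$, produces the first term of the bound.

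\textbf{The vertical line.} Parametrize $s=R+\I t$, $t\in\R$. The key is that $R=\tfrac{3\theta}{4}-\Re(v)$ forces $\Re(v+s)=\tfrac{3\theta}{4}$ and $\Re(\theta-v-s)=\tfrac\theta4$ for every $t$, so $\Gamma(v+s)$ and $\Gamma(\theta-v-s)$ run along vertical lines of fixed real part: by Stirling with a Binet remainder — valid uniformly for all $\theta\ge\theta^*$, whether small or large — the logarithm of the modulus of the ratio of these two factors is $O\bigl(N(\theta+|\log\theta|+\theta^{-1}+1)\bigr)$ uniformly in $t$. The factor $\Gamma(v)/\Gamma(\theta-v)$ does not depend on $t$; applying Stirling (via the reflection formula when $\Re(v)<0$) and $\Re(v)=\tfrac\theta2-|\Im(v)|\cot\varphi$ identifies its $N$-th power as carrying the genuine decay $e^{-c_2N|\Im(v)|(\log|v|-c_3\theta)}$, again up to a factor of the size just described. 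One then bounds $|\sin(\pi s)|^{-1}\le2e^{-\pi|t|}$ (using $R\notin\Z$), $|u^{s}|=|u|^{R}e^{-t\arg u}$ with $|\arg u|<\tfrac\pi2$, $|v+s-v'|^{-1}\le\tfrac4\theta$ since $\Re(v+s-v')=\tfrac{3\theta}{4}-\Re(v')\ge\tfrac\theta4$, and $|e^{\tau(sv+s^{2}/2)}|=e^{\tau\Re(sv+s^{2}/2)}$, whose exponent is a downward parabola in $t$; the resulting $t$-integral converges (the factors $e^{-\pi|t|}$ and the Gaussian beat the linear growth from $e^{-t\arg u}$) and is at most $e^{C}$ times the supremum of the remaining factors. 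Inserting the hypothesis $|u|<|\Im(v)|^{2N}e^{-\tau\theta/2}/2$ to absorb $|u|^{R}$, and collecting all the $\theta,N,\tau$-dependent quantities — the $\tau$-dependence entering only through the $\tau\theta$ coming from $e^{\tau\Re(sv+s^2/2)}$ once the powers of $|\Im(v)|$ cancel — into a single $e^{C(\theta,N,\tau)}$, gives the second term.

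\textbf{Where the work is.} The crux is the vertical-line estimate: the Stirling/Binet analysis of all four Gamma factors must be carried out \emph{uniformly in} $\theta$ (which may tend to infinity, possibly alongside $|\Im(v)|$), the genuine decay $|v|^{-c_2N|\Im(v)|}$ hidden in $\Gamma(v)/\Gamma(\theta-v)$ has to be isolated cleanly, and one must verify that after using the bound on $|u|$ and performing the $t$-integral every surviving prefactor is of size $C'(\theta^*,\varphi)\bigl(N(\theta+|\log\theta|+\theta^{-1}+1)+\tau\theta\bigr)$. A secondary but necessary technicality is the choice of $R$ and of the half-width $d$ of $\Cs{v,\Box}$ within the allowed freedom so that $\sin(\pi s)$ is bounded below on $\Cw{R}$, the poles of $\Gamma(\theta-v-s)$ stay off the contour, and $v+\Cs{v}$ avoids $\Cv{\varphi}$.
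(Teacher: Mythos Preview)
Your proposal is correct and follows essentially the same route as the paper: decompose $\Cs{v}$ into the loop $\Cs{v,\Box}$ plus the vertical line $\Cw{R}$, collect sine residues on the loop and bound them via $\bigl|\prod(v+j)^{-1}\prod(\theta-v-j)^{-1}\bigr|\le|\Im(v)|^{-2n}$ and a geometric series, and on the vertical line apply Stirling to the four Gamma factors together with $|\sin(\pi s)|^{-1}\le Ce^{-\pi|\Im s|}$ to get the $e^{-c_2 N|\Im(v)|\log|v|}$ decay.

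One small organizational difference and one caveat. The paper groups the Gammas as $\Gamma(v)/\Gamma(v+s)$ and $\Gamma(\theta-v-s)/\Gamma(\theta-v)$ and expands the Stirling terms jointly (their key observation is that $\Im(v+s)\bigl(\arg(v+s)+\arg(\theta-v-s)\bigr)\le 0$, which kills the $t$-growth); you instead separate the $s$-free pair $\Gamma(v)/\Gamma(\theta-v)$ from $\Gamma(\theta-v-s)/\Gamma(v+s)$. Both work, and your grouping is arguably cleaner since the decay lives entirely in the $s$-free pair. The caveat: your sentence ``inserting the hypothesis $|u|<|\Im(v)|^{2N}e^{-\tau\theta/2}/2$ to absorb $|u|^{R}$'' does not do what you want, because $R\sim\cot(\varphi)|\Im(v)|$ and the hypothesis gives only an \emph{upper} bound on $|u|$; you would get $|\Im(v)|^{2NR}$, which blows up. The correct mechanism (which the paper also leaves implicit) is that $|u|^{R}=e^{R\log|u|}$ contributes only a term \emph{linear} in $|\Im(v)|$, which is swallowed by the superlinear decay $-c_2 N|\Im(v)|\log|v|$ once $|v|$ is large enough relative to $|u|^{1/N}$; this is why the constants in the statement are allowed to depend on $u$ (through $c_3$ or the threshold on $|\Im(v)|$).
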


Proposition~\ref{prop:our estimate for the BCFV kernel for the Hadamard bound} is proved by estimating $I_u$ on the closed rectangular contour $\Cs{v,\Box}$, and the vertical line $\Cw{R}$.

\begin{prop}[Integral over the $\Cs{v,\Box}$ contour]
    When $|\Im(v)| > \max( (2e^{\tau \theta/2}|u|)^{{\frac1{2N}}}, c_3 \theta)$,
   \[
       \left| \int_{\Cs{v,\Box}} I_{u,\tau}(v,v',s) \, ds \right|  \leq \frac{e^{\tau \theta/2} |u|}{|\Im(v)|^{2N}} e^{ - c \tau |\Im(v)|},
   \]
   where $c > 0$ is some constant, and $c_3$ is the same constant that appears in \Propref{prop:our estimate for the BCFV kernel for the Hadamard bound}.
   \label{prop:integral-of-bcfv-kernel-over-box}
\end{prop}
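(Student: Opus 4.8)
The plan is to evaluate the closed contour integral over $\Cs{v,\Box}$ by residues. Since, as noted in the preceding remark, $\Cs{v,\Box}$ is a closed clockwise loop, Cauchy's theorem gives
\[
  \int_{\Cs{v,\Box}} I_{u,\tau}(v,v',s)\,ds \;=\; -2\pi\I \sum_{s_0}\Res_{s=s_0}I_{u,\tau}(v,v',s),
\]
the sum running over the poles of $I_{u,\tau}(v,v',\cdot)$ inside the loop. The loop bounds the rectangle $\{\theta/8<\Re(s)<R\}\cap\{|\Im(s)|<d\}$, where $R=-\Re(v)+3\theta/4$, so the first step is to identify the enclosed poles. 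The poles of $\Gamma(\theta-v-s)$ sit at $s=\theta-v+k$, $k\ge0$, with real part $\theta-\Re(v)+k>R$ because $\Re(v)\le\zcrit=\theta/2$, hence strictly to the right of the loop; the pole $s=0$ of $1/\sin(\pi s)$ is excluded because the leftward detour only reaches $\Re(s)=\theta/8>0$; and for $d$ small the pole $s=v'-v$ of $1/(v+s-v')$ is excluded, since on the wedge $\Cv{\varphi}$ a small $|\Im(v'-v)|$ forces $v'-v$ close to $0$. Thus the only enclosed singularities are the simple poles of $1/\sin(\pi s)$ at the integers $s=n$ with $\theta/8<n<R$.

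Then I would compute, using $\sin(\pi s)\sim(-1)^n\pi(s-n)$, the residue $\Res_{s=n}I_{u,\tau}=\tfrac{(-1)^n}{\pi}\bigl(\tfrac{\Gamma(v)}{\Gamma(v+n)}\tfrac{\Gamma(\theta-v-n)}{\Gamma(\theta-v)}\bigr)^N\tfrac{e^{\tau(nv+n^2/2)}}{v+n-v'}u^n$, and bound each factor. From the Pochhammer identities $\Gamma(v+n)/\Gamma(v)=v(v+1)\cdots(v+n-1)$ and $\Gamma(\theta-v)/\Gamma(\theta-v-n)=(\theta-v-1)\cdots(\theta-v-n)$, together with the observation that every shifted factor has imaginary part $\pm\Im(v)$ and hence modulus $\ge|\Im(v)|$, one gets $|\Gamma(v)/\Gamma(v+n)|\le|\Im(v)|^{-n}$ and $|\Gamma(\theta-v-n)/\Gamma(\theta-v)|\le|\Im(v)|^{-n}$, so the bracketed factor is at most $|\Im(v)|^{-2nN}$; moreover $|u^n|=|u|^n$, $|e^{\tau(nv+n^2/2)}|=e^{\tau(n\Re(v)+n^2/2)}$, and an elementary geometric estimate on the wedge (using $\cos\varphi>0$) gives $|v+n-v'|\ge n\sin\varphi$ for $v,v'\in\Cv{\varphi}$. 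Parametrizing $\Cv{\varphi}$ so that $\Re(v)=\zcrit-|\Im(v)|\cot\varphi$, one has the crucial simplification $\Re(v)-\theta/2=-|\Im(v)|\cot\varphi$, and (taking $c_3$ large enough that $R\le 2|\Im(v)|\cot\varphi$) the exponent $n\Re(v)+n^2/2$ stays $\le 0$ and $\le\Re(v)+\tfrac12$ throughout $1\le n<R$, i.e.\ it is maximal at $n=1$.

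Putting these together, $|\Res_{s=n}I_{u,\tau}|\le\frac{1}{\pi n\sin\varphi}\bigl(|u|/|\Im(v)|^{2N}\bigr)^n e^{\tau(n\Re(v)+n^2/2)}$, and the hypothesis $|\Im(v)|>(2e^{\tau\theta/2}|u|)^{1/2N}$ is precisely $|u|/|\Im(v)|^{2N}<\tfrac12 e^{-\tau\theta/2}$; combined with $n<R$ and $\Re(v)<0$, this makes $\sum_{n\ge1}\frac1n\bigl(|u|/|\Im(v)|^{2N}\bigr)^{n-1}e^{\tau(n\Re(v)+n^2/2-\Re(v)-1/2)}$ a convergent series bounded by an absolute constant. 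Hence
\[
  \Bigl|\int_{\Cs{v,\Box}}I_{u,\tau}(v,v',s)\,ds\Bigr|\;\le\;\frac{C_\varphi\,|u|}{|\Im(v)|^{2N}}\,e^{\tau(\Re(v)+1/2)}\;=\;\frac{C_\varphi\,e^{\tau/2}\,e^{\tau\theta/2}\,|u|}{|\Im(v)|^{2N}}\,e^{-\tau|\Im(v)|\cot\varphi},
\]
and since $|\Im(v)|$ is large in this regime and $\cot\varphi\ge1$, the prefactor $C_\varphi e^{\tau/2}$ is absorbed into $e^{-c\tau|\Im(v)|}$ for a slightly smaller $c$, which yields the claim (with $c_3$ also absorbing the $\theta$-dependent thresholds above). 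The main obstacle is exactly this residue sum: the loop encloses of order $R\sim|\Im(v)|\cot\varphi$ poles, not $O(1)$, so one must control all of them simultaneously, and the estimate works only because the geometric decay from $|u|/|\Im(v)|^{2N}<1$ beats the $e^{\tau n^2/2}$ growth over the entire range $1\le n<R$ once the negativity of $\Re(v)$ and the cutoff $n<R$ are used; the delicate part is the bookkeeping of which constants depend on $\theta,\tau,N,\varphi$, and in what combinations.
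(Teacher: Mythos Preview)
Your argument is essentially the paper's: evaluate the loop integral as the sum of the sine residues, bound the Gamma ratios by $|\Im(v)|^{-2nN}$ via the Pochhammer products, control the exponential factor $e^{\tau(n\Re(v)+n^2/2)}$ over $1\le n<R$, and sum geometrically using the hypothesis $|u|/|\Im(v)|^{2N}<\tfrac12 e^{-\tau\theta/2}$. Two small bookkeeping slips worth noting: the condition you state, $R\le 2|\Im(v)|\cot\varphi$, is not quite enough to force $n\Re(v)+n^2/2\le\Re(v)+\tfrac12$ on all of $[1,R)$ --- you actually need $R\le -2\Re(v)=2|\Im(v)|\cot\varphi-\theta$, which just means choosing $c_3$ a bit larger --- and the prefactor $C_\varphi$ cannot be absorbed into $e^{-c\tau|\Im(v)|}$ uniformly as $\tau\searrow 0$; however the paper's own proof also ends with an overall constant factor, so this last point is harmless.
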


\begin{prop}[Integral over the $\Cw{R}$ contour]
   For $|\Im(v)| \geq c_3 \theta$,
   \[
       \int_{\Cw{R}} I_{u,\tau}(v,v',s) \,ds \leq e^{C(\theta,N,\tau)} e^{ - c_1 N |\Im(v)|(\log |v| - c_2  \theta)},
   \]
   where $c_1,c_2,c_3$ and $C(\theta,N,\tau)$ are the same constants that appear in~\Propref{prop:our estimate for the BCFV kernel for the Hadamard bound}.
   \label{prop:integral-of-bcfv-kernel-over-vertical-line}
\end{prop}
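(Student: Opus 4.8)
The plan is to parametrize $\Cw{R}$ by $s = R + \I t$, $t\in\R$, with $R = -\Re(v) + 3\theta/4$, and to bound $I_{u,\tau}(v,v',s)$ factor by factor, uniformly in $t$ up to an $e^{-\pi|t|}$ decay that makes the $t$-integral converge. The key structural point is that on $\Cw{R}$ the moving Gamma arguments $v+s$ and $\theta-v-s$ have real parts pinned at $3\theta/4$ and $\theta/4$, both strictly positive, so they stay bounded away from the poles of $\Gamma$; thus two-sided Stirling-type bounds $|\Gamma(x+\I y)| \asymp_x (1+|y|)^{x-1/2} e^{-\pi|y|/2}$ (with the $x$-dependence of the constants tracked) apply along the whole contour with no local difficulty. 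On $\Cv{\pi/4}$ one has $\Re(v) = \zcrit - |\Im(v)|$, so $R = |\Im(v)| + \theta/4$ is comparable to $|\Im(v)|$ once $|\Im(v)| \ge c_3\theta$, while $\Re(v+s) = 3\theta/4$ and $\Re(v+s-v') = 3\theta/4 - \Re(v') \ge \theta/4$.

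I would dispose of the easy factors first. Choosing $R$ at distance at least $\tfrac14$ from $\Z$ (the freedom noted after \Thmref{thm:BCFV formula}, which also keeps $v+\Cw{R}$ off $\Cv{\varphi}$ and off the poles), the identity $|\sin(\pi(R+\I t))|^2 = \sin^2(\pi R) + \sinh^2(\pi t)$ gives $|\sin(\pi s)|^{-1} \le C e^{-\pi|t|}$ for all $t$. The simple pole gives $|v+s-v'|^{-1} \le 4/\theta$, and $|u^s| = |u|^R e^{-t\arg u} \le |u|^R e^{(\pi/2)|t|}$, where $|u|^R = e^{R\log|u|}$ is of lower order in $|\Im(v)|$ than the main decay below and is absorbed into it. For the $\tau$-exponential I would use $sv + \tfrac12 s^2 = \tfrac12(v+s)^2 - \tfrac12 v^2$, so that on $\Cv{\pi/4}$, with $\Re((v+s)^2) = (3\theta/4)^2 - \Im(v+s)^2$ and $\Re(v^2) = \zcrit^2 - 2\zcrit|\Im(v)|$,
\[
\Re\!\left(\tau\big(sv + \tfrac12 s^2\big)\right) \le \tau\Big(C\theta^2 + \theta|\Im(v)| - \tfrac12\Im(v+s)^2\Big),
\]
whose only $t$-dependence is the harmless $-\tfrac\tau2\Im(v+s)^2 \le 0$; the residual $e^{\tau(C\theta^2 + \theta|\Im(v)|)}$ is absorbed as indicated below.

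The main term is $|e^{N\Phi(s)}| = \big|\Gamma(v)\Gamma(\theta-v-s)/[\Gamma(v+s)\Gamma(\theta-v)]\big|^N$. Applying Stirling to each of the four factors (directly to $\Gamma(v)$ and $\Gamma(\theta-v)$ since $|\arg v|, |\arg(\theta-v)| \le 3\pi/4 < \pi$, or via the reflection formula for $\Gamma(v)$ if one prefers), and using that $v$ and $\theta-v$ have real parts $\zcrit - |\Im(v)|$ and $\zcrit + |\Im(v)|$ (differing by $\theta$, both of modulus $\asymp|\Im(v)|$) while $v+s$ and $\theta-v-s$ share the imaginary part modulus $|\Im(v)+t|$, the contributions combine to
\[
\left|\frac{\Gamma(v)}{\Gamma(v+s)}\frac{\Gamma(\theta-v-s)}{\Gamma(\theta-v)}\right|^N \le e^{C(\theta,N)}\,(1+|\Im(v)|)^{-2N|\Im(v)|}\,(1+|\Im(v)+t|)^{-N\theta/2},
\]
so the decay in $|\Im(v)|$ is of order $e^{-2N|\Im(v)|\log|\Im(v)|}$. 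Since $\log|v| = \log|\Im(v)| + O(1)$ on $\Cv{\pi/4}$, this dominates the $O(N|\Im(v)|)$ and $O(N\theta\log|\Im(v)|)$ corrections from the $\tau$-exponential, from $|u|^R$, and from the $\theta$-dependence of the Stirling constants; spending part of the exponent $2N|\Im(v)|\log|v|$ to absorb those corrections leaves $e^{-c_1 N|\Im(v)|(\log|v| - c_2\theta)}$ and a residual prefactor $e^{C(\theta,N,\tau)}$ within the stated bound on $C$. Integrating the surviving $\int_\R e^{-\pi|t|}e^{(\pi/2)|t|}(1+|\Im(v)+t|)^{-N\theta/2}\,dt < \infty$ then yields \Propref{prop:integral-of-bcfv-kernel-over-vertical-line}.

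I expect the only real obstacle to be bookkeeping rather than ideas: tracking how the $\theta$-dependence of the Stirling constants enters the four Gamma factors and checking that the apparent $\theta^\theta$-type factors cancel — between the $\Gamma(v)$--$\Gamma(\theta-v)$ pair, whose arguments differ by $\theta$ in real part, and between the $\Gamma(v+s)$--$\Gamma(\theta-v-s)$ pair — so that nothing worse than $e^{C'(N(\theta + |\log\theta| + \theta^{-1} + 1) + \tau\theta)}$ survives; and verifying that every estimate is uniform in $\tau$, as required for the $\tau\searrow 0$ passage that gives \Thmref{theorem4.1}.
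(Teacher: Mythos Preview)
Your proposal is correct and follows essentially the same route as the paper: Stirling on all four Gamma factors, $|\sin(\pi s)|^{-1} \le Ce^{-\pi|\Im s|}$, a direct bound on $|v+s-v'|^{-1}$, and computation of $\Re(sv + s^2/2)$ (your identity $sv+\tfrac12 s^2 = \tfrac12(v+s)^2 - \tfrac12 v^2$ is a small shortcut over the paper's direct expansion, and the paper works on the already-shifted contour $R = -\Re(v) + \theta/2 + \delta$ rather than $3\theta/4$, but this only changes constants). The one place to be careful is the form of Stirling you quote: the horizontal-strip asymptotic $|\Gamma(x+\I y)| \asymp_x (1+|y|)^{x-1/2}e^{-\pi|y|/2}$ is \emph{not} uniform when $x$ is comparable to $|y|$, which is exactly the situation for $\Gamma(\theta-v)$ since $\Re(\theta-v) = \zcrit + |\Im(v)|$. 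The paper sidesteps this by using the full formula $\log\Gamma(z) = (z-\tfrac12)\log z - z + \tfrac12\log 2\pi + O(|z|^{-1})$ from the outset and organizing the real part term by term; your ``tracking the $x$-dependence of the constants'' would, once carried out, reproduce that same calculation.
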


Before proving the propositions, we complete the proof of Theorem \ref{theorem4.1}.
\begin{proof}[Proof of Theorem \ref{theorem4.1}]
    In \Thmref{thm:BCFV formula}, the vertical contour $\Cw{R}$ is at $R = -\Re(v) + 3\theta/4$. This must be moved to the critical point so that $R = -\Re(v) + \theta/2 + \delta$ for small $\delta$. This is done by using the bound on $I_{u,\tau}(v,v',s)$ in~\Eqref{eq:estimate on integrand of BCFV kernel along vertical line}, and hence we can truncate the vertical contour at large $|\Im(s)|$ and use Cauchy's theorem to move over the vertical contour. 

    Next, we have to take a limit $\tau \searrow 0$ in \Eqref{eq:laplace transform equation in bcfv theorem}. Since $u$ has positive real part, $e^{-u\partfunc}$ is absolutely bounded, and we can take the limit inside the integral by bounded convergence. For the right hand side, we can use the Hadamard inequality argument in~\Secref{sec:estimates along the contours} and the bounds in~\Propref{prop:integral-of-bcfv-kernel-over-box} and \Propref{prop:integral-of-bcfv-kernel-over-vertical-line} to show that it converges to the Fredholm determinant of the pointwise limit of the kernel $\Kbcfv$ as $\tau \searrow 0$. 
\end{proof}

\begin{proof}[Proof of Proposition~\ref{prop:integral-of-bcfv-kernel-over-box}]
    The integral over $\Cs{v,\Box}$ simply collects residues from the poles of $\sin^{-1}(\pi s)$. Then,
\begin{align*}
    \int_{\Cs{v,\Box}} I_{u,\tau}(v,v',s) \, ds
    & = \sum_{i=1}^{q(v)}  \left(\frac{\Gamma(v)\Gamma(\theta - v - i)}{\Gamma(v+i)\Gamma(\theta - v)} \right)^N u^i 
    \frac{e^{\tau(\Re(v) i + i^2/2)}}{|v + i - v'|} (-1)^i \\ 
    & = \sum_{i=1}^{q(v)} \Res_{u,i}(v,v'),
\end{align*}
where $q(v) \leq R$ is the number of zeros of the sine caught inside the sausage (\ref{eq:number of residues and zcrit definition}). 
Since $v,v' \in \Cv{\varphi}$, we have
\begin{equation}
    \begin{aligned}
        \Re(v) & = \frac{\theta}{2} - \cot(\varphi) |\Im(v)|.
    \end{aligned}
    \label{eq:v on the contour Cv}
\end{equation}
Then, we may estimate $q(v)$ as follows:
\begin{align*}
    q(v) \leq R & = -\Re(v) + \frac{\theta}{2} + \delta  = \cot(\varphi) |\Im(v)| + \delta,
\end{align*}
where $0 < \delta \leq \frac{\theta}{4}$. For our bound, the number of residues doesn't matter, and the contribution of the first residue dominates. The ratio of gamma functions in the residues become $\Gamma(v)/\Gamma(v+i) = \prod_{j=0}^{i-1} (v+j)^{-1}$ and $\Gamma(\theta -v - i)/\Gamma(\theta - v) = \prod_{j=1}^{i} (\theta - v-j)^{-1}$. It is clear that $|v + i| \geq |\Im(v)|$ and $|\theta -v - i| \geq |\Im(v)|$. The $|v - v' + i|^{-1}$ term can be bounded above by a constant, since $|\operatorname{arg}(v - v')| > \varphi$. 
Therefore for $|\Im(v)| > (e^{\tau \theta/2}|u|/2)^{{\frac1{2N}}}$,
\begin{equation*}
    \begin{aligned}
        \sum_{i =1}^{q(v)} \left| \Res_{u,i}(v,v') \right|
        & \leq \sum_{i =1}^{q(v)} \frac{1}{|\Im(v)|^{2Ni}} |u|^i e^{i \tau \theta/2} e^{ - \tau i (\cot(\varphi) |\Im(v)| - i/2)}\\
        & \leq \sum_{i =1}^{q(v)} \left(\frac{e^{\tau \theta/2} |u|}{|\Im(v)|^{2N}}\right)^i e^{ - \tau c |\Im(v)| }, \\
        & \leq 2 \frac{e^{\tau \theta/2} |u|}{|\Im(v)|^{2N}} e^{ - \tau c |\Im(v)| },
    \end{aligned}
    \label{eq:estimate for sum of residues when setting tau to 0}
\end{equation*}
where $c$ is a $\varphi$-dependent constant that comes from bounding the $i(\cot(\varphi) |\Im(v)| - i/2)$ term on the interval $1 \leq i \leq \cot(\varphi) |\Im(v)| + \delta$. We choose the constant $c_3$ to ensure that $\cot(\varphi) |\Im(v)| \geq \cot(\varphi) c_3 \theta > \theta/4 \geq \delta$. The same constant $c_3$ appears in \Propref{prop:integral-of-bcfv-kernel-over-vertical-line}.
\end{proof}

\begin{proof}[Proof of ~\Propref{prop:integral-of-bcfv-kernel-over-vertical-line}]
    We will focus first on estimating the product of gamma functions in $I_{u,\tau}(v,v',s)$. For $s \in \Cw{-\Re(v) + R}$,
    \begin{equation}
        \Re(s) = \delta + \cot(\varphi) |\Im(v)|.
        \label{eq:s on the contour Cs}
    \end{equation}
    Stirling's formula holds whenever $\arg(z)$ remains bounded away from $\pm \pi$ \citep[6.1.41]{MR0167642},
    $$
    \log\Gamma(z) =  (z-\tfrac12) \log z - z + \tfrac12 \log 2\pi + \Ord \left(\frac1{|z|} \right),
    $$
    and
    $$
    \Re(\log\Gamma(z)) = -\Im(z) \arg(z) + \Re(z)\left( \log|z| - 1 \right)  - \frac12 \log |z| + \Ord\left(\frac1{|z|} \right).
    $$
This gives
\begin{align}
   \log\left( \frac{\Gamma(v)}{\Gamma(v+s)} \right) + & \log\left( \frac{\Gamma(\theta - v - s)}{\Gamma(\theta - v)} \right)
   = \nonumber \\
   & - \Im(v) \arg(v) + \Im(\theta - v) \arg(\theta - v) \label{eq:ratio-of-gammas-1-1} \\
   & + \Re(v) ( \log |v| - 1 ) - \Re(\theta - v) ( \log|\theta - v| -1 ) \label{eq:ratio-of-gammas-1-3} \\
   & + \Im(v + s) \arg(v + s) - \Im(\theta - v - s) \arg(\theta - v - s) \label{eq:ratio-of-gammas-1-2} \\
   & - \Re(v+s) ( \log |v + s| - 1 ) + \Re(\theta - v -s) ( \log |\theta - v - s| - 1 ) \label{eq:ratio-of-gammas-1-4} \\
   & - \frac{1}{2} \log \left| \frac{ v ( \theta - v -s) }{(v+s)(\theta -v )} \right|  + \Ord(|\theta|^{-1}) \label{eq:ratio-of-gammas-1-5}
\end{align}
since $|v|,|\theta -v|,|\theta - v- s|,|v+s| \geq c\theta$ for some constant $c>0$. Here and in the following, $a = \Ord(b)$ means that there is a constant $C$ such that $|a| \leq C b$ for all $\theta > \theta^*$. We will estimate the numbered terms in the above display one-by-one. 

We first estimate \Eqref{eq:ratio-of-gammas-1-3}: 
\begin{align*}
    \Re(v) ( \log |v| - 1 ) & - \Re(\theta - v) ( \log|\theta - v| -1 )  = \frac{\theta}{2} \log \frac{|v|}{|\theta - v|} - \cot(\varphi) |\Im(v)| \left( \log (|v||\theta -v|) -2 \right).
\end{align*}
Since the ratio inside the logarithm is $\Ord(1)$ for all $v \in \Cv{\varphi}$ we have for some $\varphi$-dependent constant $c$,
\begin{equation}
    \Eqref{eq:ratio-of-gammas-1-3} \leq \Ord(\theta) - c |\Im(v)|\log( |v| |\theta - v|).
    \label{eq:ratio-of-gammas-2-3}
\end{equation}
Thus, the terms in \Eqref{eq:ratio-of-gammas-1-3} dominate the terms in~\Eqref{eq:ratio-of-gammas-1-1} and this gives us the exponential decay in $v$ that we need.

Since $\Im(\theta - v - s) = - \Im(v+s)$,~\Eqref{eq:ratio-of-gammas-1-2}  becomes $\Im(v + s) ( \arg(v + s) + \arg(\theta - v - s) )$. From~\Eqref{eq:v on the contour Cv} and~\Eqref{eq:s on the contour Cs}, we get $\theta/2 + \delta = \Re(v+s) \geq \Re(\theta - (v+s)) = \theta/2 - \delta$. It follows that $\Im(v+s)$ and $\arg(v+s) + \arg(\theta - v -s)$ have opposite signs, and hence
\begin{equation}
    \eqref{eq:ratio-of-gammas-1-2}  \leq \Im(v + s) ( \arg(v + s) + \arg(\theta - v - s) ) \leq 0.
    \label{eq:ratio-of-gammas-2-2}
\end{equation}

For some constant $c > 0$,
\begin{equation}
    \begin{aligned}
        \Eqref{eq:ratio-of-gammas-1-4}
        & = - \frac{\theta}{2} \log \frac{|v + s|}{| \theta - v - s|} - \delta \log |v + s| | \theta - v - s| \\
        & \leq - \frac{\theta}{2} c - \delta \log\left( \frac{\theta}{2} - \delta \right) \left( \frac{\theta}{2} + \delta \right)\\
        & = \Ord(\theta) + \Ord(|\log(\theta)|).
    \end{aligned}
    \label{eq:ratio-of-gammas-2-4}
\end{equation}

The term \eqref{eq:ratio-of-gammas-1-5} is split into two terms: the first term $-\log|v/(\theta - v)|$ is $\Ord(|\log \theta|)$ for small $v$, and $\Ord(1)$ for $|v| \geq c \theta$. The second term $-\log |(\theta - v - s)/(v+s)|$ behaves similarly, and we get
\begin{equation}
    \eqref{eq:ratio-of-gammas-1-5} = \Ord(1) +  \Ord(| \log \theta|).
    \label{eq:ratio-of-gammas-2-5}
\end{equation}

From~\Eqref{eq:v on the contour Cv} and~\Eqref{eq:s on the contour Cs}, we get $|v + s - v'|^{-1} \leq \frac{2}{\theta}$. Finally, to analyze $\exp(\tau(s v + s^2/2))$, we look at the real part of $s v + s^2/2$:
\begin{equation}
    \begin{aligned}
        \Re(s v & + s^2/2)  \\
        & = \Re(s) \Re(v) - \Im(s) \Im(v)  + \frac{\Re(s)^2 - \Im(s)^2}{2}\\
        & = \Re(s) \Re(v) + \frac{\Re(s)^2}{2} + \frac{\Im(v)^2}{2} - \frac{(\Im(s) + \Im(v))^2}{2} \\
        & = (\delta + \cot(\varphi) |\Im(v)|) (\frac{\theta}{2} - \cot(\varphi) |\Im(v)|) + \frac{(\delta + \cot(\varphi)|\Im(v)|)^2}{2} + \frac{\Im(v)^2}{2} - \frac{(\Im(s) + \Im(v))^2}{2}  \\
        & = \frac{\theta \delta + \delta^2}{2} + \cot(\varphi) \frac{\theta}{2} |\Im(v)|  + (1 - \cot(\varphi)^2)\frac{\Im(v)^2}{2} -  \frac{(\Im(s) + \Im(v))^2}{2}  \\
        & \leq C\theta + \cot(\varphi) \frac{\theta}{2} |\Im(v)| ,
    \end{aligned}
    \label{eq:ratio-of-gammas-2-7}
\end{equation}
using $\cot(\varphi) \geq 1$.

Putting~\Eqref{eq:ratio-of-gammas-2-3},~\Eqref{eq:ratio-of-gammas-2-2},~\Eqref{eq:ratio-of-gammas-2-4},~\Eqref{eq:ratio-of-gammas-2-5}, and \Eqref{eq:ratio-of-gammas-2-7} together with 
$|\sin(\pi s)|^{-1} \leq Ce^{-\pi |\Im(s)|}$,  we get
\begin{equation}
    I_{u,\tau}(v,v',s) \leq e^{C(\theta,N,\tau)} e^{ - N |\Im(v)|(\log |v| - c \tau \cot(\varphi) \theta)} e^{-\pi |\Im(s)|}
    \label{eq:estimate on integrand of BCFV kernel along vertical line}
\end{equation}
where
\[
    C(\theta,N,\tau) = \Ord( N (\theta + |\log \theta| + \theta^{-1} + 1) + \tau \theta).
\]
Integrating this over $s$ completes the proof.
\end{proof}

\bibliographystyle{plainnat}
\bibliography{master-bibtex}

\end{document}